\documentclass[11pt]{elsarticle}
\usepackage{fullpage}
\usepackage{multirow}
\usepackage{amsmath}
\usepackage{amssymb}
\usepackage{enumitem}
\usepackage{xfrac}
\usepackage{booktabs}
\usepackage[thmmarks]{ntheorem}
\usepackage{graphicx}
\usepackage{float}
\usepackage{caption}
\usepackage{subcaption}
\usepackage{algorithm}
\usepackage{algpseudocode}
\usepackage{hyperref}
\usepackage{xcolor}
\usepackage{mathtools}
\usepackage{bm}
\usepackage{tabularx}
\usepackage{tikz}
\usepackage{empheq}
\usepackage{adjustbox}

\usepackage{nicefrac}

\usepackage{lmodern}
\usepackage{comment}

\usepackage{siunitx}
\newcounter{problem}[section]

\theoremseparator{.}
\newtheorem{theorem}{Theorem}[section]

\newtheorem{remark}{Remark}[section]

\theorembodyfont{\normalfont}

\theoremstyle{nonumberplain}
\theoremheaderfont{\itshape}
\theorembodyfont{\normalfont}
\theoremseparator{.}
\theoremsymbol{\ensuremath{\square}}
\newtheorem{proof}{Proof}

\numberwithin{equation}{section}

\allowdisplaybreaks

\begin{document}

\begin{frontmatter}

\title{Structure-preserving generalized transferable neural networks for the Cahn-Hilliard equation}

\author[au]{Cao-Kha Doan}\corref{mycorrespondingauthor} \ead{kcd0030@auburn.edu}
\author[au]{Thi-Thao-Phuong Hoang} \ead{tzh0059@auburn.edu}
\cortext[mycorrespondingauthor]{Corresponding author}
\author[usc]{Lili Ju} \ead{ju@math.sc.edu}
\author[usc]{Shuting Wang} \ead{sw156@email.sc.edu}

\address[au]{Department of Mathematics and Statistics, Auburn University, Auburn, AL 36849, USA}
\address[usc]{Department of Mathematics, University of South Carolina, Columbia, SC 29208, USA}

\begin{abstract}
This paper is concerned with a structure-preserving neural network-based framework for the Cahn-Hilliard equation in mixed form. We employ generalized transferable neural networks (GTransNet) for spatial approximation and stabilized backward differentiation formulas (BDF) for temporal discretization. The resulting first- and second-order in time GTransNet-BDF schemes are shown to conserve mass and satisfy energy stability at the time-discrete level. The schemes are implemented by a collocation-based method, in which a least-squares system with constant coefficient matrix needs to be solved at each time step. The solution of this system, which determines the output-layer weights of the network, violates mass conservation due to the expected nonzero least-squares residual. To overcome this issue, we introduce a novel post-processing mass-conserving projection that enforces the mass constraint through a minimization problem, whose solution can be computed at negligible computational cost. A key advantage of the proposed method lies in its predetermined hidden layers and mesh-free nature, making the method applicable to complex domains, variable mobility, and long-time simulations. Extensive numerical experiments in two and three dimensions verify convergence, mass conservation, and energy dissipation as well as demonstrate the accuracy and robustness of the proposed GTransNet-BDF schemes.

\end{abstract}

\begin{keyword}
Cahn-Hilliard equation; Transferable neural networks; Mass-conserving projection; Energy stability; Stabilized BDF schemes; Mesh-free method; Least-squares formulation
\end{keyword}

\end{frontmatter}


\section{Introduction}

The Cahn-Hilliard (CH) equation~\cite{Cahn58} is a fundamental model in materials science and applied mathematics that describes the process of phase separation in binary alloys, a phenomenon known as spinodal decomposition. It also arises in a wide range of applications, including tumor growth modeling~\cite{Cristini10}, image processing~\cite{Bertozzi07}, thin film dynamics~\cite{Thiele01}, and two-phase flow simulations~\cite{Doan25b}. As a fourth-order nonlinear parabolic PDE, the CH equation presents significant computational challenges: its solutions develop thin interfacial layers controlled by a small parameter, and the dynamics involve multiple time scales, from fast initial spinodal decomposition to slow late-stage coarsening driven by the Ostwald ripening mechanism. Classically, the CH equation has been solved using a variety of mesh-based numerical methods, including finite difference~\cite{Furihata01,Ju15}, finite element~\cite{Elliott87,DuNicolaides91}, spectral and pseudo-spectral~\cite{Chen98,Zhu99,Cheng16}, and finite volume~\cite{Cueto08} methods. Due to the sharp interfacial structures, these methods typically require very fine meshes near the interface to accurately resolve the steep gradients, and therefore often rely on adaptive mesh refinement (AMR) strategies~\cite{Ceniceros07,Wise08}. While AMR significantly reduces the computational cost compared to uniform fine meshes, it introduces substantial implementation complexity, particularly for problems in three dimensions or on complex domains, and requires sophisticated mesh tracking and re-meshing as interfaces evolve over time.

For the time integration of the CH equation, semi-implicit schemes based on backward differentiation formulas (BDF) have become a standard approach~\cite{Shen10,Wang18,Li16}. The key idea is to treat the stiff linear fourth-order term implicitly for stability while evaluating the nonlinear term explicitly to avoid solving nonlinear systems at each time step. A stabilization technique is commonly employed, in which a linear term is added to the implicit part and subtracted from the explicit nonlinear part, yielding energy-stable schemes with appropriate choices of the stabilization constant~\cite{Xu06,Wang18}. In addition, the scalar auxiliary variable (SAV)~\cite{Shen18SAV} and Lagrange multiplier~\cite{Cheng20,Hou23} approaches, along with their variants, have provided a systematic framework for constructing unconditionally energy-stable schemes for gradient flow models.

In recent years, deep learning has emerged as a powerful tool for scientific computing, and neural network-based PDE solvers have attracted intense research interest. Physics-informed neural networks (PINNs)~\cite{Raissi19} embed the governing equations and boundary conditions directly into the loss function and train the network via stochastic gradient descent (SGD). While PINNs offer the attractive feature of being mesh-free, their application to the CH equation has proven particularly challenging. It was demonstrated in~\cite{Wight21} that a direct application of standard PINNs to the Allen-Cahn and CH equations does not provide accurate solutions in many cases, primarily due to sharp transition layers that evolve over time and the inability of fixed collocation points to adapt to these moving features. Several remedies have been proposed, including sequential time-marching strategies~\cite{Mattey22}, adaptive collocation point resampling~\cite{Wight21}, neural tangent kernel (NTK) based loss weighting~\cite{Chen25PFPINN}, and mass-preserving constraints~\cite{Huang24}. However, these approaches often rely on iterative SGD-based optimization of deep neural networks, which is inherently expensive and can suffer from convergence difficulties, local minima, and hyperparameter sensitivity.

An alternative paradigm that has gained significant attention is the class of shallow (single-hidden-layer) neural network methods with predetermined hidden-layer parameters. Representative methods in this class include the Extreme Learning Machine (ELM)~\cite{Huang06,Dong21,DongLi21}, the Random Feature Method (RFM)~\cite{Chen22RFM}, and the Transferable Neural Network (TransNet)~\cite{Zhang24}. In these approaches, the weights and biases of the hidden-layer neurons are fixed in advance -- either by random sampling from prescribed distributions or by deliberate geometric construction -- and only the weights of the output layer are optimized, typically by solving a linear least-squares problem. This strategy completely eliminates the need for SGD training, resulting in orders-of-magnitude speedups over deep-network-based solvers. Among these methods, TransNet stands out by employing a geometrically interpretable construction that produces uniformly distributed partition hyperplanes in the hidden layer, ensuring balanced expressive power across the computational domain~\cite{Zhang24}. The method has been successfully applied to various steady-state~\cite{Zhang24,Cheng26} and time-dependent PDE problems~\cite{ZhangBDF24}, as well as interface problems via domain decomposition~\cite{Lu25} and singularly perturbed problems via matched asymptotic expansions~\cite{Shen26MAE}.

Recently, the Generalized Transferable Neural Network (GTransNet) was proposed in~\cite{Cheng26} to address the limitation of TransNet in handling problems with highly oscillatory solutions. GTransNet augments the original TransNet with additional hidden layers while preserving its predetermined feature-generation mechanism. The first hidden layer retains TransNet's uniform partition hyperplane construction with a symmetric bias distribution, while the subsequent hidden layers employ a variance-controlled weight sampling strategy that prevents neuron activations from saturating, thereby significantly enhancing the network's expressive capacity for capturing high-frequency and multiscale features. The GTransNet framework has demonstrated superior performance over TransNet for a broad class of steady-state PDEs, including the Poisson, Helmholtz, multiscale elliptic, Navier-Stokes, and Allen-Cahn equations~\cite{Cheng26}. However, its application to time-dependent fourth-order problems remains unexplored.

In this paper, we develop the so-called GTransNet-BDF method for solving the CH equation. To avoid numerical sensitivity associated with the biharmonic operator, we adopt a mixed formulation that reduces the CH equation to a coupled system of two second-order equations and keeps the derivative computations manageable. The proposed framework employs the GTransNet basis for spatial approximation and stabilized BDF method for time integration, resulting in the first- and second-order in time GTransNet-BDF schemes. In contrast to many existing TransNet-based approaches that treat the time variable as an extra spatial dimension, this framework provides more flexibility in time discretization and is well suited to long-time simulations. To the best of our knowledge, this is the first application of transferable neural networks to a fourth-order PDE. The main contributions and features of our work are summarized as follows:

\begin{itemize}[leftmargin=*]
\item \textbf{Mesh-free solver for the CH equation with predetermined network parameters.} The GTransNet hidden-layer parameters are fixed a priori based on geometric principles (uniformly distributed partition hyperplanes and variance-controlled weight sampling), and only the output-layer weights are computed by solving a linear least-squares system at each time step. This eliminates the need for mesh generation, adaptive mesh refinement, interface tracking, and iterative SGD optimization, resulting in a simple and efficient computational framework.

\item \textbf{Mass conservation and energy stability.}
Two intrinsic properties of the CH equation, namely mass conservation
and energy dissipation, are not automatically satisfied by existing neural-network solvers. Our framework preserves both properties at the discrete level. Energy stability is achieved through the standard stabilization technique combined with semi-implicit BDF time integration. Discrete mass conservation, which the over-determined least-squares system satisfies only approximately, is enforced exactly through a projection step that corrects the least-squares solution along a single precomputed direction, with minimal perturbation to the PDE residual and negligible additional cost.

\item \textbf{Efficient time marching with a one-time QR factorization.} Since the coefficient matrix of the least-squares system is time-independent, a single QR factorization computed at the start of the simulation suffices for the entire time-stepping process, and each subsequent time step is solved efficiently via a matrix-vector product followed by back substitution.

\item \textbf{Adaptability to complex domain geometries.} The mesh-free nature of the GTransNet-BDF method, combined with the domain-covering ball construction for generating hidden-layer neurons, allows the method to be straightforwardly applied to problems on irregular or complex domains without the need for specialized mesh generators.
\end{itemize}

The remainder of this paper is organized as follows. In Section~\ref{sec:GTransNet}, we provide a brief review of TransNet for solving PDEs and present the generalized version (GTransNet) along with its theoretical properties. The combination of GTransNet and BDF time-stepping method for the CH equation is proposed in Section~\ref{sec:GTransNet_BDF}, together with its efficient implementation based on least-squares formulation. Section~\ref{sec:properties} establishes mass conservation and energy stability of the proposed GTransNet-BDF schemes at the space-continuous level, while a mass-conserving projection that accounts for the least-squares residual is described in Section~\ref{sec:projection}. Numerical experiments on convergence test, shape relaxation, and coarsening dynamics in two and three dimensions are reported in Section~\ref{sec:num}. Finally, some concluding remarks are given in Section~\ref{sec:conclusion}.

\section{Generalized transferable neural networks}\label{sec:GTransNet}
\subsection{TransNet}
For simplicity of presentation, we describe TransNet on the unit ball $B_1(\bm 0)$; the general case follows by an affine shift (see Remark~\ref{rem:affine_shift} below). The idea of transferable neural networks~\cite{Zhang24} is to fix the hidden layer parameters in advance, without relying on any PDE-specific information, and to optimize only the output weights. Using the $\tanh$ activation function, the approximate solution $u_{\mathrm{NN}}$ to a time-dependent PDE takes the following form:
\begin{align}\label{uNN}
\begin{aligned}
u_{\mathrm{NN}}(\bm x,t)&=\sum_{m=1}^N\alpha_m(t)\,\tanh(\bm w_m^T\bm x+b_m)=\sum_{m=1}^N\alpha_m(t)\,\tanh(\gamma_m(\bm a_m^T\bm x+r_m)),
\end{aligned}
\end{align}
where $N$ denotes the number of hidden neurons, $\bm w_m$ and $b_m$ are the weight and bias of the $m$-th hidden neuron, respectively, and $\alpha_1,\alpha_2,\ldots,\alpha_N$ are the time-dependent weights of the output layer. Note that the decomposition of $(\bm w_m,b_m)$ into the location parameter $(\bm a_m,r_m)$ with $\|\bm a_m\|_2=1$ and the shape parameter $\gamma_m>0$ in~\eqref{uNN} is based on the following relations:
\begin{align*}
\gamma_m=\|\bm w_m\|_2,\quad \bm a_m=\frac{\bm w_m}{\gamma_m},\quad r_m=\frac{b_m}{\gamma_m},\quad 1\le m\le N.
\end{align*}
Geometrically, $\bm a_m$ represents the normal direction of the \textit{partition hyperplane}, defined by $\bm a_m^T\bm x+r_m=0$, while $r_m$ denotes its distance from the origin. The shape parameter $\gamma_m$ determines the steepness of the pre-activation value $\bm w_m^T\bm x+b_m$ along the normal direction $\bm a_m$. We refer to~\cite{Zhang24} for a geometric visualization of these parameters.

To measure the density of hidden neurons within a specific region, we recall the partition hyperplane density function $D_N^{\tau}(\bm x)$, defined as
\begin{align*}
D_N^{\tau}(\bm x)=\frac{1}{N}\sum_{m=1}^N\chi_{\{d_m(\bm x)<\tau\}}(\bm x),\quad\text{where }\;d_m(\bm x)=|\bm a_m^T\bm x+r_m|.
\end{align*}
Here $d_m(\bm x)$ measures the distance from $\bm x$ to the $m$-th partition hyperplane, and $\chi$ denotes the standard indicator function. The following theorem (cf.~\cite{Zhang24,Lu25}) shows that uniform sampling of the location parameters distributes the partition hyperplanes evenly throughout the domain, ensuring that the basis functions provide balanced resolution everywhere.
\begin{theorem}\label{thm:E}
If $\{\bm a_m\}_{m=1}^N$ are i.i.d. and uniformly distributed on the $d$-dimensional unit sphere, and $\{r_m\}_{m=1}^N$ are i.i.d. and uniformly distributed in $[0,1]$, then for any $\tau\in (0,1)$,
\begin{align*}
    \mathbb E\left[D_N^{\tau}(\bm x)\right]=\tau,
\end{align*}
for all $\bm x\in \mathbb R^d$ such that $\|\bm x\|_2\le 1-\tau$.
\end{theorem}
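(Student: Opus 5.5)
The plan is to reduce the expectation to a single-neuron probability by linearity, and then to evaluate that probability by conditioning on the projection $s=\bm a^T\bm x$ and exploiting the symmetry of its distribution.

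\textbf{Reduction to one neuron.} Since the pairs $(\bm a_m,r_m)$ are identically distributed, linearity of expectation gives
\begin{align*}
\mathbb E[D_N^\tau(\bm x)]=\frac1N\sum_{m=1}^N \mathbb P\big(|\bm a_m^T\bm x+r_m|<\tau\big)=\mathbb P\big(|\bm a^T\bm x+r|<\tau\big),
\end{align*}
where $\bm a$ is uniform on the sphere and $r\sim U[0,1]$ is independent of $\bm a$. So it suffices to compute this single probability.

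\textbf{Conditioning on $s$.} Set $s=\bm a^T\bm x$. By Cauchy--Schwarz, $|s|\le\|\bm x\|_2\le 1-\tau$. Because $\bm a$ and $-\bm a$ have the same law, $s$ is symmetric, i.e. $s\overset{d}{=}-s$. Conditioning on $s$ and using independence, we get $\mathbb P(|s+r|<\tau)=\mathbb E[g(s)]$, where $g(s)$ is the length of the set $(-\tau-s,\tau-s)\cap[0,1]$.

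\textbf{Computing $g$ on $[-(1-\tau),1-\tau]$.} The right endpoint satisfies $\tau-s\le \tau+(1-\tau)=1$, so the constraint $r\le 1$ never binds. This is exactly where the hypothesis $\|\bm x\|_2\le 1-\tau$ is used. Only the lower endpoint $0$ matters, which gives three cases:
\begin{itemize}
\item for $s<-\tau$, the whole interval lies in $[0,1]$ and $g(s)=2\tau$;
\item for $-\tau\le s\le\tau$, the interval is cut at $0$ and $g(s)=\tau-s$;
\item for $s>\tau$, the interval lies entirely below $0$ and $g(s)=0$.
\end{itemize}
The third case, where the interval is empty, is the one that must not be overlooked.

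\textbf{Symmetrization.} Checking each case gives $g(s)+g(-s)=2\tau$ for all $|s|\le 1-\tau$:
\begin{itemize}
\item for $|s|\le\tau$, $(\tau-s)+(\tau+s)=2\tau$;
\item for $|s|>\tau$, one term is $0$ and the other is $2\tau$.
\end{itemize}
By the symmetry of $s$,
\begin{align*}
\mathbb E[g(s)]=\tfrac12\,\mathbb E\big[g(s)+g(-s)\big]=\tau,
\end{align*}
which proves the claim.

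\textbf{Main obstacle.} The main care point is the case analysis for $g$: getting the truncation by $[0,1]$ right in each regime, and verifying that the upper cutoff $r\le1$ is inactive precisely because $\|\bm x\|_2\le1-\tau$. Once $g$ is correct, the symmetrization step is immediate.
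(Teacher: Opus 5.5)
Your proof is correct and complete. The paper gives no proof of this theorem; it quotes the result from the earlier TransNet works. So there is no in-paper argument to compare against, and your argument (conditioning on $s=\bm a^T\bm x$, then symmetrizing over $\pm\bm a$) is a valid self-contained proof.

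Three small remarks.

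\textbf{A shortcut for $g$.} You can bypass the three-case computation. Write $I=(s-\tau,s+\tau)$. The reflection $y\mapsto -y$ gives $g(s)=|I\cap[-1,0]|$ and $g(-s)=|I\cap[0,1]|$. Hence $g(s)+g(-s)=|I\cap[-1,1]|=|I|=2\tau$ exactly when $I\subset[-1,1]$, i.e. when $|s|\le 1-\tau$. This shows directly that the hypothesis $\|\bm x\|_2\le 1-\tau$ is what keeps the $\tau$-neighborhood of $s$ inside the range of offsets.

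\textbf{A slightly more general statement.} Your argument uses only three facts: $\|\bm a\|_2=1$, the antipodal symmetry $\bm a\overset{d}{=}-\bm a$, and independence of $\bm a$ and $r$. It therefore proves the identity for any antipodally symmetric law of the directions on the unit sphere. Uniformity is not needed for this particular statement.

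\textbf{Independence is a real hypothesis.} You assume $\bm a_m$ and $r_m$ are independent. The theorem leaves this implicit, but it is genuinely required. In $d=1$, couple $a_m=1$ exactly when $r_m<1/2$. Both marginals are preserved, yet $\mathbb P(|a_m x+r_m|<0.1)=0.2$ at $x=0.9$, not $0.1$.
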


Once the location parameters $\{(\bm a_m,r_m)\}_{m=1}^N$ are determined as in Theorem~\ref{thm:E}, we use Gaussian random fields~\cite{Zhang24} to tune the shape parameters $\{\gamma_m\}_{m=1}^N$, which are assumed to be uniform (i.e., $\gamma=\gamma_1=\ldots=\gamma_N$). The \textit{neural feature space} is then defined by
\begin{align*}
\begin{aligned}
\mathcal P_{\mathrm{NN}}&=\mathrm{span}\left\{\tanh(\gamma(\bm a_1^T\bm x+r_1)),\ldots,\tanh(\gamma(\bm a_N^T\bm x+r_N))\right\}\\
&=:\mathrm{span}\left\{\psi_1(\bm x),\dots,\psi_N(\bm x)\right\},
\end{aligned}
\end{align*}
where $\psi_m(\bm x)=\tanh(\gamma(\bm a_m^T\bm x+r_m))$ for $1\le m\le N$.

\begin{remark}\label{rem:affine_shift}
For a general domain $\Omega\subset B_R(\bm x_c)$, the $m$-th hidden neuron is defined as
$$\psi_m(\bm x)=\tanh(\gamma(\bm a_m^T(\bm x-\bm x_c)+Rr_m)),\quad 1\le m\le N,$$
where $\{\bm a_m\}_{m=1}^N$ and $\{r_m\}_{m=1}^N$ are sampled as in Theorem~\ref{thm:E}. Equivalently, the TransNet approximation can be written compactly as
\begin{align*}
\begin{cases}
\bm\psi(\bm x)=\tanh(\gamma(\bm{\mathcal A}(\bm x-\bm x_c)+R\bm r)),\\
u_{\mathrm{NN}}(\bm x,t)=\bm\alpha(t)^T\bm\psi(\bm x),
\end{cases}
\end{align*}
where $\bm{\mathcal A}=[\bm a_1,\bm a_2,\ldots,\bm a_N]^T\in\mathbb R^{N\times d}$, $\bm r=(r_1,r_2,\ldots,r_N)^T\in\mathbb R^N$, $\bm\psi=(\psi_1,\psi_2,\ldots,\psi_{N})^T\in\mathbb R^{N}$, and $\bm\alpha=(\alpha_1,\alpha_2,\ldots,\alpha_N)^T\in\mathbb R^N$.
\end{remark}

\subsection{Generalized TransNet (GTransNet)}

Although the original TransNet performs well for problems with relatively smooth solutions, its accuracy may degrade when dealing with problems exhibiting sharp interfaces, steep gradients, or high-frequency features. As discussed in~\cite{Cheng26}, when the shape parameter $\gamma$ increases, the activation values of the single hidden layer tend to cluster in the saturation regions of the $\tanh$ function, thereby limiting the network's expressive capacity. To address this limitation, we adopt the generalized TransNet (GTransNet) framework~\cite{Cheng26} to augment the original TransNet with additional hidden layers while preserving the interpretable feature-generation mechanism.

\subsubsection{Network architecture}
Let $N_l$ denote the number of neurons in the $l$-th hidden layer. The GTransNet with $L\ge 2$ hidden layers takes the following form
\begin{align}\label{GTransNet}
\begin{cases}
\bm\psi_1(\bm x)=\tanh(\gamma(\bm{\mathcal A}(\bm x-\bm x_c)+R\bm r)),\\
\bm\psi_l(\bm x)=\tanh(\bm W_l\bm\psi_{l-1}(\bm x)),\quad l=2,\ldots,L,\\
u_{\mathrm{NN}}(\bm x,t)=\bm\alpha(t)^T\bm\psi_L(\bm x),
\end{cases}
\end{align}
where $\bm{\mathcal A}$ and $\bm r$ are defined as in Remark~\ref{rem:affine_shift} with $N$ replaced by $N_1$, $\bm\psi_l=(\psi_1^{(l)},\ldots,\psi_{N_l}^{(l)})^T\in\mathbb R^{N_l}$ for $1\le l\le L$, $\bm W_l=(W_{ij}^{(l)})\in\mathbb R^{N_l\times N_{l-1}}$ for $2\le l\le L$, and $\bm\alpha=(\alpha_1,\ldots,\alpha_{N_L})^T\in\mathbb R^{N_L}$ collects the time-dependent output-layer weights. Note that no bias terms are used in the second and subsequent hidden layers.

The first hidden layer of GTransNet follows the same sampling strategy as in TransNet for the direction vectors $\{\bm a_m\}_{m=1}^{N_1}$ (cf.~Theorem~\ref{thm:E}), but the neuron biases are drawn from a centrally symmetric distribution, i.e.,
\begin{align}\label{rm_sym}
r_m\sim\mathcal U[-1,1],\quad m=1,\ldots,N_1,
\end{align}
instead of $r_m\sim\mathcal U[0,1]$ in the original TransNet. This symmetric choice makes the first-layer activations zero-mean (cf.~Theorem~\ref{lem:zero_mean}), which the one-sided distribution of TransNet does not, and this property propagates through the deeper hidden layers (cf.~Theorem~\ref{thm:zero_mean}). For the subsequent hidden layers, the weight matrices $\{\bm W_l\}_{l=2}^L$ are predetermined through a variance-controlled sampling strategy inspired by the Xavier initialization~\cite{Glorot10}:
\begin{align*}
W_{ij}^{(l)}\sim\mathcal N\left(0,\sigma_l^2\right)\quad\text{with}\quad \sigma_l=\sqrt{\frac{\delta}{N_{l-1}}},
\end{align*}
where $0<\delta\le 1$ is a variance control parameter.

\subsubsection{Theoretical properties}

\begin{theorem}[Zero mean of the first hidden layer]\label{lem:zero_mean}
For any fixed $\bm x\in\mathbb R^d$, the random variable $\psi_m^{(1)}(\bm x)$ defined by~\eqref{GTransNet} with the sampling~\eqref{rm_sym} satisfies
$$\psi_m^{(1)}(\bm x)\overset{d}{=}-\psi_m^{(1)}(\bm x),\quad m=1,\ldots,N_1,$$
where $\overset{d}{=}$ denotes equality in distribution. Consequently, $\mathbb E[\bm\psi_1(\bm x)]=\bm 0$.
\end{theorem}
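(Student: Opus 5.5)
The plan is to exhibit a measure-preserving sign flip of the random parameters that negates the neuron. Fix $\bm x\in\mathbb R^d$ and $m$. By~\eqref{GTransNet}, $\psi_m^{(1)}(\bm x)=\tanh\bigl(\gamma(\bm a_m^T(\bm x-\bm x_c)+Rr_m)\bigr)$ is a deterministic function of the random pair $(\bm a_m,r_m)$, since $\gamma$, $R$ and $\bm x_c$ are fixed. Write $\psi_m^{(1)}(\bm x)=F(\bm a_m,r_m)$ with $F(\bm a,r)=\tanh(\gamma(\bm a^T(\bm x-\bm x_c)+Rr))$. Because $\tanh$ is odd, $F(-\bm a,-r)=-F(\bm a,r)$.

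The key step is to show that $(-\bm a_m,-r_m)\overset{d}{=}(\bm a_m,r_m)$. First, $\bm a_m$ is uniform on the unit sphere, whose law is invariant under the orthogonal map $\bm a\mapsto-\bm a$; hence $-\bm a_m\overset{d}{=}\bm a_m$. Second, $r_m\sim\mathcal U[-1,1]$ by~\eqref{rm_sym}, and this law is symmetric about $0$; hence $-r_m\overset{d}{=}r_m$. This is exactly where the symmetric bias distribution is needed: with $\mathcal U[0,1]$ this step would fail. Third, $\bm a_m$ and $r_m$ are independent, so the joint law is the product measure, and the product of the two invariant marginals is invariant under the joint flip.

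It follows that
$$-\psi_m^{(1)}(\bm x)=F(-\bm a_m,-r_m)\overset{d}{=}F(\bm a_m,r_m)=\psi_m^{(1)}(\bm x).$$

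For the expectation, note that $|\psi_m^{(1)}|\le1$, so the mean exists. Equality in distribution gives $\mathbb E[\psi_m^{(1)}(\bm x)]=-\mathbb E[\psi_m^{(1)}(\bm x)]$, and therefore the mean is $0$. Applying this componentwise for $m=1,\ldots,N_1$ gives $\mathbb E[\bm\psi_1(\bm x)]=\bm 0$.

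The only step needing care is the joint invariance. I would state it explicitly through the product-measure argument above rather than rely on the two marginal symmetries separately, since marginal symmetry alone does not imply that the joint flip preserves the law.
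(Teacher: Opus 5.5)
Your proof is correct; the paper states this theorem without proof and defers to \cite{Cheng26}. The underlying reason is the same sign-flip symmetry you use: $(-\bm a_m,-r_m)\overset{d}{=}(\bm a_m,r_m)$ follows from antipodal invariance on the sphere, the symmetry of $\mathcal U[-1,1]$, and independence of $\bm a_m$ and $r_m$; oddness of $\tanh$ and boundedness then give the zero mean. You were right to state that independence explicitly, since the paper's sampling description leaves it implicit and marginal symmetry alone would not suffice.
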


\begin{theorem}[Zero mean propagation]\label{thm:zero_mean}
The means of the neuron pre-activations and activations in the subsequent hidden layers of GTransNet~\eqref{GTransNet} are all zeros, i.e.,
$$\mathbb E[\bm W_l\bm\psi_{l-1}]=\bm 0\quad\text{and}\quad \mathbb E[\bm\psi_l]=\bm 0,\quad l=2,\ldots,L.$$
\end{theorem}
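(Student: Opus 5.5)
We argue by induction on $l$, with Theorem~\ref{lem:zero_mean} as the base case. The natural framework is that all hidden-layer parameters are mutually independent: $\{(\bm a_m,r_m)\}$ are sampled for the first layer, and each $\bm W_l$ is sampled independently of these and of $\bm W_2,\ldots,\bm W_{l-1}$. Consequently $\bm\psi_{l-1}(\bm x)$, which depends only on the first $l-1$ layers' parameters, is independent of $\bm W_l$. We fix $\bm x$ throughout.

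\textbf{Step 1 (pre-activation mean).} For each $i$,
\[
(\bm W_l\bm\psi_{l-1})_i=\sum_{j=1}^{N_{l-1}} W^{(l)}_{ij}\psi^{(l-1)}_j .
\]
Since $\tanh$ is bounded by $1$, every $\psi^{(l-1)}_j$ is bounded, so all expectations exist. By independence,
\[
\mathbb E\bigl[W^{(l)}_{ij}\psi^{(l-1)}_j\bigr]=\mathbb E\bigl[W^{(l)}_{ij}\bigr]\,\mathbb E\bigl[\psi^{(l-1)}_j\bigr]=0,
\]
because $W^{(l)}_{ij}\sim\mathcal N(0,\sigma_l^2)$. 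Thus $\mathbb E[\bm W_l\bm\psi_{l-1}]=\bm 0$. This step does not even require the induction hypothesis.

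\textbf{Step 2 (activation mean).} Since $\tanh$ is nonlinear, a zero mean of the pre-activation does not by itself give a zero mean of the activation. We therefore use a symmetry argument instead. The matrix $\bm W_l$ is symmetric in distribution, $\bm W_l\overset{d}{=}-\bm W_l$, and it is independent of $\bm\psi_{l-1}$. Hence the pair $(\bm W_l,\bm\psi_{l-1})$ has the same joint law as $(-\bm W_l,\bm\psi_{l-1})$. It follows that
\[
\bm W_l\bm\psi_{l-1}\overset{d}{=}-\bm W_l\bm\psi_{l-1}.
\]
Because $\tanh$ is odd, this gives $\bm\psi_l\overset{d}{=}-\bm\psi_l$, and boundedness then yields $\mathbb E[\bm\psi_l]=\bm 0$.

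This argument in fact establishes the distributional symmetry at every layer $l\ge2$ directly from the symmetry of $\bm W_l$, so Theorem~\ref{lem:zero_mean} is not strictly needed. It is still natural to present the result as an induction that propagates the statement $\bm\psi_l\overset{d}{=}-\bm\psi_l$, which is stronger than the zero-mean property, from $l-1$ to $l$, with Theorem~\ref{lem:zero_mean} as the base case. An alternative route is to condition on $\bm\psi_{l-1}$: given $\bm\psi_{l-1}$, each entry $(\bm W_l\bm\psi_{l-1})_i$ is a centered Gaussian, so its $\tanh$ has conditional mean zero, and the tower property gives $\mathbb E[\bm\psi_l]=\bm 0$.

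\textbf{Main obstacle.} The only delicate point is stating the independence assumptions precisely, since the paper only describes the sampling procedure and does not list them explicitly. Once independence is in place, the joint law of $(\bm W_l,\bm\psi_{l-1})$ factorizes, and the sign flip of $\bm W_l$ together with the oddness of $\tanh$ completes the argument.
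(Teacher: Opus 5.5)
The paper does not prove this theorem. Theorems~\ref{lem:zero_mean}--\ref{thm:var} are quoted from the GTransNet reference. The only hint of the intended argument is the remark after \eqref{rm_sym} that the zero mean of the first layer ``propagates through the deeper hidden layers,'' which suggests an induction anchored at Theorem~\ref{lem:zero_mean}.

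Your proof is correct. Step~1 needs only independence and $\mathbb E[W^{(l)}_{ij}]=0$. Both of your routes for the activations are sound once the independence of $\bm W_l$ from the earlier layers is made explicit, as you do. These are the sign flip $(\bm W_l,\bm\psi_{l-1})\overset{d}{=}(-\bm W_l,\bm\psi_{l-1})$ followed by the odd map $\tanh$, and conditioning on $\bm\psi_{l-1}$ so that each pre-activation is a centered Gaussian.

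Your argument is more general than the propagation framing. Since the zero mean comes from the symmetry of $\bm W_l$ alone, both identities hold at every layer $l\ge 2$ even with TransNet's one-sided biases $r_m\sim\mathcal U[0,1]$. So the symmetric choice \eqref{rm_sym} is not what this theorem relies on, and you can drop the induction entirely. Presenting it as an induction on $\bm\psi_l\overset{d}{=}-\bm\psi_l$ only gives a uniform-looking statement across all $l\ge 1$; the base case plays no logical role.

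Theorem~\ref{lem:zero_mean} does matter for Theorem~\ref{thm:var}. There one has $\mathrm{Var}[(\bm W_2\bm\psi_1)_i]=\sigma_2^2\sum_j\mathbb E[(\psi_j^{(1)})^2]$. These second moments equal the variances (bounded by $\sigma_0^2$) precisely because $\bm\psi_1$ has zero mean. For $l\ge 3$, the needed zero mean of $\bm\psi_{l-1}$ is already supplied by your argument.
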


\begin{theorem}[Controlled variance propagation]\label{thm:var}
Assume that each component of $\bm\psi_1$ has a variance at most $\sigma_0^2$. Then the variances of the neuron activations in the subsequent hidden layers of GTransNet~\eqref{GTransNet} satisfy
$$\mathrm{Var}[\psi_i^{(l)}]\le \delta^{l-1}\sigma_0^2,\quad i=1,\ldots,N_l,\quad l=2,\ldots,L.$$
\end{theorem}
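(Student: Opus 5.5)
The argument is an induction on the layer index $l$. The key facts are that $\tanh$ is $1$-Lipschitz with $\tanh(0)=0$, so $|\tanh(z)|\le|z|$, and that the weights $\bm W_l$ are independent of the previous activations $\bm\psi_{l-1}$. Independence holds because $\bm W_l$ is sampled independently of $\bm{\mathcal A}$, $\bm r$ and $\bm W_2,\dots,\bm W_{l-1}$. By Theorem~\ref{thm:zero_mean} all activations have zero mean. Hence $\mathrm{Var}[\psi_i^{(l)}]=\mathbb E[(\psi_i^{(l)})^2]$ for $l\ge 2$, and for $l=1$ we have $\mathbb E[(\psi_j^{(1)})^2]=\mathrm{Var}[\psi_j^{(1)}]\le\sigma_0^2$. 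It therefore suffices to bound second moments, and we prove by induction that $\mathbb E[(\psi_i^{(l)})^2]\le\delta^{l-1}\sigma_0^2$ for all $i$. The base case $l=1$ is the hypothesis.

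For the inductive step, fix $l\ge2$ and let $z_i=\sum_{j=1}^{N_{l-1}}W^{(l)}_{ij}\psi^{(l-1)}_j$ be the pre-activation. Since $|\tanh(z_i)|\le|z_i|$, we get $\mathbb E[(\psi_i^{(l)})^2]\le\mathbb E[z_i^2]$. Expanding the square gives
\begin{align*}
\mathbb E[z_i^2]=\sum_{j,k}\mathbb E\bigl[W^{(l)}_{ij}W^{(l)}_{ik}\bigr]\,\mathbb E\bigl[\psi^{(l-1)}_j\psi^{(l-1)}_k\bigr],
\end{align*}
where the factorization uses the independence of $\bm W_l$ and $\bm\psi_{l-1}$. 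The entries $W^{(l)}_{ij}$ are i.i.d. $\mathcal N(0,\sigma_l^2)$, so $\mathbb E[W^{(l)}_{ij}W^{(l)}_{ik}]=\sigma_l^2\delta_{jk}$ and all cross terms vanish. Without this, correlations among the $\psi^{(l-1)}_j$ would matter. We are left with
\begin{align*}
\mathbb E[z_i^2]=\sigma_l^2\sum_{j=1}^{N_{l-1}}\mathbb E\bigl[(\psi_j^{(l-1)})^2\bigr]\le\frac{\delta}{N_{l-1}}\cdot N_{l-1}\,\delta^{l-2}\sigma_0^2=\delta^{l-1}\sigma_0^2 .
\end{align*}
This closes the induction.

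The main point needing care is the independence structure. We must state clearly that randomness is taken jointly over all network parameters, with $\bm x$ fixed, and that $\bm\psi_{l-1}$ is a measurable function of $(\bm{\mathcal A},\bm r,\bm W_2,\dots,\bm W_{l-1})$ only. Otherwise the argument is routine. We also use that all activations are bounded by $1$, so every moment involved is finite.
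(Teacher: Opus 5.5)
Your proof is correct. The paper itself gives no proof of this theorem: it is stated without proof alongside the other GTransNet properties taken from \cite{Cheng26}, so there is no in-paper argument to compare against. Your induction on second moments is the standard Xavier-type variance computation one would expect.

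Two small remarks.

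First, the zero-mean fact your base case actually needs is that of the first layer. This is Theorem~\ref{lem:zero_mean}, a consequence of the symmetric bias sampling \eqref{rm_sym}, not Theorem~\ref{thm:zero_mean}. That ingredient is indispensable, because the variance hypothesis alone does not give the bound. For instance, if every $\psi^{(1)}_j$ were a nonzero constant $c$, then $\sigma_0=0$, but $z_i\sim\mathcal N(0,\delta c^2)$ and $\mathrm{Var}[\psi^{(2)}_i]>0$. By contrast, zero mean at layers $l\ge 2$ is not needed at all, since $\mathrm{Var}[X]\le\mathbb E[X^2]$ always holds.

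Second, you handle the cross terms through $\mathbb E[W^{(l)}_{ij}W^{(l)}_{ik}]=0$ for $j\ne k$, together with the independence of $\bm W_l$ from $\bm\psi_{l-1}$. This is the right way to do it. It avoids assuming that the components $\psi^{(l-1)}_j$ are independent or uncorrelated, which generally fails for $l-1\ge 2$ because they all depend on the same $\bm\psi_{l-2}$. A naive ``sum of independent variances'' argument would tacitly assume exactly that.
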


Theorem~\ref{thm:var} shows that by properly choosing the parameter $0<\delta\le 1$, the variance of neuron values in the deeper hidden layers can be effectively reduced, preventing them from clustering in the saturation regions. This mechanism is particularly beneficial for capturing sharp interfaces and steep gradients that arise in the solutions of the CH equation.

\section{GTransNet-BDF schemes for the Cahn-Hilliard equation}\label{sec:GTransNet_BDF}

\subsection{Model problem}

For a bounded domain $\Omega\subset\mathbb R^d$ ($d\le 3$) and a terminal time $T>0$, we consider the following CH equation:
\begin{align}\label{eq:CH}
\frac{\partial u}{\partial t}=D\Delta(-\varepsilon^2\Delta u+f(u)),\quad (\bm x,t)\in \Omega\times (0,T],
\end{align}
subject to the initial condition $u(\cdot,0)=u_0$ and periodic or homogeneous Neumann boundary conditions. In~\eqref{eq:CH}, $u(\bm x,t)$ denotes the order parameter defined as the concentration difference between the two phases, $\varepsilon$ represents the interfacial thickness, $D$ is the diffusion coefficient, and $f(u)=F'(u)$, where $F(u)$ is a double-well potential. In the mixed formulation, the fourth-order CH equation~\eqref{eq:CH} is split into a system of two second-order equations:
\begin{align}\label{eq:CHmix}
    \begin{cases}
        \dfrac{\partial u}{\partial t} = D\Delta \mu, & \text{in } \Omega \times (0,T], \\
        \mu = -\varepsilon^2 \Delta u + f(u), & \text{in } \Omega \times (0,T],
    \end{cases}
\end{align}
where $\mu$ denotes the chemical potential. Note that~\eqref{eq:CH}, or equivalently~\eqref{eq:CHmix}, can be viewed as an $H^{-1}$ gradient flow with respect to the Ginzburg-Landau free energy functional:
\begin{align}
    E(u) = \int_{\Omega} \left( \frac{\varepsilon^2}{2} |\nabla u|^2 + F(u) \right) d\bm x.
    \label{eq:energy_functional}
\end{align}
Under the prescribed boundary conditions, the CH equation possesses two intrinsic properties, namely mass conservation and energy dissipation:
\begin{align}\label{eq:CH_properties}
\frac{d}{dt}\int_\Omega u(\bm x,t)\,d\bm x=0,\qquad \frac{d}{dt}E(u)\le 0,\quad \forall\,t\in (0,T].
\end{align}
Next, we construct the GTransNet-BDF schemes for the mixed system~\eqref{eq:CHmix} and establish discrete analogues of both properties~\eqref{eq:CH_properties} in Section~\ref{sec:properties}.

\subsection{GTransNet-BDF schemes}

We now develop time-stepping schemes that combine the GTransNet basis from Section~\ref{sec:GTransNet} for spatial approximation with first- and second-order BDF for time integration. Consider a uniform partition of the time interval $0=t_0<t_1<\dots<t_K=T$ with the time step size $\Delta t=\nicefrac TK$. Let $\kappa\ge 0$ be a stabilization constant and $f_{\kappa}(u)=f(u)-\kappa u$, then~\eqref{eq:CHmix} can be written equivalently as
\begin{align}\label{eq:CHmix_kappa}
    \begin{cases}
        \dfrac{\partial u}{\partial t} = D\Delta \mu, & \text{in } \Omega \times (0,T], \\
        \mu = -\varepsilon^2 \Delta u+\kappa u + f_{\kappa}(u), & \text{in } \Omega \times (0,T].
    \end{cases}
\end{align}
In what follows, let $\{\phi_j(\bm x)\}_{j=1}^{N_L}$ denote the components of the last hidden layer $\bm\psi_L$ of the GTransNet \eqref{GTransNet}; that is, $\phi_j:=\psi_j^{(L)}$ for $j=1,\ldots,N_L$. The approximate solutions $u_{\mathrm{NN}}$ and $\mu_{\mathrm{NN}}$ for the order parameter $u$ and the chemical potential $\mu$ are then expressed as
\begin{align}\label{u_mu_GTransNet}
u(\bm x,t)\approx u_{\mathrm{NN}}(\bm x,t)=\sum_{j=1}^{N_L}\alpha_j(t)\,\phi_j(\bm x),\quad \mu(\bm x,t)\approx \mu_{\mathrm{NN}}(\bm x,t)=\sum_{j=1}^{N_L}\beta_j(t)\,\phi_j(\bm x).
\end{align}
Substituting the GTransNet approximations~\eqref{u_mu_GTransNet} into the stabilized mixed system~\eqref{eq:CHmix_kappa} and discretizing in time by the BDF method with explicit treatment of the nonlinear term, we obtain the first-order GTransNet-BDF1 scheme
\begin{subequations}\label{GBDF1}
\begin{align}
\sum_{j=1}^{N_L}\frac{\alpha_j^{n+1}-\alpha_j^{n}}{\Delta t}\phi_j(\bm x)&=D\sum_{j=1}^{N_L}\beta_j^{n+1}\Delta\phi_j(\bm x),\label{GBDF1a}\\
\sum_{j=1}^{N_L}\beta_j^{n+1}\phi_j(\bm x)&=\sum_{j=1}^{N_L}\alpha_j^{n+1}(-\varepsilon^2\Delta+\kappa)\phi_j(\bm x)+f_{\kappa}(u_{\mathrm{NN}}^n(\bm x)),\label{GBDF1b}
\end{align}
\end{subequations}
and the second-order GTransNet-BDF2 scheme
\begin{subequations}\label{GBDF2}
\begin{align}
\sum_{j=1}^{N_L}\frac{3\alpha_j^{n+1}-4\alpha_j^n+\alpha_j^{n-1}}{2\Delta t}\phi_j(\bm x)&=D\sum_{j=1}^{N_L}\beta_j^{n+1}\Delta\phi_j(\bm x),\label{GBDF2a}\\
\sum_{j=1}^{N_L}\beta_j^{n+1}\phi_j(\bm x)&=\sum_{j=1}^{N_L}\alpha_j^{n+1}(-\varepsilon^2\Delta+\kappa)\phi_j(\bm x)+f_{\kappa}(2u_{\mathrm{NN}}^n(\bm x)-u_{\mathrm{NN}}^{n-1}(\bm x)),\label{GBDF2b}
\end{align}
\end{subequations}
where $u_{\mathrm{NN}}^k(\bm x)=\sum_{j=1}^{N_L}\alpha_j^k\,\phi_j(\bm x)$ for $k\in\{n-1,n\}$. The schemes are initialized by fitting the initial condition $u_0$ in the least-squares sense to obtain $\{\alpha_j^0\}_{j=1}^{N_L}$, and for the GTransNet-BDF2 scheme, the solution $\{\alpha_j^{1}\}_{j=1}^{N_L}$ at $t=t_1$ is computed using the GTransNet-BDF1 scheme~\eqref{GBDF1}. We note that the nonlinear term in~\eqref{GBDF2b} is treated as $f_\kappa(2u_{\mathrm{NN}}^n-u_{\mathrm{NN}}^{n-1})$, rather than by the standard extrapolation $2f_\kappa(u_{\mathrm{NN}}^n)-f_\kappa(u_{\mathrm{NN}}^{n-1})$, to relax the time step size restriction in the energy stability analysis (cf.~Remark~\ref{rem:nonlinear_egy}).

For homogeneous Neumann boundary conditions, we impose
\begin{align}\label{BC_Neumann}
\sum_{j=1}^{N_L}\alpha_j^{n}\nabla\phi_j(\bm x)\cdot\bm n=\sum_{j=1}^{N_L}\beta_j^{n}\nabla\phi_j(\bm x)\cdot\bm n=0,\quad \forall\bm x\in \partial\Omega,\; 1\le n\le K,
\end{align}
where $\bm n$ denotes the outward unit normal vector to $\partial\Omega$. For periodic boundary conditions, we take $\Omega=(0,1)^d$ for the purpose of presentation and impose
\begin{align}\label{BC_periodic}
\begin{aligned}
\sum_{j=1}^{N_L}\alpha_j^{n}
  \left(\partial_{x_i}^k\phi_j\big|_{x_i=0}
       -\partial_{x_i}^k\phi_j\big|_{x_i=1}\right) &= 0,\quad 1\le n\le K, \\
\sum_{j=1}^{N_L}\beta_j^{n}
  \left(\partial_{x_i}^k\phi_j\big|_{x_i=0}
       -\partial_{x_i}^k\phi_j\big|_{x_i=1}\right) &= 0,\quad 1\le n\le K,
\end{aligned}
\end{align}
for each coordinate direction $i\in\{1,2,\ldots,d\}$ and $k\in\{0,1\}$, here $\bm x=(x_1,x_2,\ldots,x_d)^T\in\partial\Omega$.

\subsection{Implementation}\label{subsec:LSQ}

Next, we describe how the GTransNet-BDF schemes~\eqref{GBDF1}--\eqref{GBDF2} are implemented in practice. Let $\{\bm x_i\}_{i=1}^{K_{\mathrm{int}}}$ and $\{\bm x_i^{\mathrm{bd}}\}_{i=1}^{K_{\mathrm{bd}}}$ denote the sets of interior and boundary collocation points, respectively. We then define the interior basis matrices and the boundary basis matrix as
\begin{align*}
\bm\Phi&=\bigl(\phi_j(\bm x_i)\bigr)\in\mathbb R^{K_{\mathrm{int}}\times N_L},\quad \bm\Phi_{\Delta}=\bigl(\Delta\phi_j(\bm x_i)\bigr)\in\mathbb R^{K_{\mathrm{int}}\times N_L},\\
\bm\Phi_{\mathrm{bd}}&=\bigl(\partial_{\bm n}\phi_j(\bm x_i^{\mathrm{bd}})\bigr)\in\mathbb R^{K_{\mathrm{bd}}\times N_L},\text{ where }\partial_{\bm n}\phi_j:=\nabla\phi_j\cdot\bm n,
\end{align*}
in the homogeneous Neumann case (and analogously, $\bm\Phi_{\mathrm{bd}}$ collects the value and first-derivative differences across paired faces in the periodic case). With $\bm\alpha^n=(\alpha_1^n,\ldots,\alpha_{N_L}^n)^T$ and $\bm\beta^n=(\beta_1^n,\ldots,\beta_{N_L}^n)^T$ the coefficient vectors of $u_{\mathrm{NN}}^n$ and $\mu_{\mathrm{NN}}^n$, respectively, the unknown vector $\bm c^{n+1}=((\bm\alpha^{n+1})^T,(\bm\beta^{n+1})^T)^T\in\mathbb R^{2N_L}$ is obtained for both schemes by solving the linear least-squares system
\begin{align}\label{LSQ}
\bm A\,\bm c^{n+1}=\bm b^{n+1}.
\end{align}
For the GTransNet-BDF1 scheme~\eqref{GBDF1}, the system matrix and right-hand side are
\begin{align*}
\bm A=\bm A_{\mathrm{BDF1}}:=\begin{pmatrix} \bm\Phi & -D\Delta t\,\bm\Phi_\Delta\\ \varepsilon^2\bm\Phi_\Delta-\kappa\bm\Phi & \bm\Phi \\ \bm\Phi_{\mathrm{bd}} & \bm 0\\ \bm 0 & \bm\Phi_{\mathrm{bd}}\end{pmatrix},\qquad \bm b^{n+1}=\bm b_{\mathrm{BDF1}}^{n+1}:=\begin{pmatrix} \bm\Phi\bm\alpha^n\\ f_\kappa(\bm\Phi\bm\alpha^n)\\ \bm 0\\ \bm 0\end{pmatrix}.
\end{align*}
For the GTransNet-BDF2 scheme~\eqref{GBDF2},
\begin{align*}
\bm A=\bm A_{\mathrm{BDF2}}:=\begin{pmatrix} \tfrac32\bm\Phi & -D\Delta t\,\bm\Phi_\Delta\\ \varepsilon^2\bm\Phi_\Delta-\kappa\bm\Phi & \bm\Phi \\ \bm\Phi_{\mathrm{bd}} & \bm 0\\ \bm 0 & \bm\Phi_{\mathrm{bd}}\end{pmatrix},\qquad \bm b^{n+1}=\bm b_{\mathrm{BDF2}}^{n+1}:=\begin{pmatrix} 2\bm\Phi\bm\alpha^n-\tfrac12\bm\Phi\bm\alpha^{n-1}\\ f_\kappa(2\bm\Phi\bm\alpha^n-\bm\Phi\bm\alpha^{n-1})\\ \bm 0\\ \bm 0\end{pmatrix}.
\end{align*}
In both cases, the third and fourth block rows of $\bm A$ impose the boundary conditions on $\bm\alpha^{n+1}$ and $\bm\beta^{n+1}$, respectively. Since the coefficient matrix $\bm A$ in~\eqref{LSQ} (that is, $\bm A_{\mathrm{BDF1}}$ or $\bm A_{\mathrm{BDF2}}$) is independent of $n$, we precompute its reduced QR factorization $\bm A=\bm Q\bm R$ once and then solve
\begin{align}\label{eq:normal}
\bm R\bm c^{n+1}=\bm Q^T\bm b^{n+1}
\end{align}
via back substitution at each time step. Consequently, the per-step cost is dominated by the matrix-vector product $\bm Q^T\bm b^{n+1}$, which is more efficient than directly solving the least-squares system~\eqref{LSQ} at each time step.

At $t=0$, the initial condition $u_0(\bm x)$ is projected onto the GTransNet basis by solving the least-squares problem
\begin{align*}
\bm\alpha^0=\arg\min_{\bm\alpha\in\mathbb R^{N_L}}\left\|\bm\Phi_{\mathrm{init}}\bm\alpha-\bm u_0\right\|_2^2,
\end{align*}
where $\bm\Phi_{\mathrm{init}}=(\phi_j(\bm x_i^{\mathrm{init}}))_{i,j}$ is the basis matrix evaluated at a set of initialization points $\{\bm x_i^{\mathrm{init}}\}$, which includes both interior and boundary points, and $\bm u_0=(u_0(\bm x_i^{\mathrm{init}}))_i$.

\section{Properties of GTransNet-BDF schemes}\label{sec:properties}
In this section, we establish the mass conservation and energy stability of the first- and second-order GTransNet-BDF schemes~\eqref{GBDF1}--\eqref{GBDF2}. To that end, we define the time-discrete mass and free energy (cf.~\eqref{eq:energy_functional}) as
\begin{align*}
\mathcal M(u_{\mathrm{NN}}^n)=\int_\Omega u_{\mathrm{NN}}^n(\bm x)\,d\bm x,\quad E(u_{\mathrm{NN}}^n)=\int_\Omega\left(\frac{\varepsilon^2}{2}|\nabla u_{\mathrm{NN}}^n|^2+F(u_{\mathrm{NN}}^n)\right)d\bm x.
\end{align*}
Throughout this section, $(\cdot,\cdot)$ and $\|\cdot\|$ denote the standard $L^2(\Omega)$ inner product and associated norm, respectively. We shall restrict our attention to potential functions $F(u)$ whose derivative $f(u)=F'(u)$ satisfies the following condition
\begin{align}\label{f_lipschitz}
L_f:=\max_{u\in\mathbb R}|f'(u)|<\infty,
\end{align}
which holds under a truncation of $F$ (and hence $f$) outside a bounded interval; we refer to~\cite{Shen10,FuYang22} for the explicit form of a truncated double-well potential with quadratic growth at infinity.
For the analysis of the GTransNet-BDF2 scheme, we further recall the inverse Laplacian operator $(-\Delta)^{-1}$ defined on the mean-zero subspace
$$L_0^2(\Omega):=\Bigl\{v\in L^2(\Omega):\int_\Omega v\,d\bm x=0\Bigr\}.$$
For $v\in L_0^2(\Omega)$, we define $(-\Delta)^{-1}v=\varphi$ with $\varphi\in H^1(\Omega)\cap L_0^2(\Omega)$ solving the Poisson's equation $-\Delta\varphi=v$ under either periodic or homogeneous Neumann boundary conditions. The associated $H^{-1}$-norm is given by $\|v\|_{-1}:=\sqrt{(v,(-\Delta)^{-1}v)}$. One easily verifies the interpolation inequality
\begin{align*}
\|v\|^2\le \|\nabla v\|\cdot\|v\|_{-1},\quad \forall\,v\in H^1(\Omega)\cap L_0^2(\Omega).
\end{align*}
As a consequence, applying Young's inequality yields
\begin{align}\label{interpolation_young}
a\|v\|_{-1}^2+b\|\nabla v\|^2\ge 2\sqrt{ab}\,\|v\|^2,\quad \forall\,a,b\ge 0,\;\forall\,v\in H^1(\Omega)\cap L_0^2(\Omega).
\end{align}
Finally, we denote $\delta_tv^{n+1}:=v^{n+1}-v^n$ and $\delta_{tt}v^{n+1}:=v^{n+1}-2v^n+v^{n-1}$ for any sequence $\{v^n\}$.

By integrating the first equations~\eqref{GBDF1a} and~\eqref{GBDF2a} of the GTransNet-BDF1 and GTransNet-BDF2 schemes over $\Omega$, applying the divergence theorem, and invoking the boundary conditions~\eqref{BC_Neumann} or~\eqref{BC_periodic} on $\mu_{\mathrm{NN}}^{n+1}$, we obtain the following discrete mass conservation.

\begin{theorem}[Mass conservation]\label{thm:mass}
The first- and second-order GTransNet-BDF schemes (cf. \eqref{GBDF1} and~\eqref{GBDF2}) conserve mass unconditionally, i.e., for any time step size $\Delta t>0$, we have
\begin{align*}
\mathcal M(u_{\mathrm{NN}}^{n+1})=\mathcal M(u_{\mathrm{NN}}^n),\quad 0\le n\le K-1.
\end{align*}
\end{theorem}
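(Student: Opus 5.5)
The plan is to integrate the first equation of each scheme over $\Omega$, use the divergence theorem to turn the Laplacian term into a boundary flux, and show that this flux vanishes under the boundary conditions~\eqref{BC_Neumann} or~\eqref{BC_periodic} imposed on $\mu_{\mathrm{NN}}^{n+1}$. We work at the space-continuous level: \eqref{GBDF1a} and~\eqref{GBDF2a} are taken to hold pointwise for all $\bm x\in\Omega$, and the boundary conditions hold on all of $\partial\Omega$. The least-squares residual is deliberately ignored here and is treated in Section~\ref{sec:projection}. Since each $\phi_j$ is a composition of $\tanh$ with affine and linear maps, it is smooth (in fact real-analytic) on $\overline\Omega$. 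Hence $\mu_{\mathrm{NN}}^{n+1}=\sum_j\beta_j^{n+1}\phi_j$ is smooth, and Green's formula applies once we assume $\Omega$ is Lipschitz.

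\textbf{BDF1.} Integrating~\eqref{GBDF1a} over $\Omega$ gives
\[
\frac{\mathcal M(u_{\mathrm{NN}}^{n+1})-\mathcal M(u_{\mathrm{NN}}^n)}{\Delta t}
= D\int_\Omega\Delta\mu_{\mathrm{NN}}^{n+1}\,d\bm x
= D\int_{\partial\Omega}\partial_{\bm n}\mu_{\mathrm{NN}}^{n+1}\,ds .
\]
\begin{itemize}
\item In the Neumann case, the second identity in~\eqref{BC_Neumann} with index $n+1$ makes the integrand vanish on $\partial\Omega$.
\item In the periodic case with $\Omega=(0,1)^d$, split $\partial\Omega$ into the pairs of faces $\{x_i=0\}$ and $\{x_i=1\}$. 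The outward normals on a pair are $-\bm e_i$ and $+\bm e_i$, so that pair contributes
\[
\int_{(0,1)^{d-1}}\Bigl(\partial_{x_i}\mu_{\mathrm{NN}}^{n+1}\big|_{x_i=1}-\partial_{x_i}\mu_{\mathrm{NN}}^{n+1}\big|_{x_i=0}\Bigr)\,d\bm x',
\]
where $\bm x'$ denotes the remaining coordinates. This integrand vanishes by~\eqref{BC_periodic} with $k=1$.
\end{itemize}
Thus the right-hand side is zero in both cases, and $\mathcal M(u_{\mathrm{NN}}^{n+1})=\mathcal M(u_{\mathrm{NN}}^n)$ for every $n$.

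\textbf{BDF2.} The same computation applied to~\eqref{GBDF2a} yields
\[
3\mathcal M(u_{\mathrm{NN}}^{n+1})-4\mathcal M(u_{\mathrm{NN}}^n)+\mathcal M(u_{\mathrm{NN}}^{n-1})=0,\qquad n\ge1.
\]
Write $m^k:=\mathcal M(u_{\mathrm{NN}}^k)$. This is $3(m^{n+1}-m^n)=m^n-m^{n-1}$, i.e.\ $m^{n+1}-m^n=\tfrac13(m^n-m^{n-1})$. The scheme computes $u_{\mathrm{NN}}^1$ with BDF1, so the BDF1 argument gives $m^1=m^0$. Induction on $n$ then gives $m^{n+1}=m^n$ for all $0\le n\le K-1$.

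No step imposes a restriction on $\Delta t$. The only genuinely delicate point is the periodic flux cancellation. It requires the condition to hold at every point of the paired faces, and the tangential coordinates on opposite faces must be identified in the natural way, which~\eqref{BC_periodic} encodes. One more detail is that the boundary conditions are stated for $1\le n\le K$, which covers the index $n+1$ used in every step. The condition on $\mu_{\mathrm{NN}}$ alone is what matters; the conditions on $u_{\mathrm{NN}}$ are not used.
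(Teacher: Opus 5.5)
Your proof is correct and is essentially the paper's argument. Like the paper, you integrate the first equation of each scheme over $\Omega$, apply the divergence theorem, and use the Neumann or periodic conditions on $\mu_{\mathrm{NN}}^{n+1}$ to make the boundary flux vanish. You also fill in a detail the paper leaves implicit: for BDF2 you reduce $3m^{n+1}-4m^n+m^{n-1}=0$ to $m^{n+1}=m^n$ using the BDF1 starting step and induction.
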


Following the standard energy-estimate technique for the stabilized BDF1 discretization of the CH equation developed in~\cite{Shen10}, we have the following result.

\begin{theorem}[Unconditional energy stability of GTransNet-BDF1]\label{thm:egy_BDF1}
If the stabilization constant satisfies $\kappa\ge L_f/2$, then the GTransNet-BDF1 scheme~\eqref{GBDF1} is unconditionally energy stable, that is,
\begin{align*}
E(u_{\mathrm{NN}}^{n+1})\le E(u_{\mathrm{NN}}^n),\quad 0\le n\le K-1.
\end{align*}
\end{theorem}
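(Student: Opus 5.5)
The plan is to follow the standard energy estimate for the stabilized semi-implicit Euler scheme in~\cite{Shen10}. We work at the space-continuous level, treating~\eqref{GBDF1a}--\eqref{GBDF1b} as pointwise identities in $\Omega$ between functions $u_{\mathrm{NN}}^{n+1},\mu_{\mathrm{NN}}^{n+1}$ in the GTransNet span that satisfy the boundary conditions~\eqref{BC_Neumann} or~\eqref{BC_periodic}. We write $u^k=u_{\mathrm{NN}}^k$ and $\mu^{n+1}=\mu_{\mathrm{NN}}^{n+1}$. In function form the scheme reads
\begin{align*}
\frac{\delta_t u^{n+1}}{\Delta t}=D\Delta\mu^{n+1},\qquad \mu^{n+1}=-\varepsilon^2\Delta u^{n+1}+\kappa\,\delta_t u^{n+1}+f(u^n),
\end{align*}
since $\kappa u^{n+1}+f_\kappa(u^n)=\kappa\,\delta_t u^{n+1}+f(u^n)$.

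\textbf{Step 1.} Test the first equation with $\mu^{n+1}$ and the second with $\delta_t u^{n+1}$, then integrate by parts. The boundary terms vanish by the homogeneous Neumann conditions or by periodicity, since we imposed the conditions on both $u$ and $\mu$ (values and first derivatives in the periodic case). The first pairing gives $(\delta_t u^{n+1},\mu^{n+1})=-D\Delta t\|\nabla\mu^{n+1}\|^2\le 0$. The second gives
\begin{align*}
(\mu^{n+1},\delta_t u^{n+1})=\varepsilon^2(\nabla u^{n+1},\nabla\delta_t u^{n+1})+\kappa\|\delta_t u^{n+1}\|^2+(f(u^n),\delta_t u^{n+1}).
\end{align*}

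\textbf{Step 2.} Handle the gradient term with the identity $2(a,a-b)=\|a\|^2-\|b\|^2+\|a-b\|^2$, which yields
\begin{align*}
\frac{\varepsilon^2}{2}\Bigl(\|\nabla u^{n+1}\|^2-\|\nabla u^n\|^2+\|\nabla\delta_t u^{n+1}\|^2\Bigr).
\end{align*}

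\textbf{Step 3.} Treat the nonlinear term, which is the only delicate point. Apply Taylor's theorem pointwise:
\begin{align*}
F(u^{n+1})-F(u^n)=f(u^n)\,\delta_t u^{n+1}+\tfrac12 f'(\xi)\,(\delta_t u^{n+1})^2 .
\end{align*}
Using~\eqref{f_lipschitz}, this gives
\begin{align*}
(f(u^n),\delta_t u^{n+1})\ge \int_\Omega\bigl(F(u^{n+1})-F(u^n)\bigr)\,d\bm x-\frac{L_f}{2}\|\delta_t u^{n+1}\|^2 .
\end{align*}

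\textbf{Step 4.} Combine Steps 1--3:
\begin{align*}
E(u^{n+1})-E(u^n)+\frac{\varepsilon^2}{2}\|\nabla\delta_t u^{n+1}\|^2+\Bigl(\kappa-\frac{L_f}{2}\Bigr)\|\delta_t u^{n+1}\|^2\le -D\Delta t\|\nabla\mu^{n+1}\|^2\le 0.
\end{align*}
With $\kappa\ge L_f/2$ every extra term on the left is nonnegative, so $E(u^{n+1})\le E(u^n)$ for any $\Delta t>0$.

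The main thing to check carefully is the justification of the integrations by parts for the network functions. These are smooth ($\tanh$ compositions), so the only issue is that the boundary conditions hold exactly on $\partial\Omega$, which is the space-continuous assumption of this section. The least-squares residual is not addressed here.
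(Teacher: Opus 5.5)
Your proof is correct and is essentially the paper's argument: the paper gives no separate proof of this theorem and simply invokes the standard stabilized semi-implicit energy estimate of \cite{Shen10}, which is exactly what you carry out. That is, you pair with $\mu_{\mathrm{NN}}^{n+1}$ and $\delta_t u_{\mathrm{NN}}^{n+1}$, apply the polarization identity to the gradient term, and use the pointwise Taylor bound with $L_f$, leaving the nonnegative term $(\kappa-L_f/2)\|\delta_t u_{\mathrm{NN}}^{n+1}\|^2$. One small caveat, shared with the paper's statement: in the periodic case at $n=0$, the boundary term from $(-\varepsilon^2\Delta u_{\mathrm{NN}}^{1},\delta_t u_{\mathrm{NN}}^{1})$ vanishes only if $u_{\mathrm{NN}}^0$ is itself periodic, which the unconstrained least-squares initial fit does not enforce; in the Neumann case no condition on $u_{\mathrm{NN}}^0$ is needed.
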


For the GTransNet-BDF2 scheme, the analysis is more involved and yields only conditional energy stability under a time step size restriction, as stated in the following theorem.

\begin{theorem}[Conditional energy stability of GTransNet-BDF2]\label{thm:egy_BDF2}
If the stabilization constant and the time step size satisfy
\begin{align}\label{Dt_constraint}
\kappa\ge L_f,\quad\Delta t\le \frac{8\varepsilon^2}{D L_f^2},
\end{align}
then the GTransNet-BDF2 scheme~\eqref{GBDF2} is energy stable in the sense that
\begin{align}\label{egy_BDF2}
\widetilde E(u_{\mathrm{NN}}^{n+1})\le \widetilde E(u_{\mathrm{NN}}^n),\quad 1\le n\le K-1,
\end{align}
where the modified energy is defined by
$$\widetilde E(u_{\mathrm{NN}}^n):=E(u_{\mathrm{NN}}^n)+\frac{1}{4D\Delta t}\|\delta_tu_{\mathrm{NN}}^{n}\|_{-1}^2+\frac{\kappa+L_f}{2}\|\delta_tu_{\mathrm{NN}}^{n}\|^2.$$
\end{theorem}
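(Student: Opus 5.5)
The plan is the standard $H^{-1}$ energy method for stabilized BDF2, carried out on the time-discrete functions $u_{\mathrm{NN}}^n,\mu_{\mathrm{NN}}^n$. Write $a:=\delta_tu_{\mathrm{NN}}^{n+1}$, $b:=\delta_tu_{\mathrm{NN}}^{n}$, $\bar u:=2u_{\mathrm{NN}}^n-u_{\mathrm{NN}}^{n-1}$, and note that $a-b=\delta_{tt}u_{\mathrm{NN}}^{n+1}$ and $3u_{\mathrm{NN}}^{n+1}-4u_{\mathrm{NN}}^n+u_{\mathrm{NN}}^{n-1}=3a-b$.

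\textbf{Step 1: test \eqref{GBDF2a} with $(-\Delta)^{-1}a$.} By Theorem~\ref{thm:mass}, $a,b\in L_0^2(\Omega)$, so $(-\Delta)^{-1}a$ is well defined. Pair \eqref{GBDF2a} with $(-\Delta)^{-1}a$ and integrate by parts twice. The boundary conditions \eqref{BC_Neumann}/\eqref{BC_periodic} on $\mu_{\mathrm{NN}}^{n+1}$ and on $(-\Delta)^{-1}a$ kill the boundary terms, which gives
\[
\frac{1}{2D\Delta t}\bigl(3a-b,(-\Delta)^{-1}a\bigr)=-(\mu_{\mathrm{NN}}^{n+1},a).
\]
The algebraic identity $(3a-b)a=2a^2+\tfrac12(a^2-b^2)+\tfrac12(a-b)^2$, applied in the $H^{-1}$ inner product, turns the left side into
\[
\frac{1}{D\Delta t}\|a\|_{-1}^2+\frac{1}{4D\Delta t}\bigl(\|a\|_{-1}^2-\|b\|_{-1}^2\bigr)+\frac{1}{4D\Delta t}\|a-b\|_{-1}^2 .
\]
The middle term is exactly the telescoping piece $\frac{1}{4D\Delta t}\|\delta_tu\|_{-1}^2$ in $\widetilde E$. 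The first term, $\frac1{D\Delta t}\|a\|_{-1}^2$, is kept as the "good" dissipation.

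\textbf{Step 2: test \eqref{GBDF2b} with $a$.} For the gradient term, $-\varepsilon^2(\Delta u_{\mathrm{NN}}^{n+1},a)$ gives
\[
\tfrac{\varepsilon^2}{2}\bigl(\|\nabla u_{\mathrm{NN}}^{n+1}\|^2-\|\nabla u_{\mathrm{NN}}^n\|^2+\|\nabla a\|^2\bigr).
\]
For the stabilization, since $f_\kappa(\bar u)=f(\bar u)-\kappa\bar u$ and $u_{\mathrm{NN}}^{n+1}-\bar u=a-b$, the $\kappa$-terms combine into
\[
\kappa(a-b,a)=\tfrac\kappa2\bigl(\|a\|^2-\|b\|^2+\|a-b\|^2\bigr).
\]
For the nonlinear term $(f(\bar u),a)$, I will use Taylor's theorem with the Lipschitz bound \eqref{f_lipschitz} to compare it with $(F(u_{\mathrm{NN}}^{n+1})-F(u_{\mathrm{NN}}^n),1)$. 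Expanding around $u^{n+1}$ gives
\[
F(u^{n+1})-F(u^n)\le f(u^{n+1})a+\tfrac{L_f}{2}a^2 ,
\]
and $|f(u^{n+1})-f(\bar u)|\le L_f|a-b|$. Hence
\[
(f(\bar u),a)\ge \bigl(F(u^{n+1})-F(u^n),1\bigr)-\tfrac{L_f}{2}\|a\|^2-L_f\bigl(|a-b|,|a|\bigr).
\]

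\textbf{Step 3: combine, then absorb the bad terms.} Adding Steps 1 and 2 yields $\widetilde E(u^{n+1})-\widetilde E(u^n)$ plus nonnegative terms $\frac1{D\Delta t}\|a\|_{-1}^2$, $\tfrac{\varepsilon^2}{2}\|\nabla a\|^2$, $\tfrac\kappa2\|a-b\|^2$, balanced against the nonlinear error.

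The cross term $L_f(|a-b|,|a|)$ will be split by Young's inequality. Part of it goes into $\tfrac\kappa2\|a-b\|^2$, which is possible because $\kappa\ge L_f$. The remaining multiples of $\|a\|^2$ and $\|b\|^2$ are the reason for the extra $\tfrac{L_f}{2}\|\delta_tu\|^2$ in $\widetilde E$: I will rewrite such terms as a telescoping difference $\tfrac{L_f}{2}(\|a\|^2-\|b\|^2)$ plus a residual $-c\,L_f\|a\|^2$.

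That residual is controlled by \eqref{interpolation_young} with weights $\frac{1}{D\Delta t}$ and $\tfrac{\varepsilon^2}{2}$:
\[
\frac{1}{D\Delta t}\|a\|_{-1}^2+\frac{\varepsilon^2}{2}\|\nabla a\|^2\ge 2\sqrt{\frac{\varepsilon^2}{2D\Delta t}}\;\|a\|^2 .
\]
This dominates $c\,L_f\|a\|^2$ exactly when $\Delta t\le \frac{2\varepsilon^2}{c^2DL_f^2}$. The target constant in \eqref{Dt_constraint} corresponds to $c=\tfrac12$.

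\textbf{Main obstacle.} The hard step is this bookkeeping. The split of the nonlinear error must leave at most $\tfrac{L_f}{2}\|a\|^2$ unabsorbed after using $\tfrac\kappa2\|a-b\|^2$ and the modified-energy term $\tfrac{\kappa+L_f}{2}\|\delta_tu\|^2$. A naive split (for example, expanding around $u^n$ and using $|f(\bar u)-f(u^n)|\le L_f|b|$) gives residual coefficient $L_f$ or $\tfrac32L_f$ and hence a worse step restriction. I would first try expanding $F$ around $u^{n+1}$ (as above), and then choose the Young weights so that the $\|a-b\|^2$ term is fully absorbed by $\tfrac\kappa2\|a-b\|^2$. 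If that is not enough, I would reduce $(|a-b|,|a|)$ via $|a-b||a|\le\tfrac12|a-b|^2+\tfrac12a^2$ and put the leftover into the $\tfrac{L_f}{2}$ telescoping piece. I would then tune these weights until exactly $\tfrac{L_f}{2}\|a\|^2$ remains.
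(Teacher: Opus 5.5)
Your Steps 1 and 2 (apart from the nonlinear term) and your final interpolation step match the paper's proof. The nonlinear estimate you propose, however, cannot reach the stated constant, and better tuning does not fix it. Expanding $F$ about $u_{\mathrm{NN}}^{n+1}$ and then using $|f(u_{\mathrm{NN}}^{n+1})-f(\bar u)|\le L_f|a-b|$ leaves the error $\frac{L_f}{2}\|a\|^2+L_f(|a-b|,|a|)$, which has no $\|b\|^2$ term. But $\widetilde E$ forces you to put $\frac{L_f}{2}(\|a\|^2-\|b\|^2)$ on the left. After adding it, the quantity you must bound by $\frac{L_f}{2}\|a\|^2$ is exactly $L_f\|a\|^2-\frac{L_f}{2}\|b\|^2+L_f(|a-b|,|a|)-\frac{\kappa}{2}\|a-b\|^2$.

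With $\kappa=L_f$ and $b=0$ this equals $\frac32L_f\|a\|^2$. The interpolation step then needs $\Delta t\le 8\varepsilon^2/(9DL_f^2)$, nine times smaller than \eqref{Dt_constraint}; this is the constant of Remark~\ref{rem:nonlinear_egy}(ii). Your fallback $|a-b||a|\le\frac12|a-b|^2+\frac12a^2$ ends at the same $\frac32L_f\|a\|^2$. Taking $b=(1-s)a$ with small $s>0$ gives the exact leftover $\bigl(\frac12+2s-\frac{\kappa+L_f}{2L_f}s^2\bigr)L_f\|a\|^2>\frac{L_f}{2}\|a\|^2$. So no choice of Young weights, and no larger $\kappa$, closes the argument at the stated step size.

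The missing idea is to Taylor-expand both $F(u_{\mathrm{NN}}^{n+1})$ and $F(u_{\mathrm{NN}}^n)$ about the extrapolated point $\bar u$, where $f$ is evaluated. Since $u_{\mathrm{NN}}^{n+1}-\bar u=a-b$ and $u_{\mathrm{NN}}^n-\bar u=-b$, subtracting the two expansions gives the first-order term $f(\bar u)\bigl((a-b)+b\bigr)=f(\bar u)a$ exactly, with no Lipschitz cross term. This yields
\[
-(f(\bar u),a)\le-\bigl(F(u_{\mathrm{NN}}^{n+1})-F(u_{\mathrm{NN}}^n),1\bigr)+\frac{L_f}{2}\bigl(\|a-b\|^2+\|b\|^2\bigr).
\]
The term $\frac{L_f}{2}\|a-b\|^2$ is absorbed by $\frac{\kappa}{2}\|a-b\|^2$ because $\kappa\ge L_f$. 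Rewriting $\frac{L_f}{2}\|b\|^2=\frac{L_f}{2}\|a\|^2-\frac{L_f}{2}(\|a\|^2-\|b\|^2)$ supplies exactly the extra telescoping piece in $\widetilde E$, with residual $\frac{L_f}{2}\|a\|^2$. That is $c=\frac12$ in your notation, which your interpolation step handles under \eqref{Dt_constraint}. This is also why the scheme uses $f_\kappa(2u_{\mathrm{NN}}^n-u_{\mathrm{NN}}^{n-1})$ rather than an extrapolation of $f$.
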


\begin{proof}
By the mass conservation of GTransNet-BDF schemes in Theorem~\ref{thm:mass}, $\delta_tu_{\mathrm{NN}}^{n+1}\in L_0^2(\Omega)$, so $(-\Delta)^{-1}\delta_tu_{\mathrm{NN}}^{n+1}$ is well-defined. Taking the $L^2$ inner product of~\eqref{GBDF2a} with $(-\Delta)^{-1}\delta_tu_{\mathrm{NN}}^{n+1}$, applying the divergence theorem together with the boundary conditions, and dividing both sides by $D$, we obtain
\begin{align}\label{egy_bdf2_bdt1}
\frac{1}{2D\Delta t}\bigl(3u_{\mathrm{NN}}^{n+1}-4u_{\mathrm{NN}}^n+u_{\mathrm{NN}}^{n-1},(-\Delta)^{-1}\delta_tu_{\mathrm{NN}}^{n+1}\bigr)=-\bigl(\mu_{\mathrm{NN}}^{n+1},\delta_tu_{\mathrm{NN}}^{n+1}\bigr).
\end{align}
Note that~\eqref{GBDF2b} can be rewritten, using $f_\kappa(u)=f(u)-\kappa u$, as
\begin{align}\label{GBDF2b_rewrite}
\mu_{\mathrm{NN}}^{n+1}=-\varepsilon^2\Delta u_{\mathrm{NN}}^{n+1}+f(2u_{\mathrm{NN}}^n-u_{\mathrm{NN}}^{n-1})+\kappa\delta_{tt}u_{\mathrm{NN}}^{n+1}.
\end{align}
Taking the $L^2$ inner product of~\eqref{GBDF2b_rewrite} with $\delta_tu_{\mathrm{NN}}^{n+1}$ and applying integration by parts, we obtain
\begin{align}\label{egy_bdf2_bdf2}
\bigl(\mu_{\mathrm{NN}}^{n+1},\delta_tu_{\mathrm{NN}}^{n+1}\bigr)=\varepsilon^2\bigl(\nabla u_{\mathrm{NN}}^{n+1},\nabla\delta_tu_{\mathrm{NN}}^{n+1}\bigr)+\bigl(f(2u_{\mathrm{NN}}^n-u_{\mathrm{NN}}^{n-1}),\delta_tu_{\mathrm{NN}}^{n+1}\bigr)+\kappa\bigl(\delta_{tt}u_{\mathrm{NN}}^{n+1},\delta_tu_{\mathrm{NN}}^{n+1}\bigr).
\end{align}
Combining~\eqref{egy_bdf2_bdt1} and~\eqref{egy_bdf2_bdf2} yields
\begin{align}\label{egy_BDF2_master}
\begin{aligned}
&\frac{1}{2D\Delta t}\bigl(3u_{\mathrm{NN}}^{n+1}-4u_{\mathrm{NN}}^n+u_{\mathrm{NN}}^{n-1},(-\Delta)^{-1}\delta_tu_{\mathrm{NN}}^{n+1}\bigr)+\varepsilon^2\bigl(\nabla u_{\mathrm{NN}}^{n+1},\nabla\delta_tu_{\mathrm{NN}}^{n+1}\bigr)\\
&\qquad+\kappa\bigl(\delta_{tt}u_{\mathrm{NN}}^{n+1},\delta_tu_{\mathrm{NN}}^{n+1}\bigr)=-\bigl(f(2u_{\mathrm{NN}}^n-u_{\mathrm{NN}}^{n-1}),\delta_tu_{\mathrm{NN}}^{n+1}\bigr).
\end{aligned}
\end{align}
For the first term in~\eqref{egy_BDF2_master}, by decomposing $3u_{\mathrm{NN}}^{n+1}-4u_{\mathrm{NN}}^n+u_{\mathrm{NN}}^{n-1}=2\delta_tu_{\mathrm{NN}}^{n+1}+\delta_{tt}u_{\mathrm{NN}}^{n+1}$ and applying the identity $a(a-b)=\frac 12[a^2-b^2+(a-b)^2]$ in the $H^{-1}$ inner product, we obtain
\begin{align}\label{LHS_BDF2_first}
\begin{aligned}
&\frac{1}{2D\Delta t}\bigl(3u_{\mathrm{NN}}^{n+1}-4u_{\mathrm{NN}}^n+u_{\mathrm{NN}}^{n-1},(-\Delta)^{-1}\delta_tu_{\mathrm{NN}}^{n+1}\bigr)\\
&\quad=\frac{1}{D\Delta t}\|\delta_tu_{\mathrm{NN}}^{n+1}\|_{-1}^2+\frac{1}{4D\Delta t}\bigl(\|\delta_tu_{\mathrm{NN}}^{n+1}\|_{-1}^2-\|\delta_tu_{\mathrm{NN}}^n\|_{-1}^2+\|\delta_{tt}u_{\mathrm{NN}}^{n+1}\|_{-1}^2\bigr).
\end{aligned}
\end{align}
For the remaining two terms on the left-hand side of~\eqref{egy_BDF2_master}, we again apply the identity $a(a-b)=\frac 12[a^2-b^2+(a-b)^2]$, with $\delta_{tt}u_{\mathrm{NN}}^{n+1}=\delta_tu_{\mathrm{NN}}^{n+1}-\delta_tu_{\mathrm{NN}}^n$, to deduce that
\begin{align}\label{LHS_BDF2_rest}
\begin{aligned}
\varepsilon^2\bigl(\nabla u_{\mathrm{NN}}^{n+1},\nabla\delta_tu_{\mathrm{NN}}^{n+1}\bigr)&=\frac{\varepsilon^2}{2}\bigl(\|\nabla u_{\mathrm{NN}}^{n+1}\|^2-\|\nabla u_{\mathrm{NN}}^n\|^2+\|\nabla\delta_tu_{\mathrm{NN}}^{n+1}\|^2\bigr),\\
\kappa\bigl(\delta_{tt}u_{\mathrm{NN}}^{n+1},\delta_tu_{\mathrm{NN}}^{n+1}\bigr)&=\frac{\kappa}{2}\bigl(\|\delta_tu_{\mathrm{NN}}^{n+1}\|^2-\|\delta_tu_{\mathrm{NN}}^n\|^2+\|\delta_{tt}u_{\mathrm{NN}}^{n+1}\|^2\bigr).
\end{aligned}
\end{align}
For the right-hand side of~\eqref{egy_BDF2_master}, we apply the Taylor expansion of $F$ around $2u_{\mathrm{NN}}^n-u_{\mathrm{NN}}^{n-1}$. Noting that $u_{\mathrm{NN}}^{n+1}-(2u_{\mathrm{NN}}^n-u_{\mathrm{NN}}^{n-1})=\delta_{tt}u_{\mathrm{NN}}^{n+1}$ and $u_{\mathrm{NN}}^n-(2u_{\mathrm{NN}}^n-u_{\mathrm{NN}}^{n-1})=-\delta_tu_{\mathrm{NN}}^n$, the bound~\eqref{f_lipschitz} on $F''=f'$ yields
\begin{align}\label{taylor_F}
\begin{aligned}
F(u_{\mathrm{NN}}^{n+1})&\le F(2u_{\mathrm{NN}}^n-u_{\mathrm{NN}}^{n-1})+f(2u_{\mathrm{NN}}^n-u_{\mathrm{NN}}^{n-1})\,\delta_{tt}u_{\mathrm{NN}}^{n+1}+\frac{L_f}{2}|\delta_{tt}u_{\mathrm{NN}}^{n+1}|^2,\\
F(u_{\mathrm{NN}}^{n})&\ge F(2u_{\mathrm{NN}}^n-u_{\mathrm{NN}}^{n-1})-f(2u_{\mathrm{NN}}^n-u_{\mathrm{NN}}^{n-1})\,\delta_tu_{\mathrm{NN}}^n-\frac{L_f}{2}|\delta_tu_{\mathrm{NN}}^n|^2.
\end{aligned}
\end{align}
Subtracting the second inequality from the first in~\eqref{taylor_F} and using $\delta_{tt}u_{\mathrm{NN}}^{n+1}+\delta_tu_{\mathrm{NN}}^n=\delta_tu_{\mathrm{NN}}^{n+1}$ yields
\begin{align}\label{F_diff_pointwise}
F(u_{\mathrm{NN}}^{n+1})-F(u_{\mathrm{NN}}^n)\le f(2u_{\mathrm{NN}}^n-u_{\mathrm{NN}}^{n-1})\,\delta_tu_{\mathrm{NN}}^{n+1}+\frac{L_f}{2}\bigl(|\delta_{tt}u_{\mathrm{NN}}^{n+1}|^2+|\delta_tu_{\mathrm{NN}}^n|^2\bigr).
\end{align}
Integrating~\eqref{F_diff_pointwise} over $\Omega$ and rearranging gives
\begin{align}\label{nonlinear_est}
-\bigl(f(2u_{\mathrm{NN}}^n-u_{\mathrm{NN}}^{n-1}),\delta_tu_{\mathrm{NN}}^{n+1}\bigr)\le -\bigl(F(u_{\mathrm{NN}}^{n+1})-F(u_{\mathrm{NN}}^n),1\bigr)+\frac{L_f}{2}\bigl(\|\delta_{tt}u_{\mathrm{NN}}^{n+1}\|^2+\|\delta_tu_{\mathrm{NN}}^n\|^2\bigr).
\end{align}
Substituting~\eqref{LHS_BDF2_first}, \eqref{LHS_BDF2_rest}, and~\eqref{nonlinear_est} into~\eqref{egy_BDF2_master} yields
\begin{align}\label{egy_BDF2_combined}
\begin{aligned}
&E(u_{\mathrm{NN}}^{n+1})-E(u_{\mathrm{NN}}^n)+\frac{\varepsilon^2}{2}\|\nabla\delta_tu_{\mathrm{NN}}^{n+1}\|^2+\frac{1}{D\Delta t}\|\delta_tu_{\mathrm{NN}}^{n+1}\|_{-1}^2\\
&\quad+\frac{1}{4D\Delta t}\bigl(\|\delta_tu_{\mathrm{NN}}^{n+1}\|_{-1}^2-\|\delta_tu_{\mathrm{NN}}^n\|_{-1}^2+\|\delta_{tt}u_{\mathrm{NN}}^{n+1}\|_{-1}^2\bigr)\\
&\quad+\frac{\kappa}{2}\bigl(\|\delta_tu_{\mathrm{NN}}^{n+1}\|^2-\|\delta_tu_{\mathrm{NN}}^n\|^2+\|\delta_{tt}u_{\mathrm{NN}}^{n+1}\|^2\bigr)\le \frac{L_f}{2}\bigl(\|\delta_{tt}u_{\mathrm{NN}}^{n+1}\|^2+\|\delta_tu_{\mathrm{NN}}^n\|^2\bigr).
\end{aligned}
\end{align}
Finally, applying the inequality~\eqref{interpolation_young} with $a=1/(D\Delta t)$ and $b=\varepsilon^2/2$ yields, under the time step constraint~\eqref{Dt_constraint},
\begin{align}\label{egy_BDF2_young}
\frac{\varepsilon^2}{2}\|\nabla\delta_tu_{\mathrm{NN}}^{n+1}\|^2+\frac{1}{D\Delta t}\|\delta_tu_{\mathrm{NN}}^{n+1}\|_{-1}^2\ge \varepsilon\sqrt{\frac{2}{D\Delta t}}\,\|\delta_tu_{\mathrm{NN}}^{n+1}\|^2\ge \frac{L_f}{2}\|\delta_tu_{\mathrm{NN}}^{n+1}\|^2.
\end{align}
Combining~\eqref{egy_BDF2_combined} and~\eqref{egy_BDF2_young} with $\kappa\ge L_f$, we arrive at~\eqref{egy_BDF2}.
\end{proof}

\begin{remark}$ $\label{rem:nonlinear_egy}
\begin{itemize}
\item[(i)] The proof of Theorem~\ref{thm:egy_BDF2} partially adapts the energy-estimate framework developed in~\cite{Wang18}, which analyzes a more general family of stabilized second-order semi-implicit BDF schemes for the CH equation. Particularly, in addition to the stabilization term $\kappa(u_{\mathrm{NN}}^{n+1}-2u_{\mathrm{NN}}^{n}+u_{\mathrm{NN}}^{n-1})$, the schemes proposed in~\cite{Wang18} incorporate an additional Laplacian-type stabilization of the form $-\hat{\kappa}\Delta t\Delta(u_{\mathrm{NN}}^{n+1}-u_{\mathrm{NN}}^{n})$ with $\hat{\kappa}>0$ in the chemical potential equation. This extra term contributes to the left-hand side of~\eqref{egy_BDF2_combined} an amount of $\hat{\kappa}\Delta t\|\nabla\delta_tu_{\mathrm{NN}}^{n+1}\|^2$, yielding unconditional energy stability with an appropriate choice of $\hat{\kappa}$.
\item[(ii)] If the second-order extrapolation $f(2u_{\mathrm{NN}}^n-u_{\mathrm{NN}}^{n-1})$ in the GTransNet-BDF2 scheme~\eqref{GBDF2} is replaced by the standard linearization $2f(u_{\mathrm{NN}}^n)-f(u_{\mathrm{NN}}^{n-1})$, an analogous energy stability result holds only under the more restrictive time step constraint $\Delta t\le 8\varepsilon^2/(9DL_f^2)$ (cf.~\cite{Shen10,Wang18}). Moreover, this constraint cannot be relaxed by increasing the stabilization constant $\kappa$. The form $f(2u_{\mathrm{NN}}^n-u_{\mathrm{NN}}^{n-1})$ adopted here produces the term $\|\delta_{tt}u_{\mathrm{NN}}^{n+1}\|^2$ in the nonlinear estimate~\eqref{nonlinear_est}, which can be absorbed by the corresponding stabilization term in~\eqref{egy_BDF2_combined} provided that $\kappa\ge L_f$, thereby relaxing the time step constraint by a factor of $9$.
\end{itemize}
\end{remark}

\section{Discrete mass-conserving projection}\label{sec:projection}

While Theorem~\ref{thm:mass} guarantees mass conservation in the space-continuous sense, the fully discrete mass computed from $\bm c_{\mathrm{unc}}^{n+1}=\bm R^{-1}\bm Q^T\bm b^{n+1}$ (cf.~\eqref{eq:normal}) may not be conserved exactly, since the over-determined least-squares system~\eqref{LSQ} generally has nonzero residuals at the collocation points. In this section, we describe a post-processing projection that enforces exact discrete mass conservation with minimal perturbation of the least-squares approximation, and establish the key properties of this projection that justify its use in long-time simulations.

For all numerical computations and for the projection introduced below, we denote by $\mathcal M^n$ an approximation of $\mathcal M(u_{\mathrm{NN}}^n)$, i.e.,
\begin{align*}
\mathcal M(u_{\mathrm{NN}}^n)&\approx\mathcal M^n:=\sum_{q}\omega_q\,u_{\mathrm{NN}}^n(\bm x_q),
\end{align*}
where $\{\bm x_q,\omega_q\}$ denotes a set of quadrature points and weights; in the numerical experiments, we use the tensor-product two-point Gauss-Legendre quadrature rule. Substituting the basis expansion $u_{\mathrm{NN}}^n(\bm x)=\sum_{j=1}^{N_L}\alpha_j^n\phi_j(\bm x)$ into the definition of $\mathcal M^n$ gives the following identity
\begin{center}
$\mathcal M^n=\bm g^T\bm\alpha^n$,\; where $\bm g=(g_1,\ldots,g_{N_L})^T\in\mathbb R^{N_L}$ and $g_j=\displaystyle\sum_q\omega_q\phi_j(\bm x_q),\;1\le j\le N_L$.
\end{center}
With $\hat{\bm g}:=(\bm g^T,\bm 0^T)^T\in\mathbb R^{2N_L}$, the discrete mass-conservation requirement $\mathcal M^{n+1}=\mathcal M^0$ becomes the single linear constraint
\begin{align}\label{cond:constraint}
\hat{\bm g}^T\bm c^{n+1}=\mathcal M^0,\;\text{ where }\bm c^{n+1}=((\bm\alpha^{n+1})^T,(\bm\beta^{n+1})^T)^T.
\end{align}

\subsection{Constrained least-squares formulation}

The unconstrained solution $\bm c_{\mathrm{unc}}^{n+1}$ approximates the discretized PDE in the least-squares sense and satisfies~\eqref{cond:constraint} up to a small residual at the level of the least-squares error. We therefore correct $\bm c_{\mathrm{unc}}^{n+1}$ by the smallest perturbation, in the weighted norm induced by $\bm R^T\bm R$, that exactly restores~\eqref{cond:constraint}:
\begin{align}\label{mass_minimization}
\bm c^{n+1}=\arg\min_{\bm c\in\mathbb R^{2N_L}}\|\bm c-\bm c_{\mathrm{unc}}^{n+1}\|_{\bm R^T\bm R}^2\quad\text{subject to}\quad \hat{\bm g}^T\bm c=\mathcal M^0,
\end{align}
where $\|\bm v\|_{\bm R^T\bm R}^2=\bm v^T\bm R^T\bm R\bm v=\|\bm R\bm v\|_2^2$ for all $\bm v\in\mathbb R^{2N_L}$. The choice of the $\bm R^T\bm R$-norm is natural for least-squares problems: $\|\bm R(\bm c-\bm c_{\mathrm{unc}}^{n+1})\|_2$ measures exactly the increase in the least-squares residual norm caused by the correction, so among all feasible coefficient vectors the minimizer $\bm c^{n+1}$ is the one that least perturbs the discretized-PDE fit. The associated Lagrangian
\begin{align*}
\mathcal L(\bm c,\lambda)=\frac{1}{2}\|\bm c-\bm c_{\mathrm{unc}}^{n+1}\|_{\bm R^T\bm R}^2+\lambda\bigl(\hat{\bm g}^T\bm c-\mathcal M^0\bigr)
\end{align*}
yields the Karush-Kuhn-Tucker (KKT) system $\bm R^T\bm R(\bm c-\bm c_{\mathrm{unc}}^{n+1})+\lambda\hat{\bm g}=\bm 0$ together with the mass constraint $\hat{\bm g}^T\bm c=\mathcal M^0$, from which the closed form
\begin{align}\label{mass_proj}
\bm c^{n+1}=\bm c_{\mathrm{unc}}^{n+1}-\frac{\hat{\bm g}^T\bm c_{\mathrm{unc}}^{n+1}-\mathcal M^0}{\hat{\bm g}^T\bm z}\,\bm z,\qquad \bm z:=(\bm R^T\bm R)^{-1}\hat{\bm g},
\end{align}
follows. In~\eqref{mass_proj}, the vector $\bm z\in\mathbb R^{2N_L}$ is computed once for all time steps by two triangular solves (one with $\bm R^T$ and one with $\bm R$); and the projection then requires only one inner product and a scalar correction along $\bm z$, adding $\mathcal O(N_L)$ operations per time step. Since this cost is dominated by the matrix-vector product $\bm Q^T\bm b^{n+1}$ and back substitution when computing $\bm c_{\mathrm{unc}}^{n+1}$, the projection adds negligible computational cost.

\subsection{Properties of the projection}\label{subsec:proj_props}

Let
$
H_{\mathcal M^0}:=\bigl\{\bm c\in\mathbb R^{2N_L}:\hat{\bm g}^T\bm c=\mathcal M^0\bigr\}
$
denote the affine hyperplane of coefficient vectors that satisfy the discrete mass constraint~\eqref{cond:constraint}, and define the projection operator $\mathcal P:\mathbb R^{2N_L}\to H_{\mathcal M^0}$ by
\begin{align}\label{proj_op}
\mathcal P(\bm c):=\bm c-\frac{\hat{\bm g}^T\bm c-\mathcal M^0}{\hat{\bm g}^T\bm z}\,\bm z,\quad \forall\,\bm c\in \mathbb R^{2N_L},
\end{align}
so that $\bm c^{n+1}=\mathcal P(\bm c_{\mathrm{unc}}^{n+1})$ (cf.~\eqref{mass_proj}). Note that $\hat{\bm g}^T\bm z=\hat{\bm g}^T(\bm R^T\bm R)^{-1}\hat{\bm g}>0$ since $\bm R^T\bm R$ is symmetric positive definite and $\hat{\bm g}\ne\bm 0$, so $\mathcal P$ is well defined. The map $\mathcal P$ possesses the following properties:

\paragraph{(P1) Exact discrete mass conservation} A direct computation gives
\begin{align*}
\hat{\bm g}^T\mathcal P(\bm c)=\hat{\bm g}^T\bm c-(\hat{\bm g}^T\bm c-\mathcal M^0)=\mathcal M^0,\quad\forall\,\bm c\in\mathbb R^{2N_L}.
\end{align*}
Therefore, the projected coefficient vector $\bm c^{n+1}$ satisfies mass conservation $\mathcal M^{n+1}=\hat{\bm g}^T\bm c^{n+1}=\hat{\bm g}^T\mathcal P(\bm c_{\mathrm{unc}}^{n+1})=\mathcal M^0$ at every time step.

\paragraph{(P2) Idempotency} If $\bm c\in H_{\mathcal M^0}$, then $\hat{\bm g}^T\bm c=\mathcal M^0$, so $\mathcal P(\bm c)=\bm c$ due to~\eqref{proj_op}. Since $\mathcal P(\bm c)\in H_{\mathcal M^0}$ for all $\bm c\in \mathbb R^{2N_L}$ by (P1), we have $\mathcal P\circ\mathcal P=\mathcal P$.

\paragraph{(P3) Non-expansiveness in the $\bm R^T\bm R$-norm} For all $\bm c,\bm c'\in\mathbb R^{2N_L}$,
\begin{align}\label{P_nonexp}
\|\mathcal P(\bm c)-\mathcal P(\bm c')\|_{\bm R^T\bm R}\le \|\bm c-\bm c'\|_{\bm R^T\bm R}.
\end{align}
Indeed, $\mathcal P(\bm c)-\mathcal P(\bm c')$ is the $\bm R^T\bm R$-orthogonal projection of $\bm c-\bm c'$ onto the linear subspace $\hat{\bm g}^\perp=\{\bm v\in\mathbb R^{2N_L}:\hat{\bm g}^T\bm v=0\}$, and the standard Pythagorean identity for orthogonal projections gives
\begin{align*}
\|\bm c-\bm c'\|_{\bm R^T\bm R}^2=\|\mathcal P(\bm c)-\mathcal P(\bm c')\|_{\bm R^T\bm R}^2+\frac{\bigl(\hat{\bm g}^T(\bm c-\bm c')\bigr)^2}{\hat{\bm g}^T\bm z}.
\end{align*}
In particular, taking $\bm c=\bm c_{\mathrm{unc}}^{n+1}$ and $\bm c'=\bm c^n$ in~\eqref{P_nonexp}, noting that $\bm c^n=\mathcal P(\bm c_{\mathrm{unc}}^{n})\in H_{\mathcal M^0}$ by (P1) and $\mathcal P(\bm c^n)=\bm c^n$ by (P2), we obtain
$$\|\bm c^{n+1}-\bm c^n\|_{\bm R^T\bm R}\le \|\bm c_{\mathrm{unc}}^{n+1}-\bm c^n\|_{\bm R^T\bm R}.$$

\paragraph{(P4) Bounded perturbation} Substituting~\eqref{mass_proj} into the objective function of~\eqref{mass_minimization} and using the identity $\bm z^T\bm R^T\bm R\bm z=\hat{\bm g}^T\bm z$ (which follows from $\bm z=(\bm R^T\bm R)^{-1}\hat{\bm g}$), the optimal value is
\begin{align*}
\|\bm c^{n+1}-\bm c_{\mathrm{unc}}^{n+1}\|_{\bm R^T\bm R}^2=\frac{\bigl(\hat{\bm g}^T\bm c_{\mathrm{unc}}^{n+1}-\mathcal M^0\bigr)^2}{\hat{\bm g}^T\bm z},
\end{align*}
i.e., the perturbation $\|\bm c^{n+1}-\bm c_{\mathrm{unc}}^{n+1}\|_{\bm R^T\bm R}$ is proportional to the absolute mass error $|\hat{\bm g}^T\bm c_{\mathrm{unc}}^{n+1}-\mathcal M^0|$ of the unconstrained solution, and vanishes when $\bm c_{\mathrm{unc}}^{n+1}$ already satisfies the mass constraint.

\medskip
Together, properties (P1)--(P4) show that the projection~\eqref{mass_proj} enforces exact discrete mass conservation while introducing the smallest possible correction in the $\bm R^T\bm R$-norm. In contrast to a global a posteriori approach that uniformly shifts the solution to match the target mass and ignores the PDE residual, the proposed projection~\eqref{mass_proj} corrects $\bm c_{\mathrm{unc}}^{n+1}$ along the optimal direction $\bm z$ that minimizes the disturbance to this residual. The projection is applied at every time step in the numerical experiments of Section~\ref{sec:num}. We refer to~\cite{Li24AC} for a maximum-bound-principle-preserving and mass-conservative projection method for the conservative Allen–Cahn equation and to~\cite{Qiao25CHsurf} for a mass-projection strategy for the CH equation on surfaces.

\section{Numerical experiments}\label{sec:num}
In this section, we demonstrate the performance of the proposed GTransNet-BDF2 scheme~\eqref{GBDF2} coupled with the mass-conserving projection (cf. Section~\ref{sec:projection}) on a range of test cases. First, for the two-dimensional CH equation, we verify the temporal convergence of the scheme via a manufactured solution. Through the shape relaxation and coarsening dynamics tests, we then illustrate that the scheme reproduces the expected evolution while preserving mass conservation and energy dissipation. The method is further applied to solve the CH equation on irregular domains, including circular and amoeba-shaped ones. Second, for the three-dimensional CH equation, we confirm the temporal convergence of the scheme and perform a coarsening dynamics test, which demonstrates its robustness in preserving the two intrinsic properties over long-time simulations. Finally, we extend the GTransNet-BDF framework to the CH equation with variable mobility (see~\ref{sec:appendix_varmob} for the detailed formulation and implementation) and show its numerical performance for the degenerate mobility $M(u)=|1-u^2|$.

Unless otherwise stated, we consider the standard double-well potential $F(u)=\tfrac{1}{4}(u^2-1)^2$, with $f(u)=F'(u)=u^3-u$ and wells at $u=\pm 1$. The interior collocation points $\{\bm x_i\}_{i=1}^{K_{\mathrm{int}}}$ are chosen according to the boundary condition: a uniform vertex grid on $\overline{\Omega}$ in the periodic case, and the cell centers of a uniform partition of $\Omega$ in the homogeneous Neumann counterpart; for irregular domains, we apply the same uniform partition to a bounding box of $\Omega$ and retain only the cell centers inside $\Omega$. The boundary collocation points $\{\bm x_i^{\mathrm{bd}}\}_{i=1}^{K_{\mathrm{bd}}}$ are equally spaced for rectangular domains, and are placed at equally spaced parameter values along the parametrized boundaries of irregular domains. In addition, the GTransNet hidden-layer neurons are generated on a covering ball $B_R(\bm x_c)\supset\Omega$ (cf.~Remark~\ref{rem:affine_shift}), where $\bm x_c$ is the center of $\Omega$ (or of its bounding box), and the radius $R$, specified for each example, is chosen so that $B_R(\bm x_c)$ is slightly larger than $\Omega$.

We employ the GTransNet basis with $L=2$ hidden layers in most test cases. As discussed in~\cite{Cheng26}, this choice achieves a favorable balance between accuracy and computational efficiency. Adding more hidden layers (e.g., $L=3$) may offer some accuracy improvements, but at the expense of increased computational cost in assembling the least-squares system due to the evaluation of higher-order derivatives via the chain rule. The case $L=3$ is included only in the selected convergence tests for comparison.

\subsection{Two-dimensional Cahn-Hilliard equation}\label{subsec:num_2D}

\subsubsection{Convergence test}
We set the domain $\Omega=(-0.5,0.5)^2$ and the terminal time $T=1$. By adding an external forcing term to the right-hand side of the first equation in~\eqref{eq:CHmix_kappa}, we take the exact solution to be
\begin{align*}
    u(x,y,t)=\sin(2\pi x)\cos(2\pi y)\cos(t),
\end{align*}
subject to the periodic boundary conditions. We fix $D=0.5$, $\kappa=0$, $\delta=0.5$, $R=0.75$, and vary the interfacial width $\varepsilon\in\{0.2,0.1\}$. The collocation points are sampled from a uniform $n_x\times n_y$ mesh, and the test data consist of $4n_xn_y$ uniformly distributed random points in $\Omega$. The parameters used for each value of $\varepsilon$ are listed in Table~\ref{table:params}.

\begin{table}[!ht]
    \centering
    \small
    \setlength{\extrarowheight}{3pt}
\subcaptionbox{$L=2$ hidden layers}[.49\linewidth]{
    \begin{tabular}{|c|c|c|c|c|c|}
    \hline
    $\varepsilon$ & $N_1$ & $N_2$ & $\gamma$ & $n_x=n_y$ \\
    \hline
    $0.2$ & $600$ & $500$ & $3$ & $50$ \\
    $0.1$ & $1500$&$1000$&  $4$   &  $80$\\
    \hline
    \end{tabular}
}\hspace{.05cm}
\subcaptionbox{$L=3$ hidden layers}[.49\linewidth]{
    \begin{tabular}{|c|c|c|c|c|c|c|}
    \hline
    $\varepsilon$  & $N_1$ & $N_2$ & $N_3$ & $\gamma$ & $n_x=n_y$ \\
    \hline
    $0.2$ &600  &500  &500  &2.8  &50  \\
    $0.1$ &1500  & 1000 & 1000 & 4 & 80 \\
    \hline
    \end{tabular}
}
    \caption{[2D convergence test] Parameter settings for $\varepsilon\in\{0.2,0.1\}$.}
    \label{table:params}
\end{table}

Table~\ref{table:convergence} presents $L^{\infty}$ errors of the phase variable at the final time by the GTransNet-BDF2 scheme with $L=2$ and $L=3$ hidden layers for $\varepsilon\in\{0.2,0.1\}$. In all cases, the scheme achieves the expected second-order convergence in time. Moreover, the errors for $L=2$ and $L=3$ are nearly identical, indicating that the spatial approximation error with two hidden layers is already negligible relative to the temporal error. We note that similar convergence results hold for the homogeneous Neumann boundary conditions; a corresponding three-dimensional convergence test under the same boundary conditions is presented in Section~\ref{subsec:num_3D}.

\begin{table}[!ht]
    \centering
    \small
    \setlength{\extrarowheight}{3pt}
\subcaptionbox{$\varepsilon=0.2$}[.49\linewidth]{
\resizebox{.49\textwidth}{!}{
\begin{tabular}{|c|c|c|}
\hline
\multirow{2}{*}{$\Delta t$} & GTransNet-BDF2 & GTransNet-BDF2 \\
 & $L=2$ hidden layers & $L=3$ hidden layers\\ \hline
$1/10$  &1.01e-03 	   &1.01e-03 \\
$1/20$  &2.40e-04 [2.07]&2.40e-04 [2.07]\\
$1/40$  &5.82e-05 [2.04]&5.83e-05 [2.04]\\
$1/80$  &1.44e-05 [2.02]&1.44e-05 [2.02]\\
$1/160$ &3.60e-06 [2.00]&3.64e-06 [1.98]\\
$1/320$ &9.09e-07 [1.99]&9.32e-07 [1.97]\\
\hline
\end{tabular}
}
}\hspace{.05cm}
\subcaptionbox{$\varepsilon=0.1$}[.49\linewidth]{
\resizebox{.49\textwidth}{!}{
\begin{tabular}{|c|c|c|}
\hline
\multirow{2}{*}{$\Delta t$} & GTransNet-BDF2 & GTransNet-BDF2 \\
 & $L=2$ hidden layers & $L=3$ hidden layers \\ \hline
$1/10$  &5.87e-03&5.87e-03\\
$1/20$  &1.54e-03 [1.93]&1.54e-03 [1.96]\\
$1/40$  &3.84e-04 [2.00]&3.84e-04 [2.00]\\
$1/80$  &9.54e-05 [2.01]&9.54e-05 [2.01]\\
$1/160$ &2.37e-05 [2.01]&2.38e-05 [2.00]\\
$1/320$ &6.08e-06 [1.96]&6.18e-06 [1.95]\\
\hline
\end{tabular}
}
}
    \caption{[2D convergence test] $L^{\infty}$ errors of the phase variable at the final time for $\varepsilon\in\{0.2,0.1\}$.}
    \label{table:convergence}
\end{table}

\subsubsection{Shape relaxation}
We next apply the GTransNet-BDF2 scheme with 2 hidden layers to a shape relaxation test case~\cite{Ham22,Doan25b} under homogeneous Neumann boundary conditions. We set $\Omega=(0,1)^2$, $D=0.05$, $\kappa=2$, $\delta=0.5$, $R=0.75$, and $\varepsilon\in\{0.02,0.01\}$. The initial condition has a value of 1 inside the square $[0.25,0.75]^2$ and $-1$ outside.

For $\varepsilon=0.02$, we run the simulation until $T=1.5$ using $N_1=2000$, $N_2=1000$, $\gamma=12$, $\Delta t=\num{1e-2}$, $K_{\mathrm{int}}=150^2$ interior and $K_{\mathrm{bd}}=600$ boundary collocation points. For $\varepsilon=0.01$, we run until $T=3$ with $N_1=3000$, $N_2=1500$, $\gamma=16$, $\Delta t=\num{5e-3}$, $K_{\mathrm{int}}=200^2$, and $K_{\mathrm{bd}}=800$.

\begin{figure}[!ht]
    \centering
\includegraphics[width=0.3\linewidth]{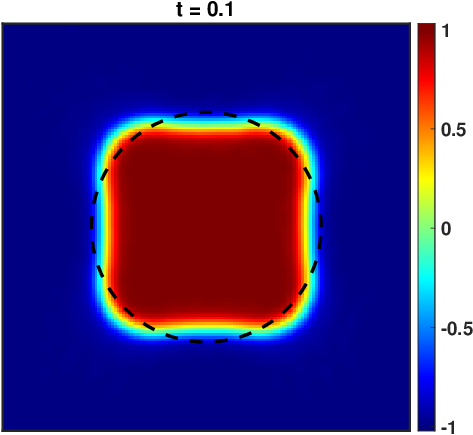}\qquad
\includegraphics[width=0.3\linewidth]{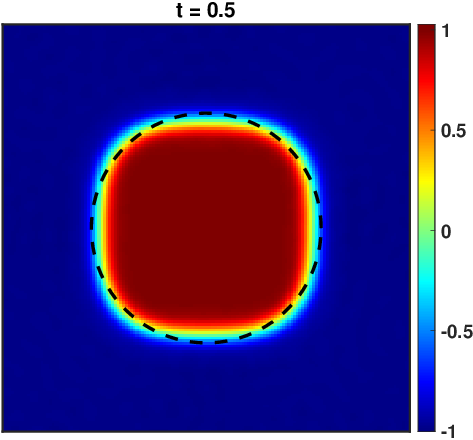}\qquad
\includegraphics[width=0.3\linewidth]{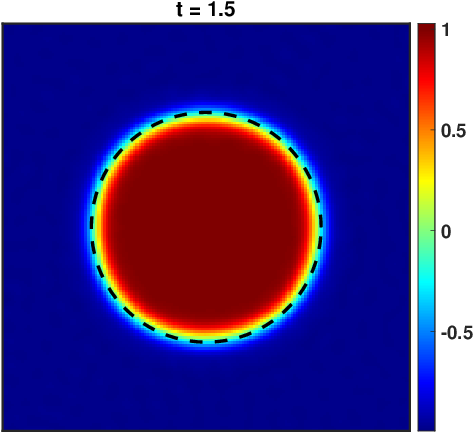}

\vspace{.4cm}

\includegraphics[width=0.3\linewidth]{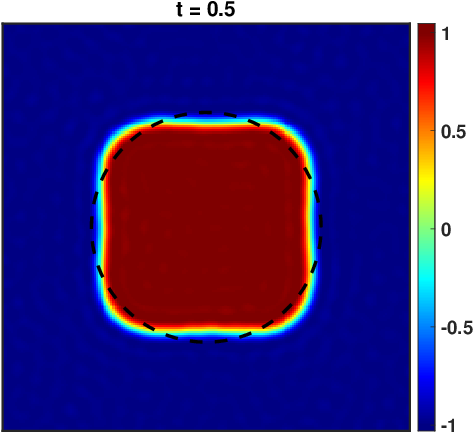}\qquad
\includegraphics[width=0.3\linewidth]{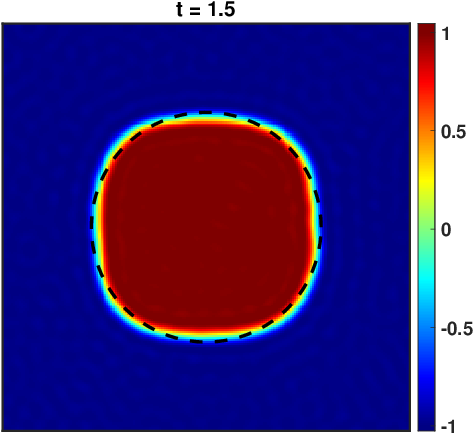}\qquad
\includegraphics[width=0.3\linewidth]{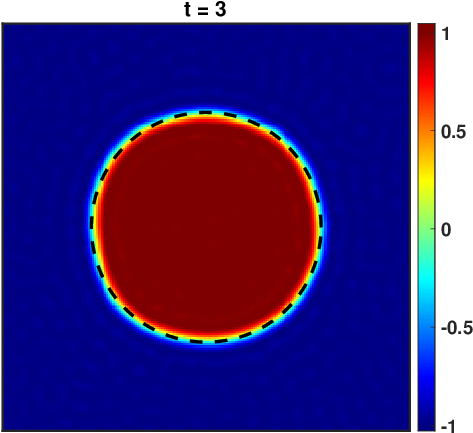}
    \caption{[Shape relaxation] Snapshots of the phase variable by the GTransNet-BDF2 scheme. Top: $\varepsilon=0.02$ at $t=0.1, 0.5$, and $1.5$. Bottom: $\varepsilon=0.01$ at $t=0.5, 1.5$, and $3$.}
    \label{fig:relax_snapshots}
\end{figure}

\begin{figure}[!ht]
    \centering
\includegraphics[width=0.35\linewidth]{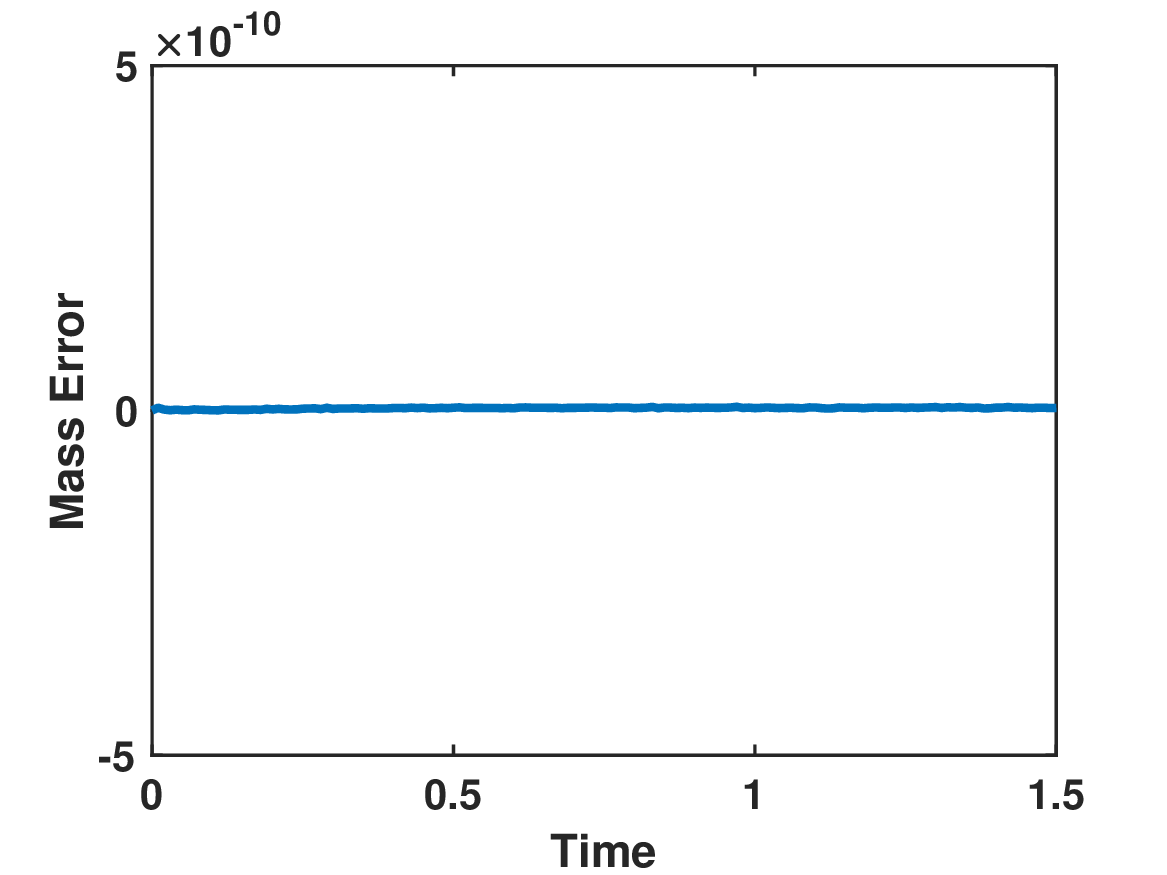}
\includegraphics[width=0.35\linewidth]{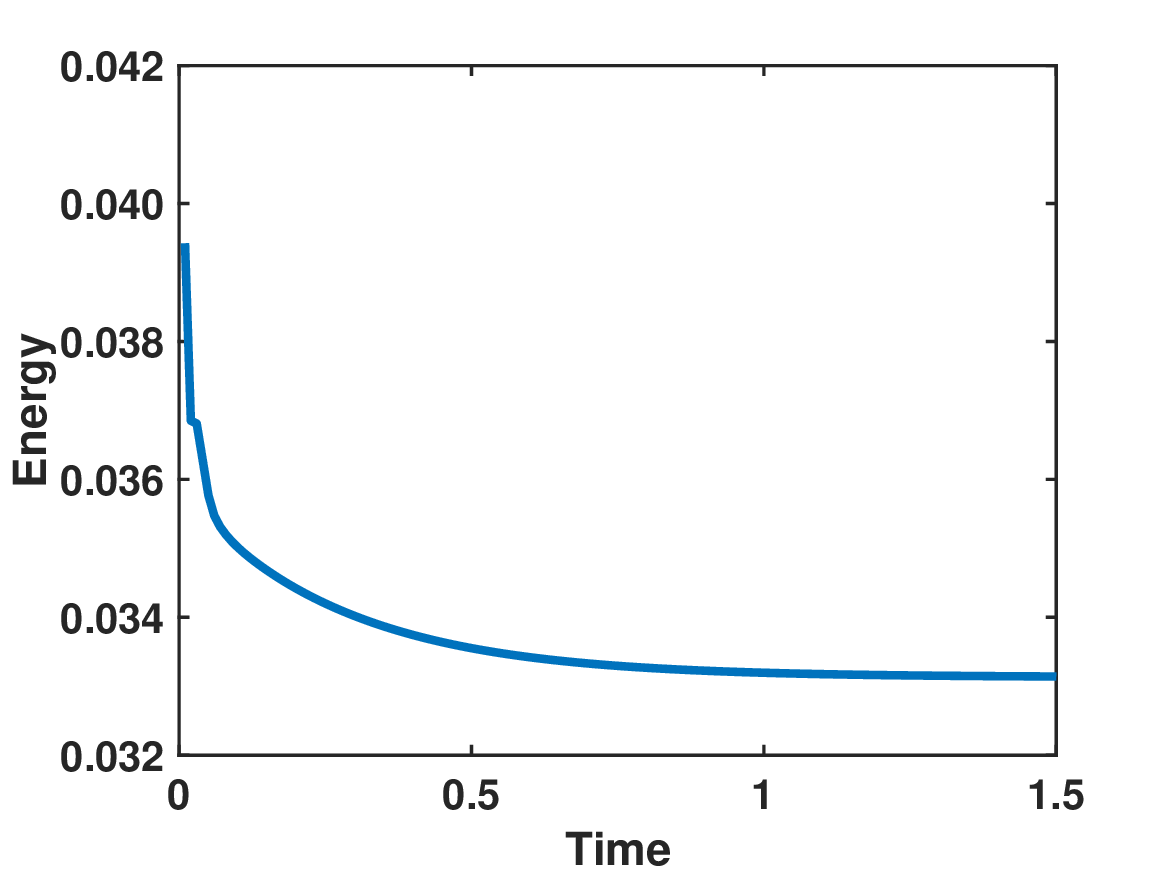}

\includegraphics[width=0.35\linewidth]{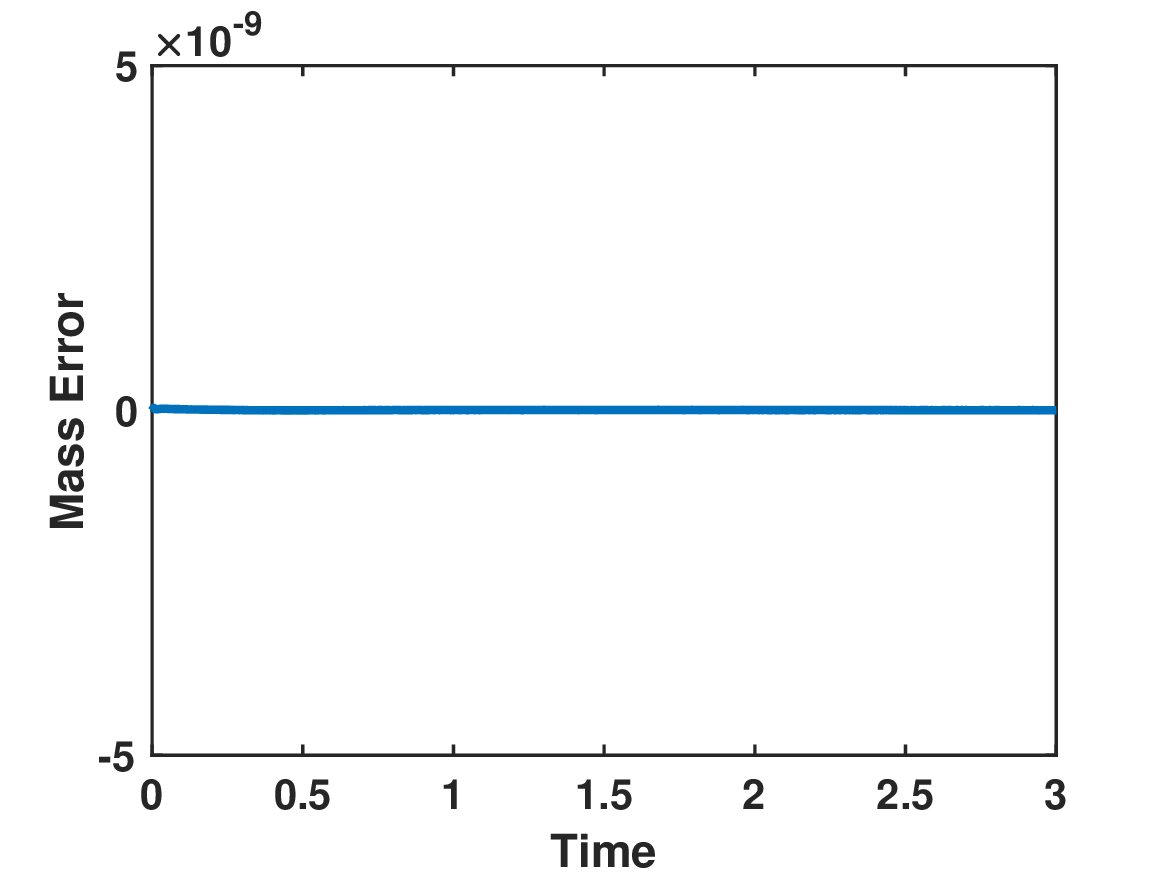}
\includegraphics[width=0.35\linewidth]{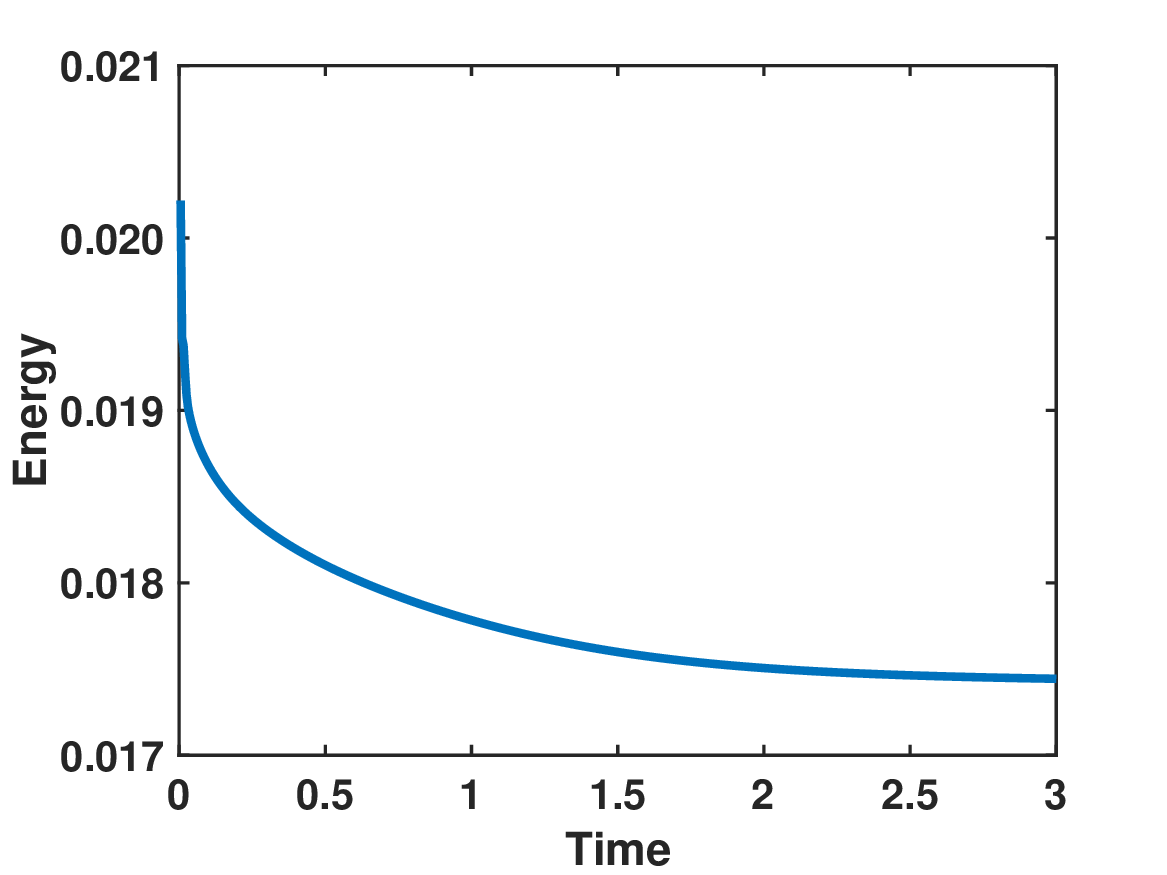}
    \caption{[Shape relaxation] Evolution of the absolute mass error and free energy by the GTransNet-BDF2 scheme with $\varepsilon=0.02$ (top) and $\varepsilon=0.01$ (bottom).}
    \label{fig:relax_mass_energy}
\end{figure}

Snapshots of the phase variable at different times for $\varepsilon\in\{0.02,0.01\}$ are shown in Figure~\ref{fig:relax_snapshots}, where the dashed circles, centered at $(0.5,0.5)$ with radius $\frac{1}{2\sqrt{\pi}}$, indicate the interface at steady state. We observe that the initial square gradually transitions into a circle that closely matches the dashed reference, with the circular shape forming at an earlier stage for $\varepsilon = 0.02$ than for $\varepsilon = 0.01$. Figure~\ref{fig:relax_mass_energy} confirms mass conservation and energy dissipation of the numerical solutions, with the former ensured by the projection introduced in Section~\ref{sec:projection}.

\subsubsection{Coarsening dynamics}\label{subsec:coarsening}
We simulate the long-time coarsening dynamics of the phase separation process with different volume fractions in $\Omega=(0,1)^2$ under homogeneous Neumann boundary conditions. The initial condition is a small random perturbation of the uniform state
$$u_0(x,y) = \overline{u}+0.05\,\mathrm{rand}(x,y),$$
where $\mathrm{rand}(x,y)$ is uniformly distributed in $[-1,1]$. We fix $\varepsilon=0.02$, $D=0.05$, $\kappa=2$, $\delta=0.5$, $R=0.75$, $K_{\mathrm{int}}=250^2$, $K_{\mathrm{bd}}=1000$, and consider $\overline{u}\in\{0,0.5\}$. The simulation is run until $T=40$ using the GTransNet-BDF2 scheme with $N_1=2000$, $N_2=1000$, $\gamma=12$, $\Delta t=\num{1e-3}$ for $\overline{u}=0$, and $N_1=3000$, $N_2=1500$, $\gamma=14$, $\Delta t=\num{5e-3}$ for $\overline{u}=0.5$.

\begin{figure}[!ht]
    \centering
\includegraphics[width=0.3\linewidth]{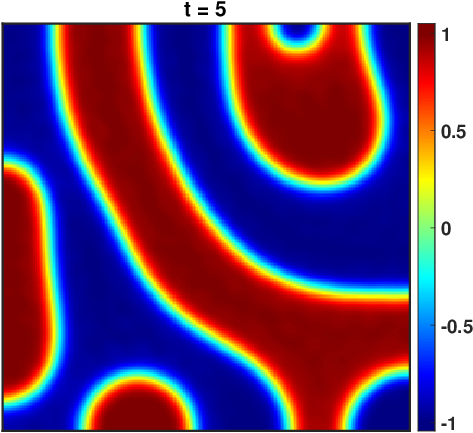}\qquad
\includegraphics[width=0.3\linewidth]{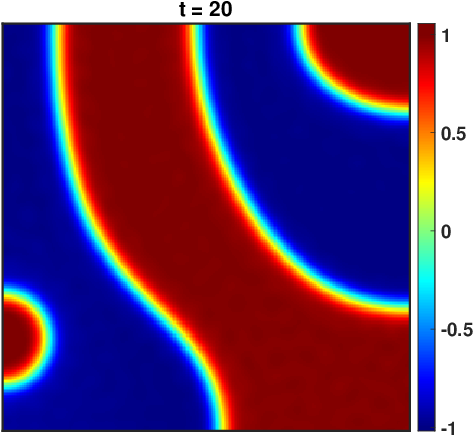}\qquad
\includegraphics[width=0.3\linewidth]{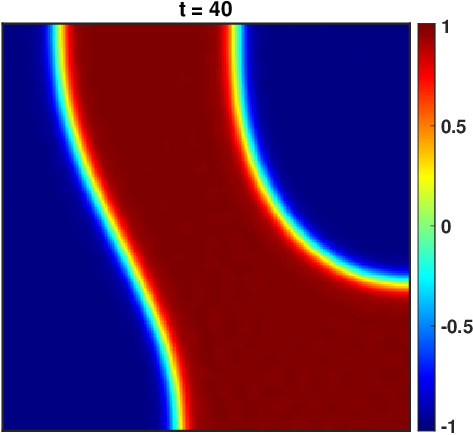}

\vspace{.4cm}

\includegraphics[width=0.3\linewidth]{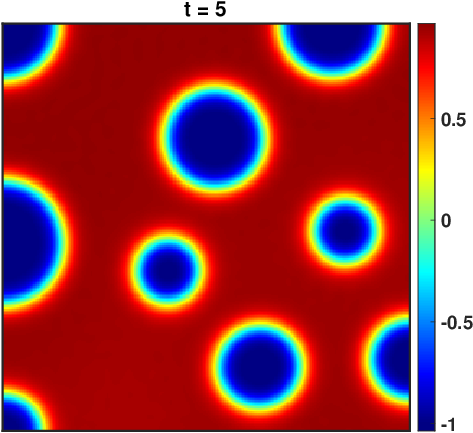}\qquad
\includegraphics[width=0.3\linewidth]{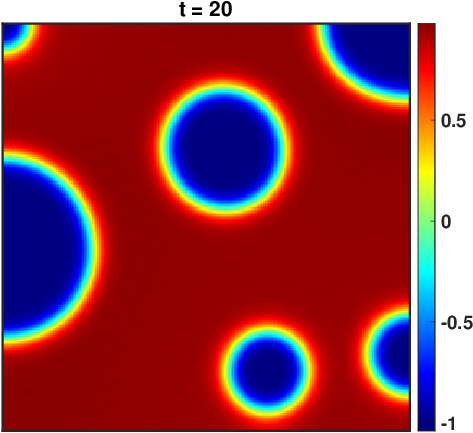}\qquad
\includegraphics[width=0.3\linewidth]{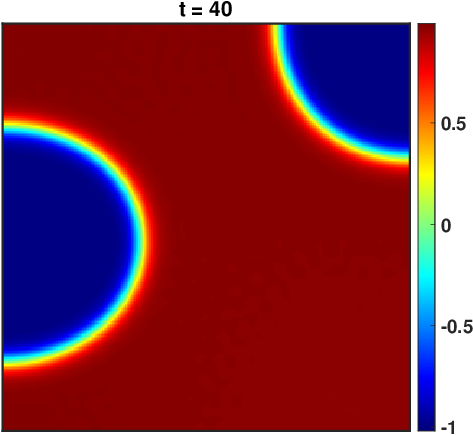}
    \caption{[2D coarsening dynamics] Snapshots of the phase variable by the GTransNet-BDF2 scheme with $\overline{u}=0$ (top) and $\overline{u}=0.5$ (bottom) at $t=5,20$, and $40$.}
    \label{fig:coarsening_eps002_snapshots}
\end{figure}

\begin{figure}[!ht]
    \centering
\includegraphics[width=0.35\linewidth]{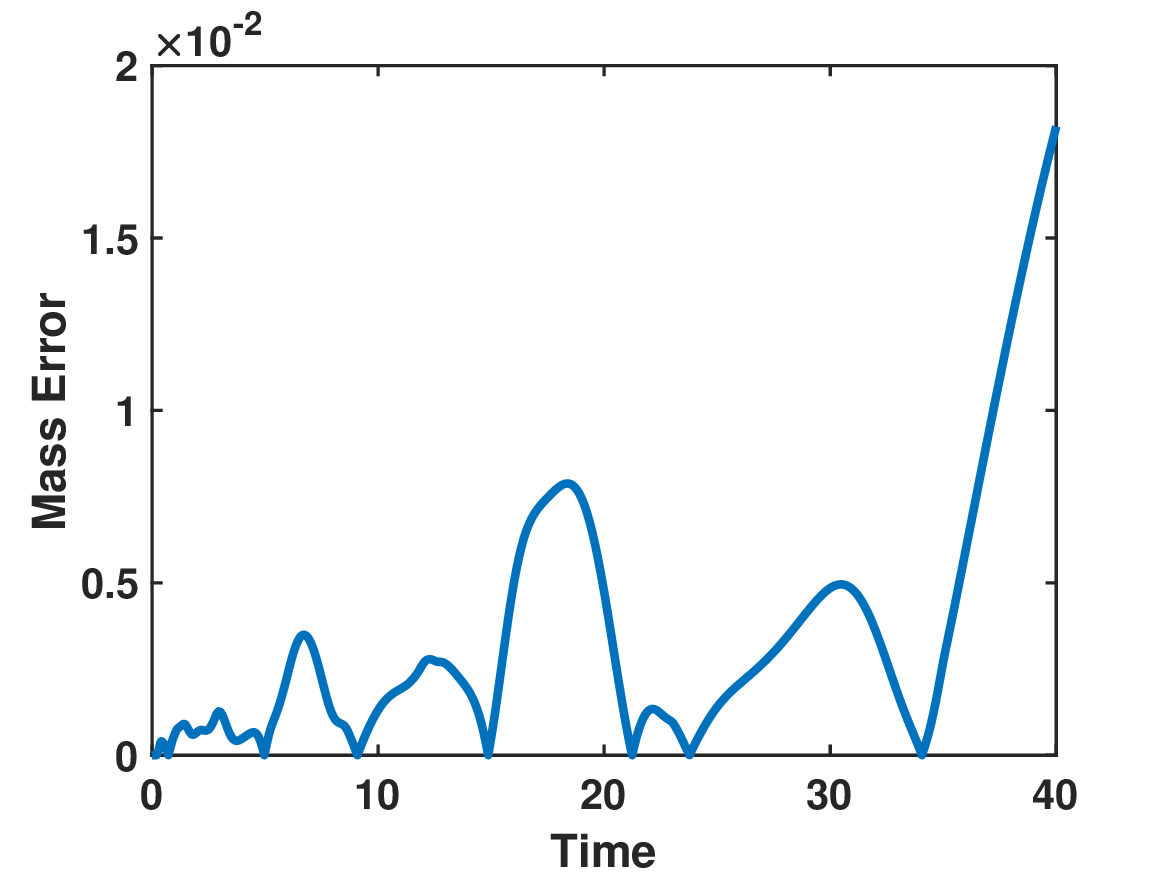}
\includegraphics[width=0.35\linewidth]{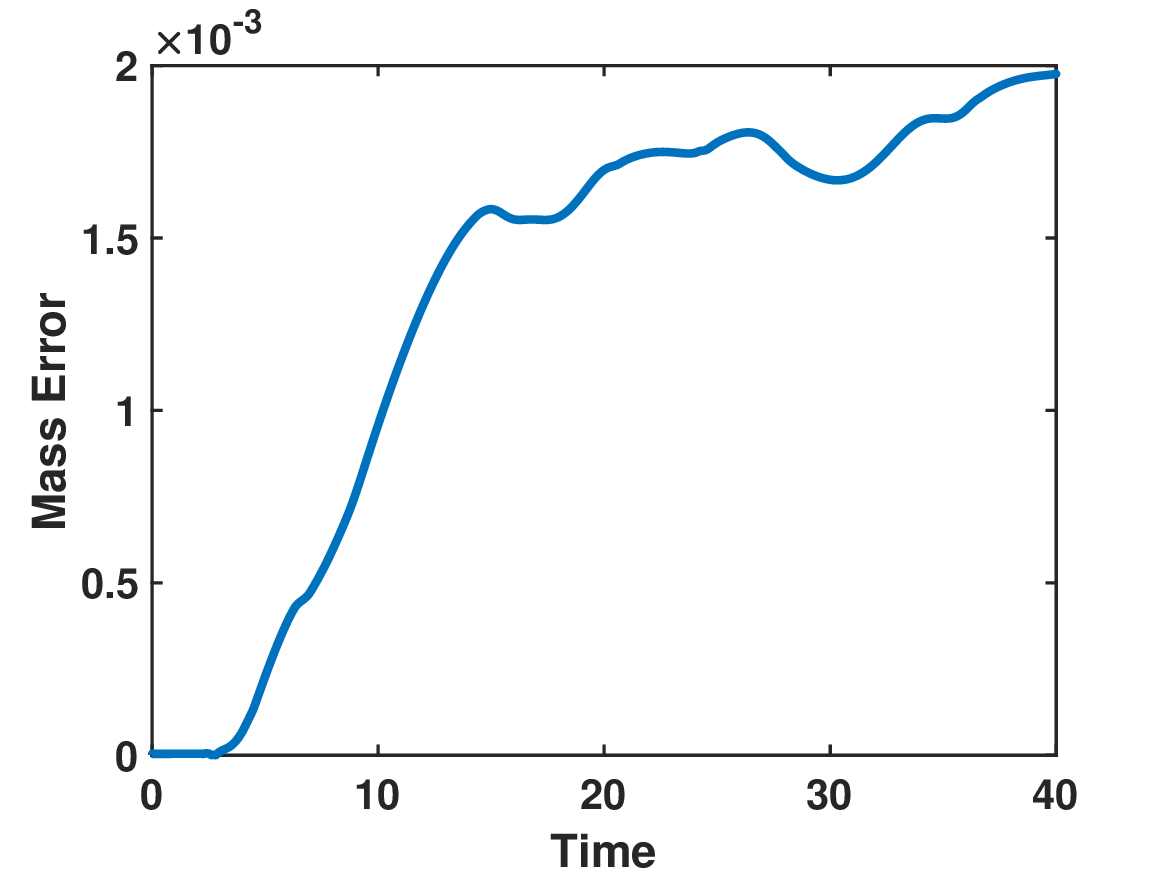}

\includegraphics[width=0.35\linewidth]{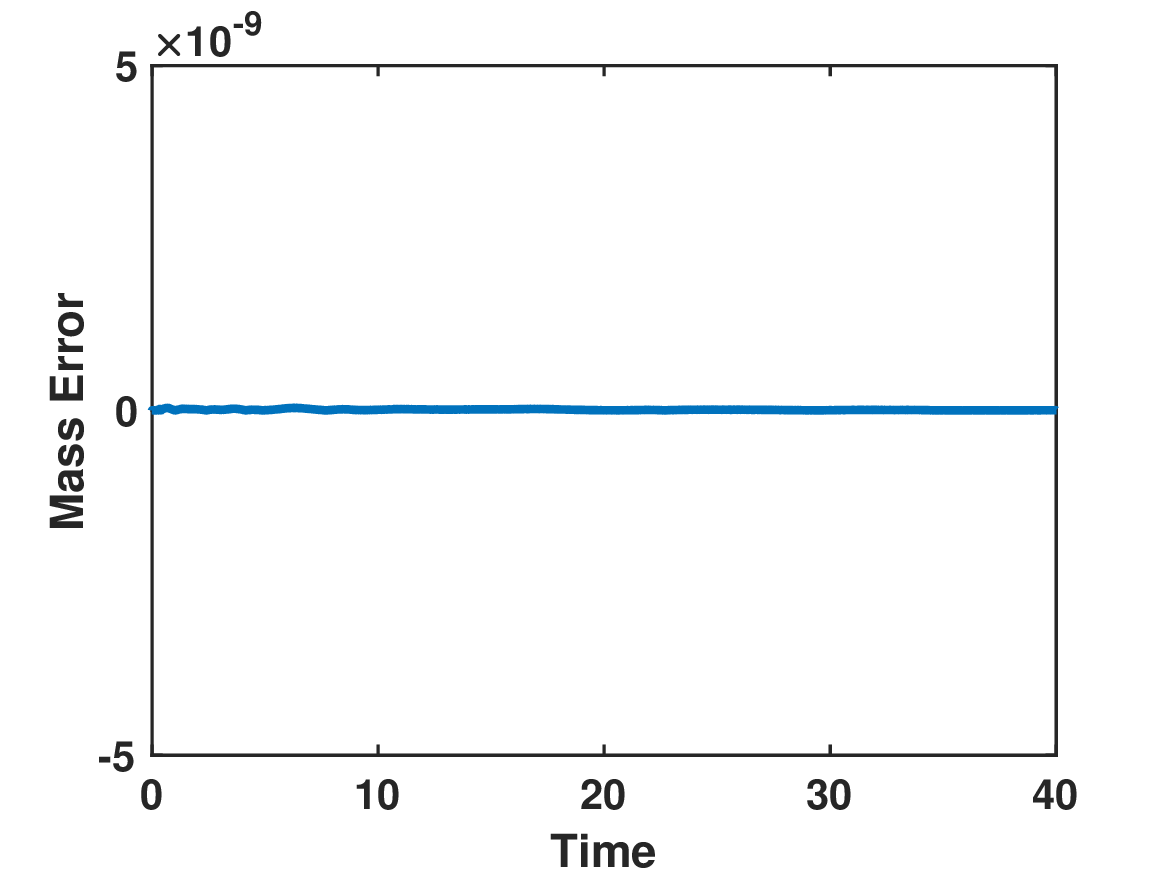}
\includegraphics[width=0.35\linewidth]{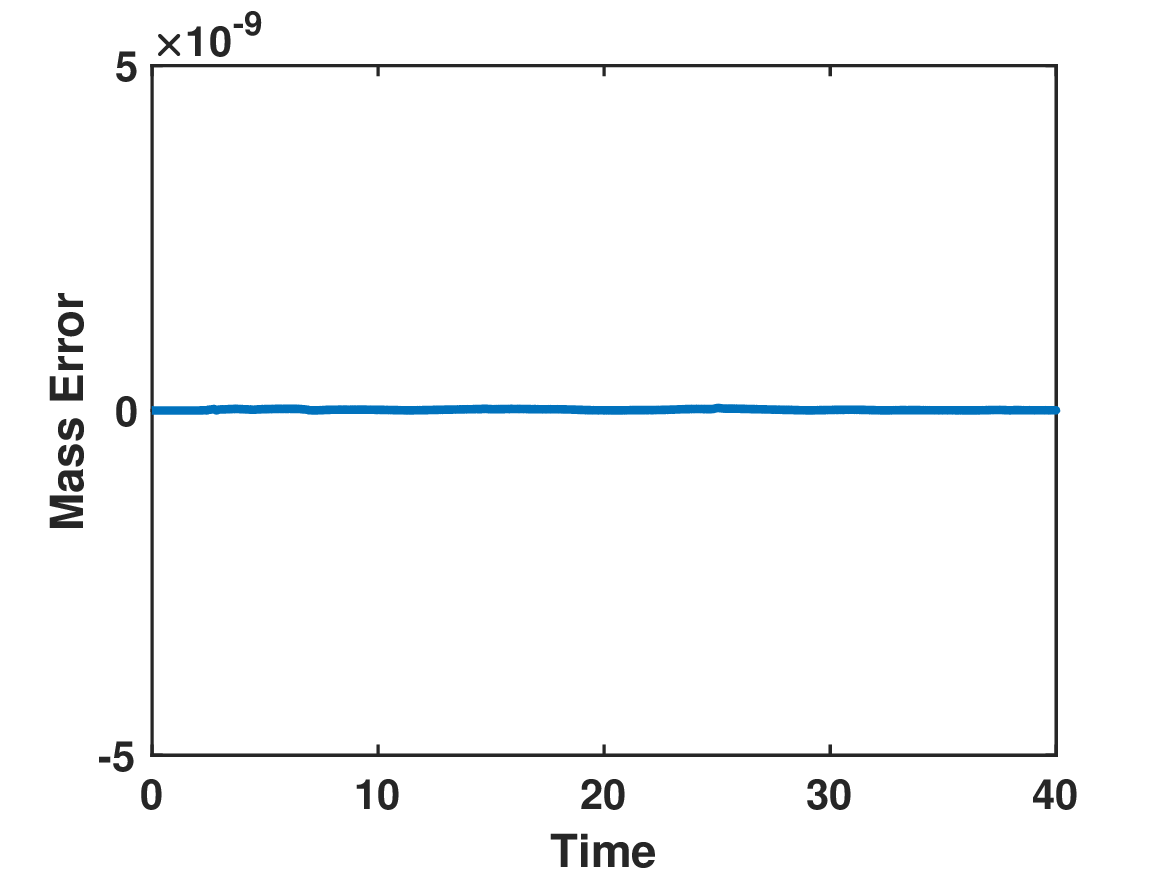}
    \caption{[2D coarsening dynamics] Evolution of the absolute mass error by the GTransNet-BDF2 scheme without the mass-conserving projection (top) and with the projection (bottom), for $\overline{u}=0$ (left) and $\overline{u}=0.5$ (right).}
    \label{fig:coarsening_eps002_mass}
\end{figure}

\begin{figure}[!ht]
    \centering
\includegraphics[width=0.35\linewidth]{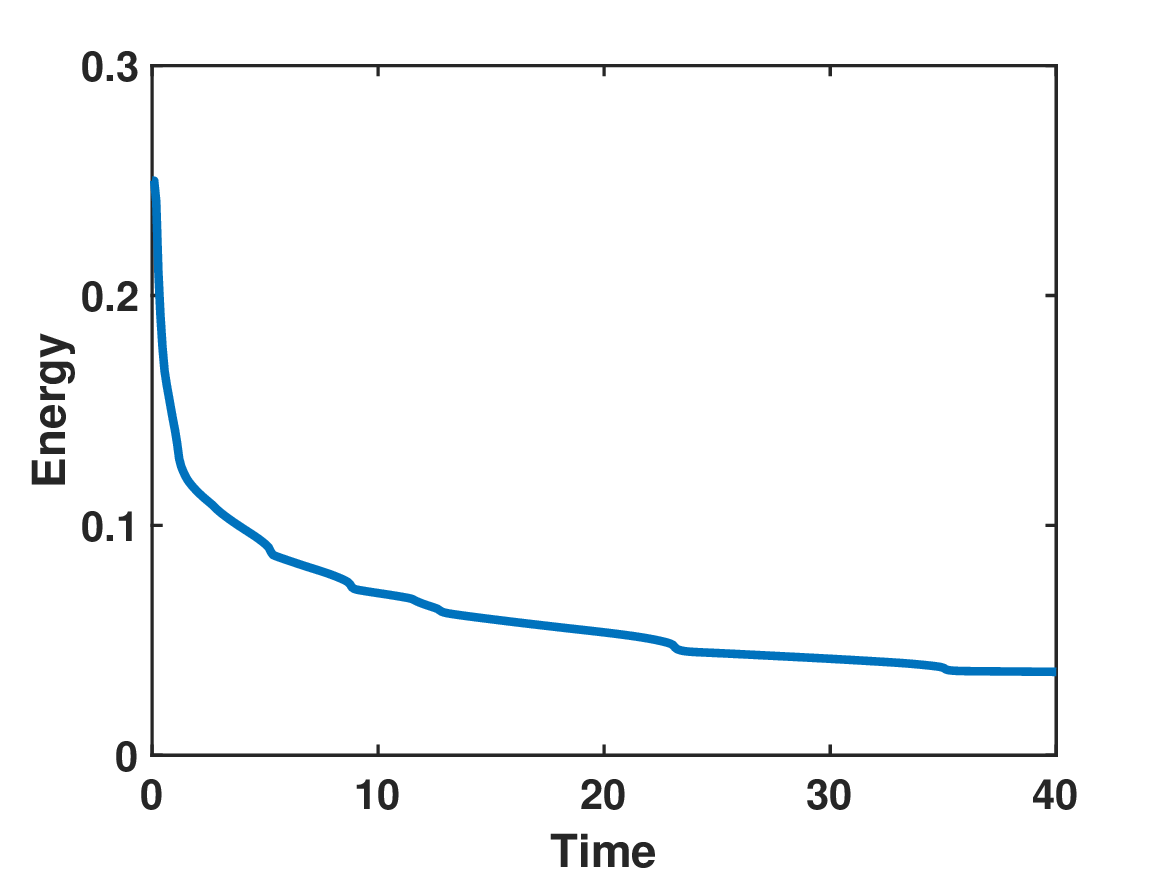}
\includegraphics[width=0.35\linewidth]{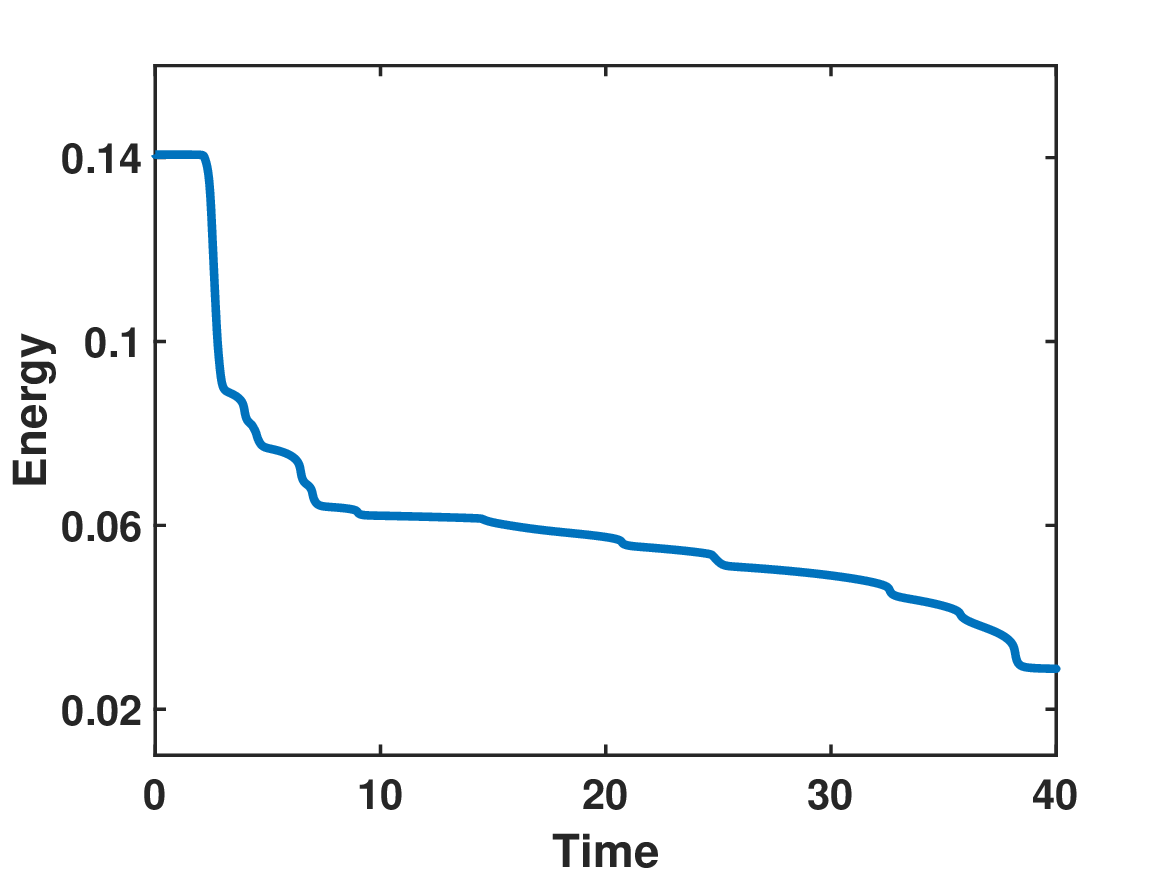}
    \caption{[2D coarsening dynamics] Evolution of the free energy by the GTransNet-BDF2 scheme with $\overline{u}=0$ (left) and $\overline{u}=0.5$ (right).}
    \label{fig:coarsening_eps002_energy}
\end{figure}

Evolution of the phase variable at $t=5,20$, and $40$ for $\overline{u}\in\{0,0.5\}$ is reported in Figure~\ref{fig:coarsening_eps002_snapshots}. As expected, for $\overline u=0$, the two phases separate into large interconnected regions, whereas for $\overline{u}=0.5$, the minority phase forms isolated circular droplets within the majority phase. To emphasize the importance of the proposed projection, we additionally run the simulation by the GTransNet-BDF2 scheme without the projection, using the same parameters as above, and report the evolution of the absolute mass error for both cases in Figure~\ref{fig:coarsening_eps002_mass}. Without the projection, the mass is clearly not conserved and the error tends to grow over time. With the projection, the mass is conserved throughout the entire simulation for both volume fractions, and the free energy, shown in Figure~\ref{fig:coarsening_eps002_energy}, decays monotonically.

\subsubsection{Cahn-Hilliard equation on irregular domains}
A key advantage of the proposed GTransNet-BDF method is its mesh-free nature, as the hidden-layer neurons are generated from a single domain-covering ball, allowing the method to be applied directly to domains of arbitrary shape. To highlight this flexibility, we simulate the coarsening dynamics governed by the CH equation on three computational domains -- a square, a disk, and an amoeba-shaped region with non-convex boundary -- under homogeneous Neumann boundary conditions. We use the double-well potential $F(u)=\tfrac{1}{4}u^2(u-1)^2$ with wells at $u=0$ and $u=1$ and consider the following initial condition for all three domains~\cite{Dehghan15,Dehghan17,Cao22}:
$$u_0(x,y)=0.5+0.17\cos(\pi x)\cos(2\pi y)+0.2\cos(3\pi x)\cos(\pi y).$$
We fix the common parameters $\varepsilon=0.01$, $D=0.25$, $\kappa=2$, $\delta=0.5$, $N_1=2000$, and $N_2=1000$. Other domain-specific parameters are listed below.
\begin{itemize}
    \item \textbf{Square domain:} $\Omega=(0,1)^2$, $R=0.75$, $\gamma=12$, $K_{\mathrm{int}}=200^2$, $K_{\mathrm{bd}}=800$, $\Delta t = \num{2e-3}$.
    \item \textbf{Circular domain:} $\Omega=\{(x,y): (x-0.5)^2+(y-0.5)^2<0.25\}$, $R=0.6$, $\gamma=12$, $K_{\mathrm{int}}=31428$, $K_{\mathrm{bd}}=800$, $\Delta t = \num{2e-3}$.
    \item \textbf{Amoeba-shaped domain} with boundary
    \begin{align*}
    &\partial\Omega=\left\{(x,y): x=\frac 15\rho(\theta)\cos(\theta)+\frac 25,\;y=\frac 15\rho(\theta)\sin(\theta)+\frac 25,\;0\le \theta<2\pi\right\},\\
    &\rho(\theta)=e^{\sin(\theta)}\sin^2(2\theta)+e^{\cos(\theta)}\cos^2(2\theta),
    \end{align*}
    and $\bm x_c=(0.53,0.48)$, $R=0.65$, $\gamma=13.76$, $K_{\mathrm{int}}=31259$, $K_{\mathrm{bd}}=800$, $\Delta t = \num{5e-4}$.
\end{itemize}

\begin{figure}[!ht]
    \centering
\includegraphics[width=0.3\linewidth]{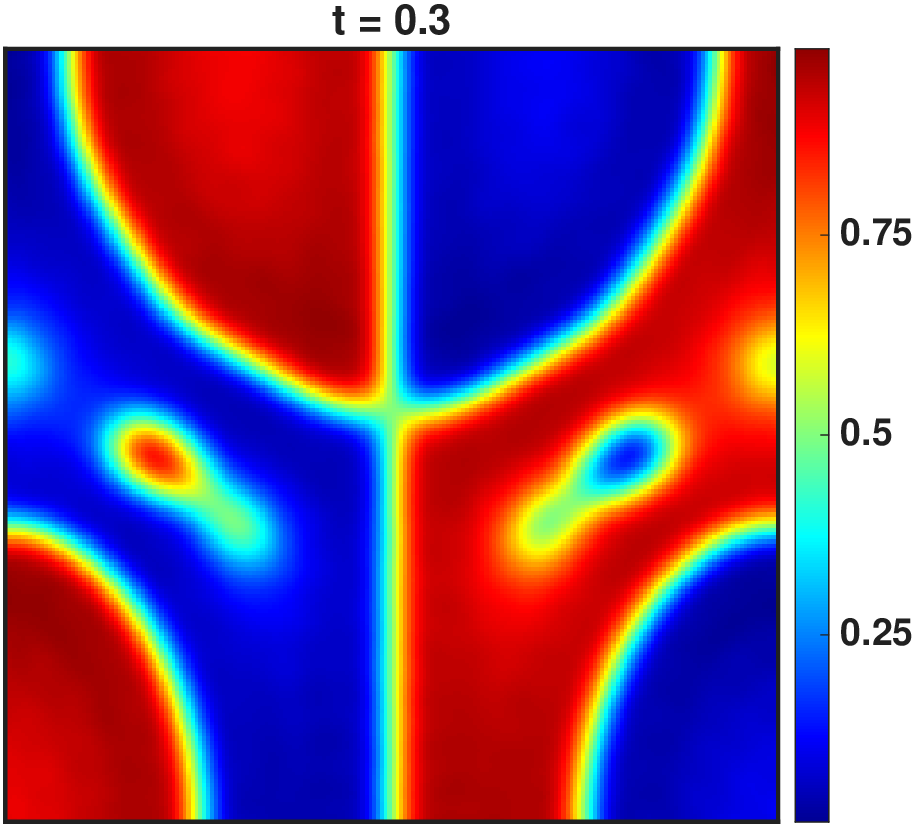}\qquad
\includegraphics[width=0.3\linewidth]{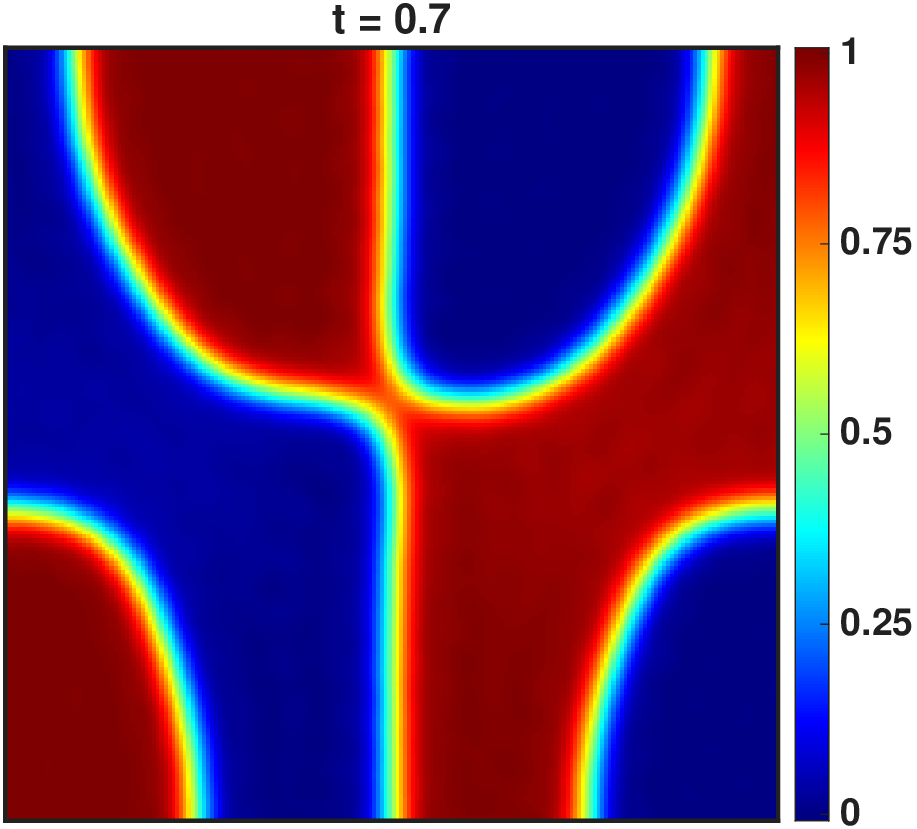}\qquad
\includegraphics[width=0.3\linewidth]{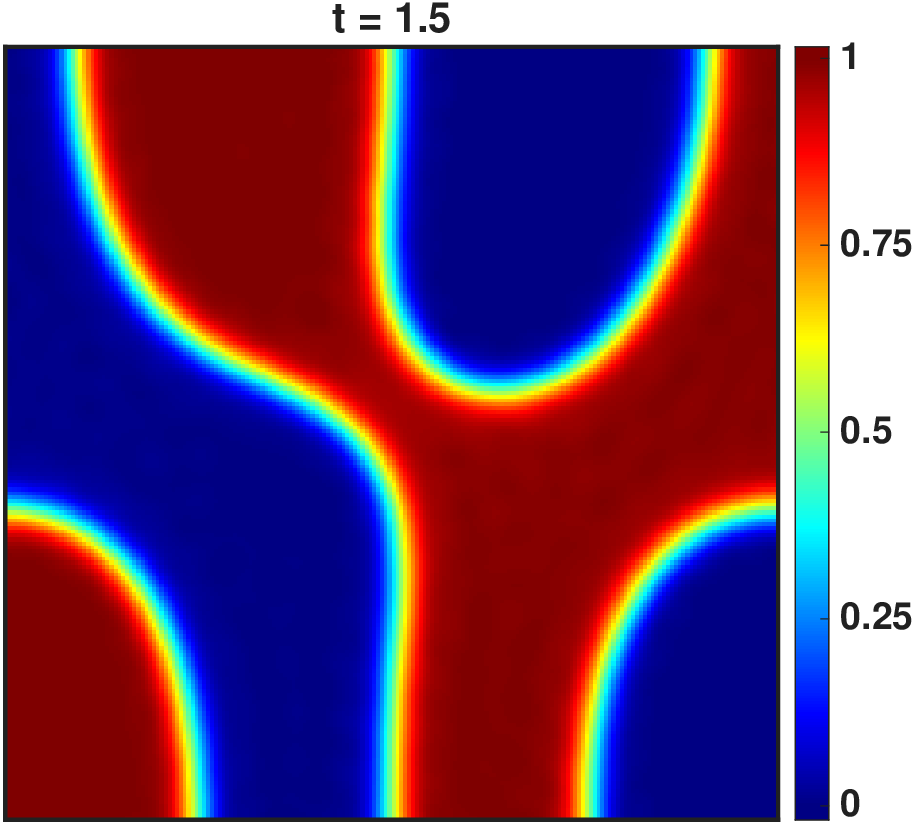}

\vspace{.5cm}

\includegraphics[width=0.3\linewidth]{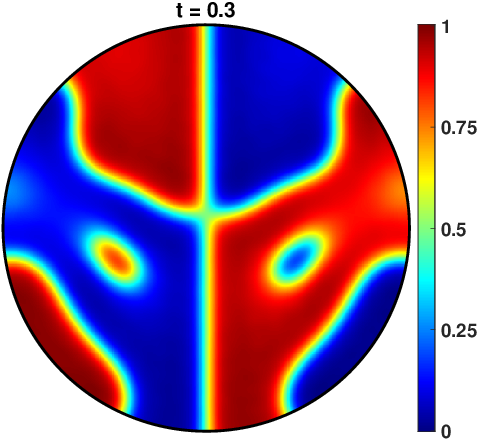}\qquad
\includegraphics[width=0.3\linewidth]{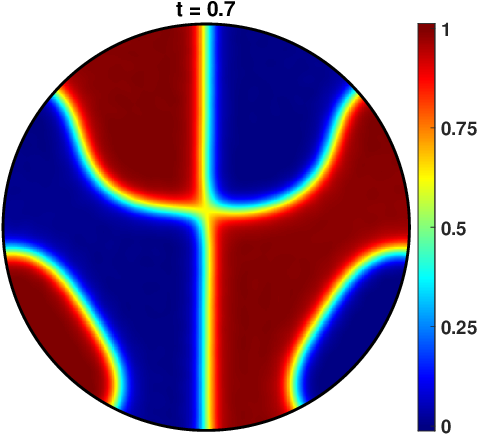}\qquad
\includegraphics[width=0.3\linewidth]{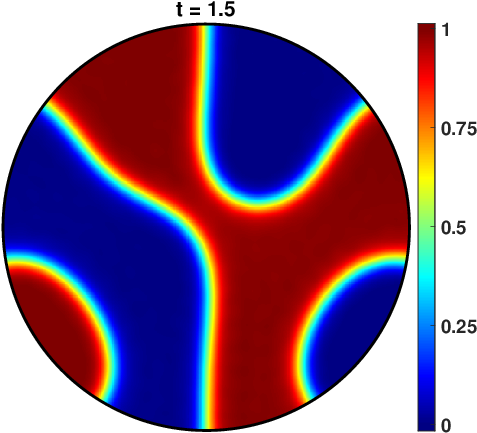}

\vspace{.5cm}

\includegraphics[width=0.3\linewidth]{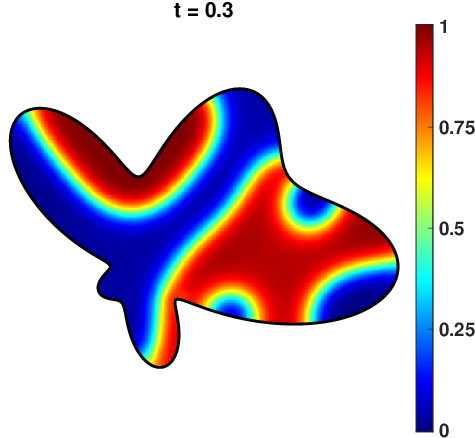}\qquad
\includegraphics[width=0.3\linewidth]{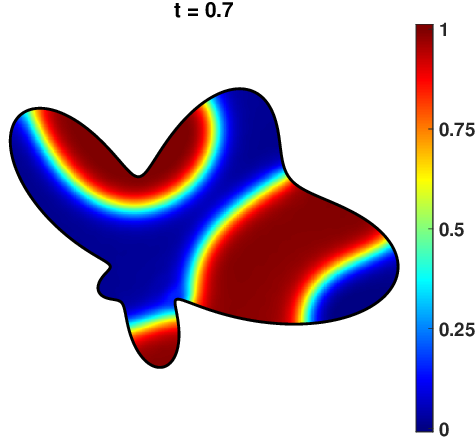}\qquad
\includegraphics[width=0.3\linewidth]{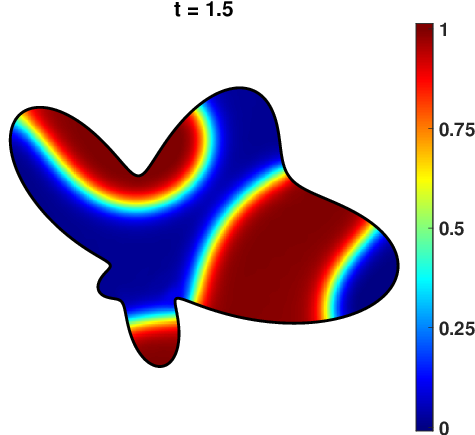}
    \caption{[Regular and irregular domains] Snapshots of the phase variable by the GTransNet-BDF2 scheme at $t=0.3,0.7$, and $1.5$ for the square, circular, and amoeba-shaped domains (top to bottom).}
    \label{fig:irregular_domains_snapshots}
\end{figure}

\begin{figure}[!ht]
    \centering
\includegraphics[width=0.32\linewidth]{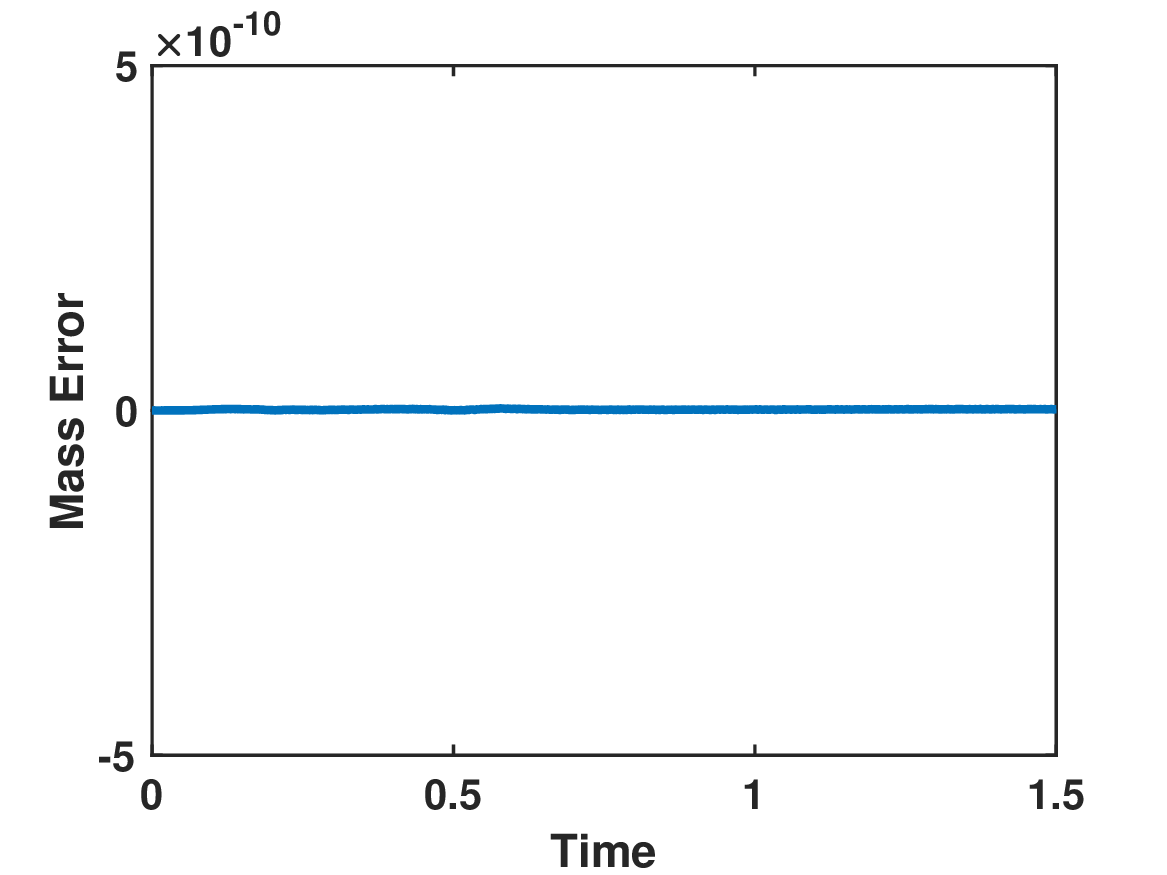}
\includegraphics[width=0.32\linewidth]{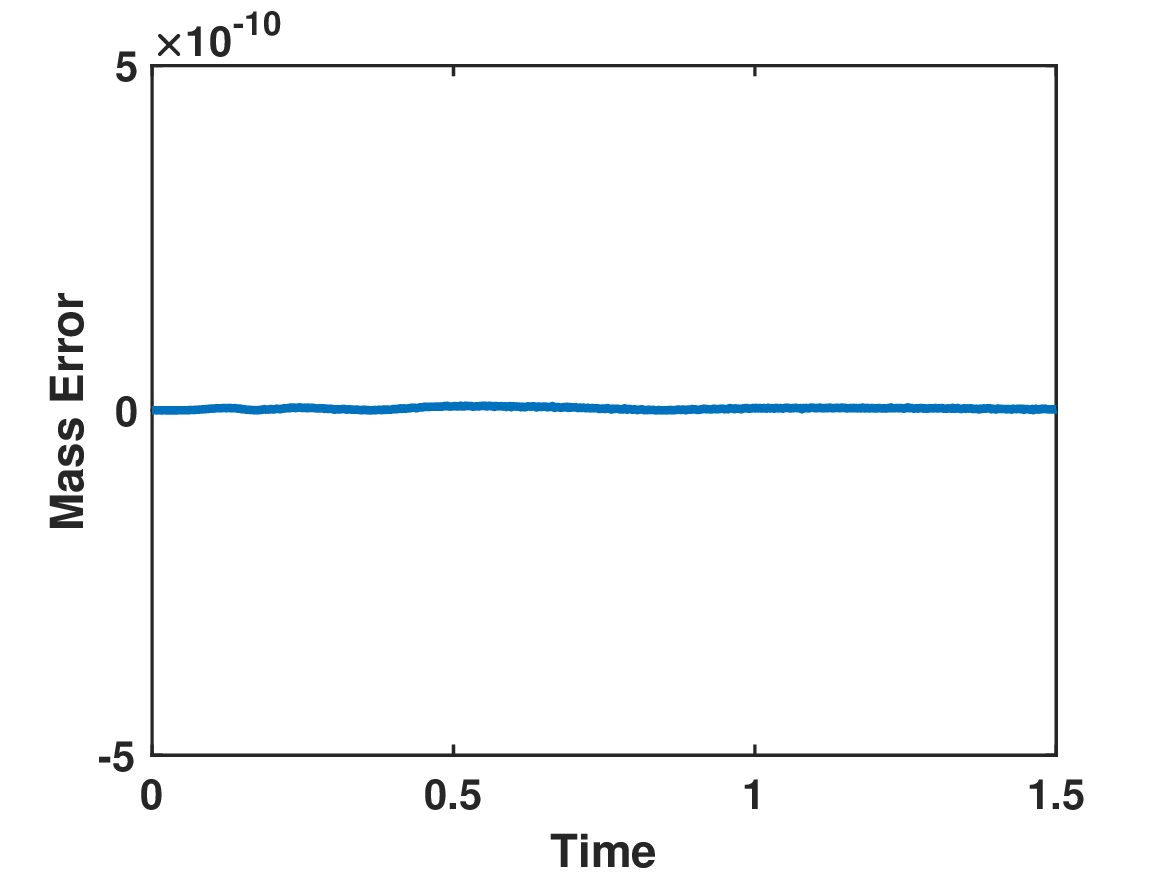}
\includegraphics[width=0.32\linewidth]{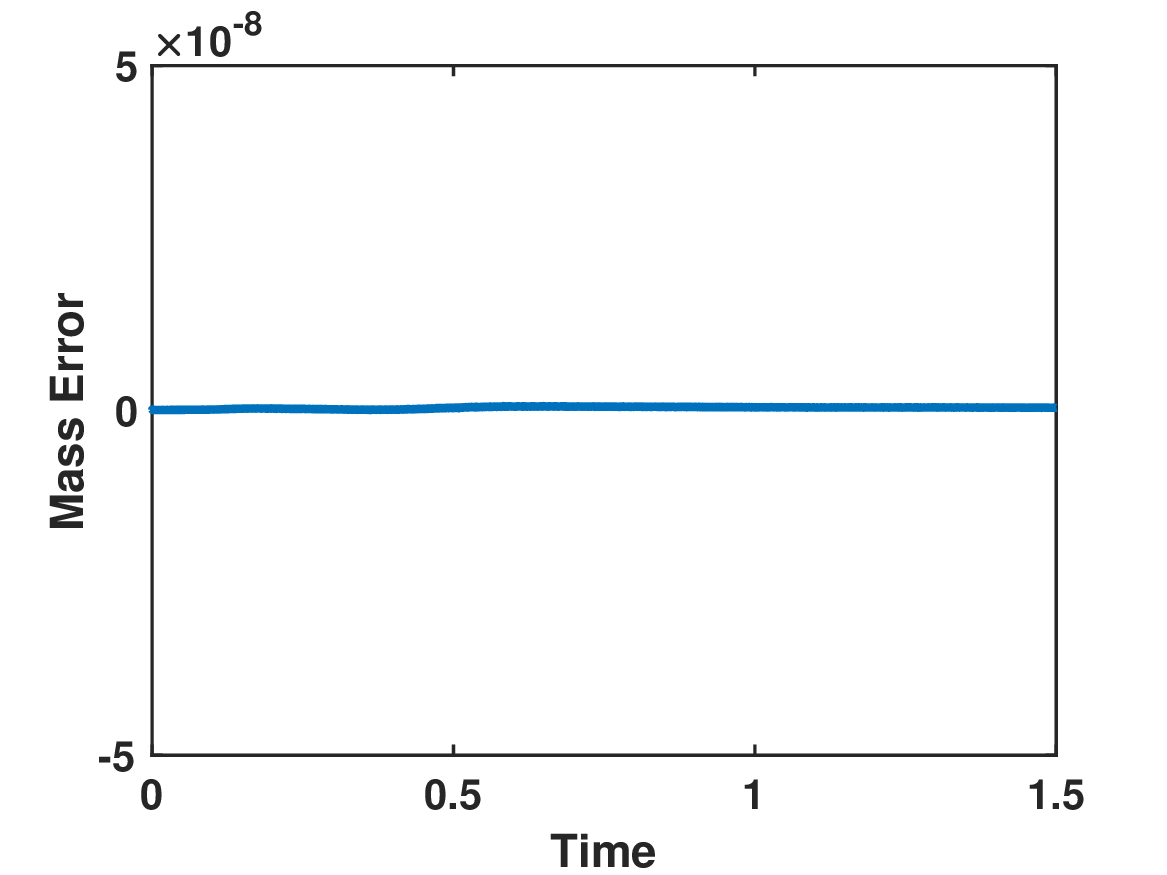}

\includegraphics[width=0.32\linewidth]{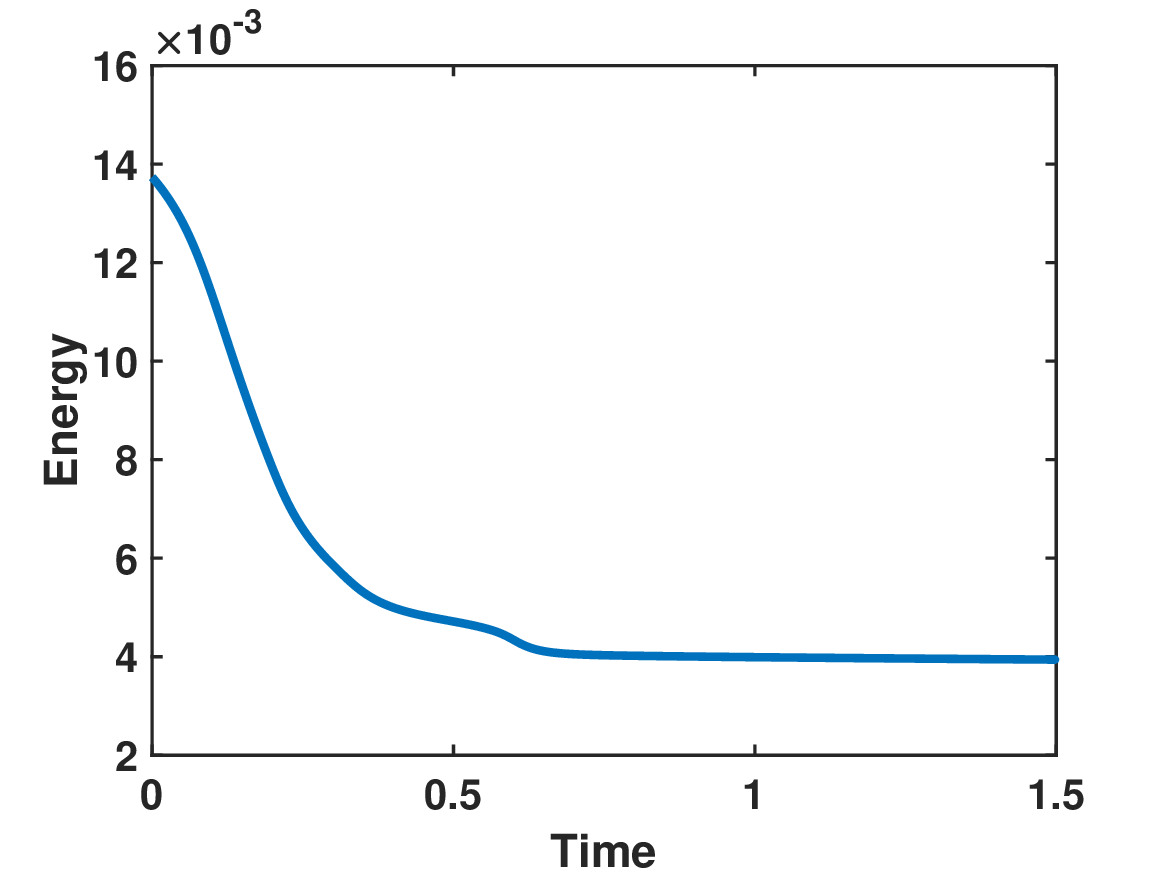}
\includegraphics[width=0.32\linewidth]{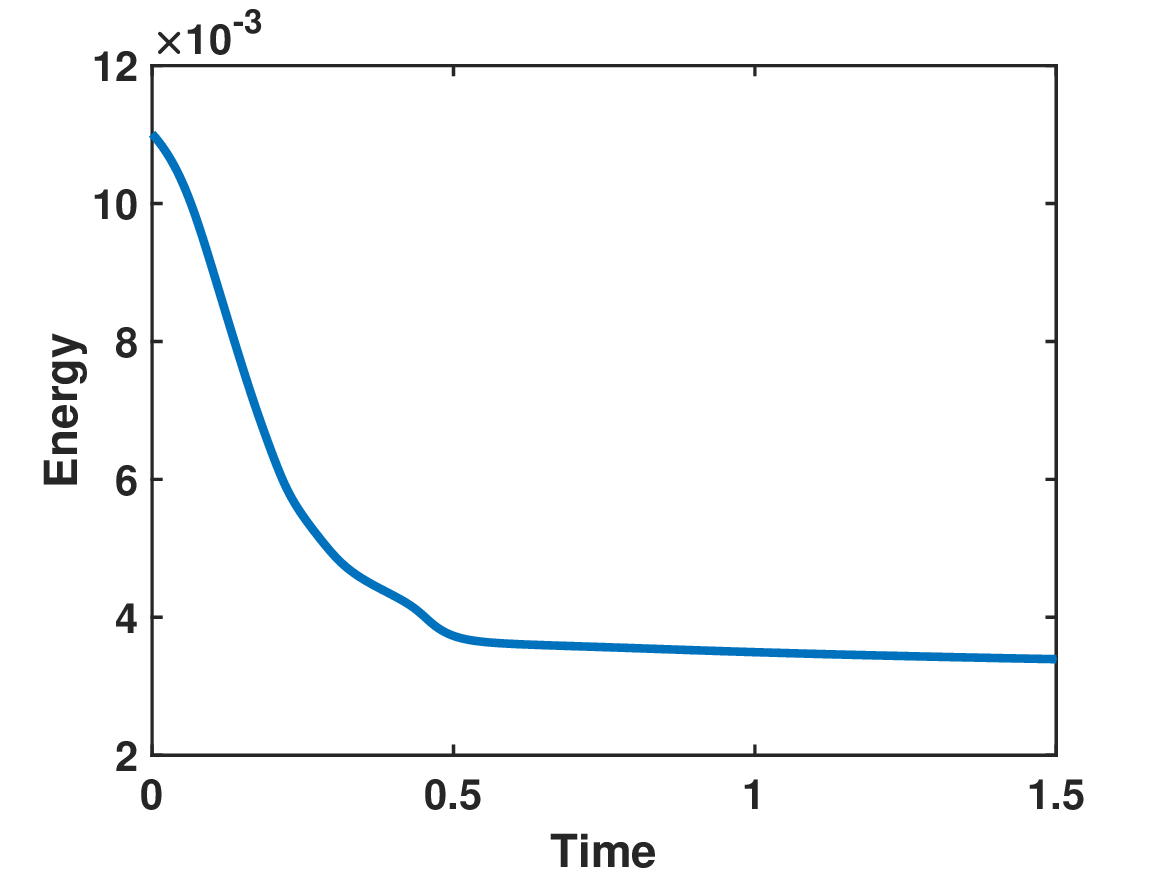}
\includegraphics[width=0.32\linewidth]{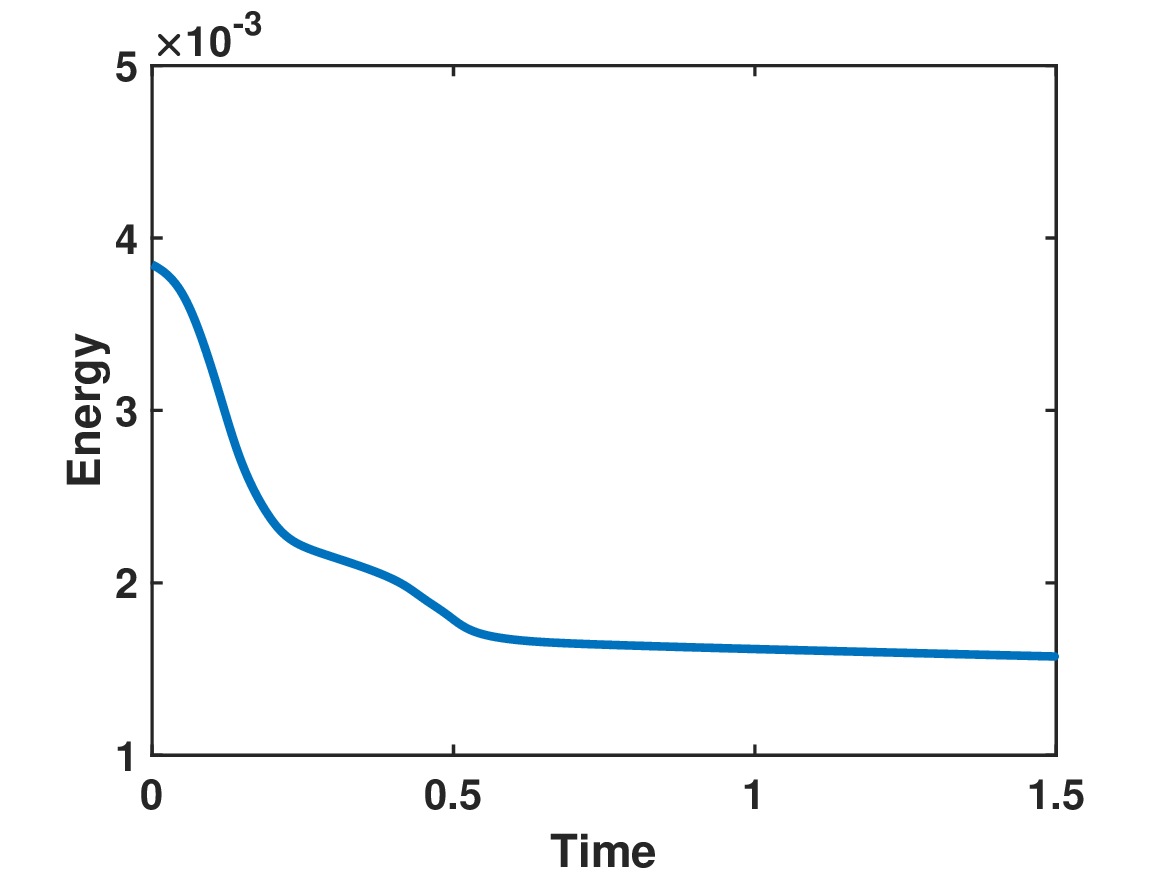}
    \caption{[Regular and irregular domains] Evolution of the absolute mass error (top) and free energy (bottom) by the GTransNet-BDF2 scheme for the square, circular, and amoeba-shaped domains (from left to right).}
    \label{fig:coarsening_eps001_mass_energy}
\end{figure}

Figure~\ref{fig:irregular_domains_snapshots} illustrates the phase variable at $t=0.3$, $0.7$, and $1.5$. In all three cases, the GTransNet-BDF2 scheme captures the early-stage spinodal decomposition and subsequent coarsening, producing smooth and well-resolved interfaces that conform to the domain geometry. The results for the square and circular domains are in strong agreement with those of the radial basis function (RBF) methods in~\cite{Dehghan15,Dehghan17}. For the amoeba-shaped domain, the GTransNet-BDF2 solution remains much closer to the physical range $[0,1]$ than the one reported in~\cite{Cao22}. Figure~\ref{fig:coarsening_eps001_mass_energy} further confirms mass conservation and energy dissipation of the numerical solutions across all three domains.

\subsection{Three-dimensional Cahn-Hilliard equation}\label{subsec:num_3D}

\subsubsection{Convergence test}
We next examine the temporal convergence of the GTransNet-BDF2 scheme in $\Omega=(0,1)^3$ under homogeneous Neumann boundary conditions, using a manufactured solution
\begin{align*}
    u(x,y,z,t)=\cos(\pi x)\cos(\pi y)\cos(\pi z)\sin(t),
\end{align*}
We set $T=1$, $D=0.1$, $\kappa=0$, $\delta=0.5$, $R=0.9$, $K_{\mathrm{int}}=30^3$, $K_{\mathrm{bd}}=5402$, and $\varepsilon\in\{0.2,0.1\}$, with the number of neurons in each hidden layer and the shape parameter listed in Table~\ref{table:params_3d} for each $\varepsilon$. The test data consist of $8K_{\mathrm{int}}$ uniformly distributed random points in $\Omega$.

\begin{table}[!ht]
    \centering
    \small
    \setlength{\extrarowheight}{3pt}
\subcaptionbox{$L=2$ hidden layers}[.49\linewidth]{
    \begin{tabular}{|c|c|c|c|}
    \hline
    $\varepsilon$ & $N_1$ & $N_2$ & $\gamma$ \\
    \hline
    $0.2$ & $2500$ & $1500$ & $1$ \\
    $0.1$ & $3000$ & $1500$ & $3$ \\
    \hline
    \end{tabular}
}\hspace{.05cm}
\subcaptionbox{$L=3$ hidden layers}[.49\linewidth]{
    \begin{tabular}{|c|c|c|c|c|}
    \hline
    $\varepsilon$ & $N_1$ & $N_2$ & $N_3$ & $\gamma$ \\
    \hline
    $0.2$ & $1500$ & $1500$ & $1500$ & $1$\\
    $0.1$ & $1500$ & $1500$ & $1500$ & $3$ \\
    \hline
    \end{tabular}
}
    \caption{[3D convergence test] Parameter settings for $\varepsilon\in\{0.2,0.1\}$.}
    \label{table:params_3d}
\end{table}

Table~\ref{table:convergence_3d} reports the $L^{\infty}$ errors of the phase variable at the final time by the GTransNet-BDF2 scheme with $L=2$ and $L=3$ hidden layers for $\varepsilon\in\{0.2,0.1\}$. As in the two-dimensional case, we observe second-order convergence of the scheme, with comparable errors for $L=2$ and $L=3$.

\begin{table}[!ht]
    \centering
    \small
    \setlength{\extrarowheight}{3pt}
\subcaptionbox{$\varepsilon=0.2$}[.49\linewidth]{
\resizebox{.49\textwidth}{!}{
\begin{tabular}{|c|c|c|}
\hline
\multirow{2}{*}{$\Delta t$} & GTransNet-BDF2 & GTransNet-BDF2 \\
 & $L=2$ hidden layers & $L=3$ hidden layers\\ \hline
$1/10$  & 8.58e-03         & 8.58e-03 \\
$1/20$  & 1.99e-03 [2.11]  & 1.99e-03 [2.11]\\
$1/40$  & 4.90e-04 [2.02]  & 4.89e-04 [2.02]\\
$1/80$  & 1.23e-04 [1.99]  & 1.22e-04 [2.00]\\
$1/160$ & 3.09e-05 [1.99]  & 3.11e-05 [1.97]\\
$1/320$ & 7.48e-06 [2.05]  & 8.47e-06 [1.88]\\
\hline
\end{tabular}
}
}\hspace{.05cm}
\subcaptionbox{$\varepsilon=0.1$}[.49\linewidth]{
\resizebox{.49\textwidth}{!}{
\begin{tabular}{|c|c|c|}
\hline
\multirow{2}{*}{$\Delta t$} & GTransNet-BDF2 & GTransNet-BDF2 \\
 & $L=2$ hidden layers & $L=3$ hidden layers \\ \hline
$1/10$  & 8.19e-02         & 8.19e-02 \\
$1/20$  & 2.33e-02 [1.81]  & 2.33e-02 [1.81] \\
$1/40$  & 6.18e-03 [1.91]  & 6.18e-03 [1.91] \\
$1/80$  & 1.58e-03 [1.97]  & 1.58e-03 [1.97] \\
$1/160$ & 3.98e-04 [1.99]  & 3.97e-04 [1.99] \\
$1/320$ & 1.05e-04 [1.92]  & 9.66e-05 [2.04] \\
\hline
\end{tabular}
}
}
    \caption{[3D convergence test] $L^{\infty}$ errors of the phase variable at the final time for $\varepsilon\in\{0.2,0.1\}$.}
    \label{table:convergence_3d}
\end{table}

\subsubsection{Coarsening dynamics}\label{subsubsec:coarsening_3D}
We proceed to consider the three-dimensional coarsening dynamics in $\Omega=(0,1)^3$ under homogeneous Neumann boundary conditions, a natural extension of the two-dimensional case. The initial condition is again a small random perturbation of a uniform state
$$u_0(x,y,z)=\overline{u}+0.05\,\mathrm{rand}(x,y,z),$$
with $\mathrm{rand}(x,y,z)$ uniformly distributed in $[-1,1]$. We take $\varepsilon=0.04$, $\overline{u}=0$, $D=0.05$, $\kappa=2$, $\delta=0.5$, $R=0.9$, $K_{\mathrm{int}}=50^3$, and $K_{\mathrm{bd}}=15002$. The simulation is run until $T=20$ using the GTransNet-BDF2 scheme with $N_1=4000$, $N_2=2000$, $\gamma=7$, and $\Delta t=\num{5e-3}$.

\begin{figure}[!ht]
    \centering
\includegraphics[width=0.46\linewidth]{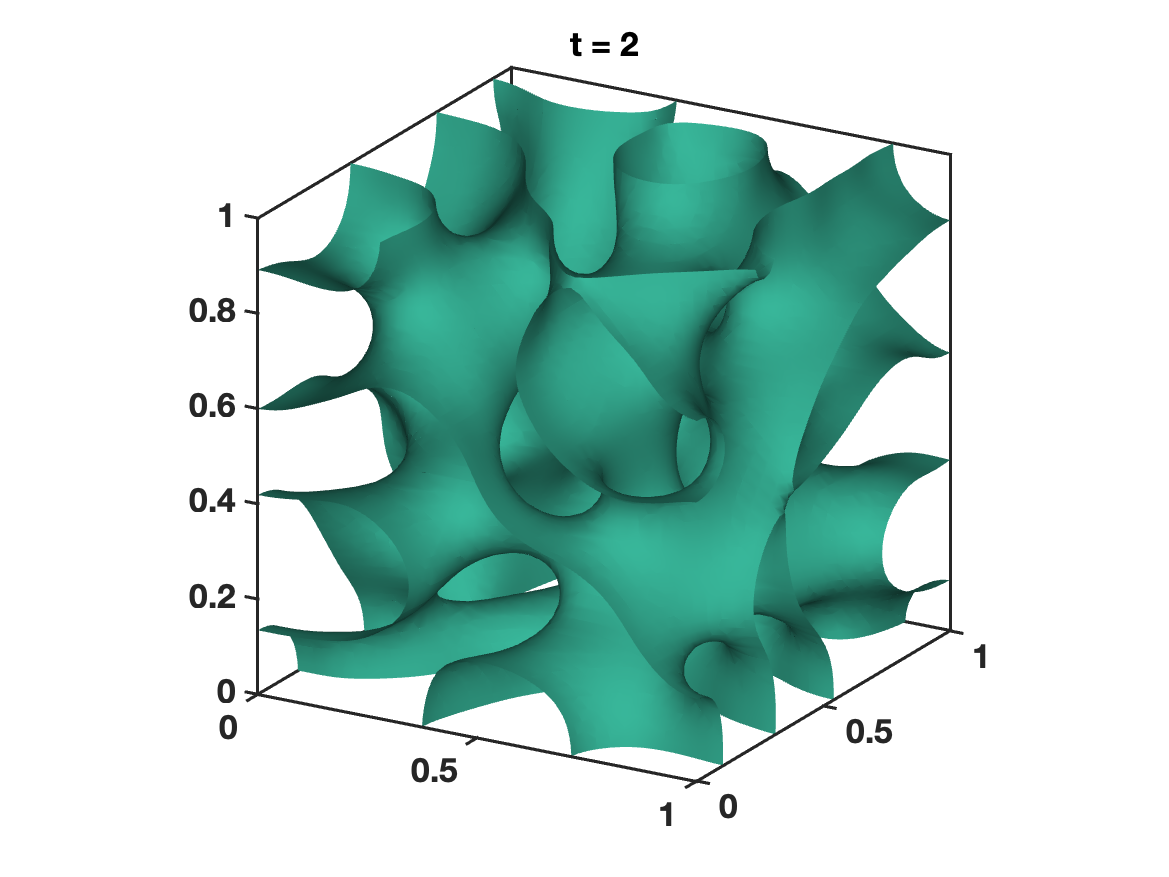}\qquad
\includegraphics[width=0.46\linewidth]{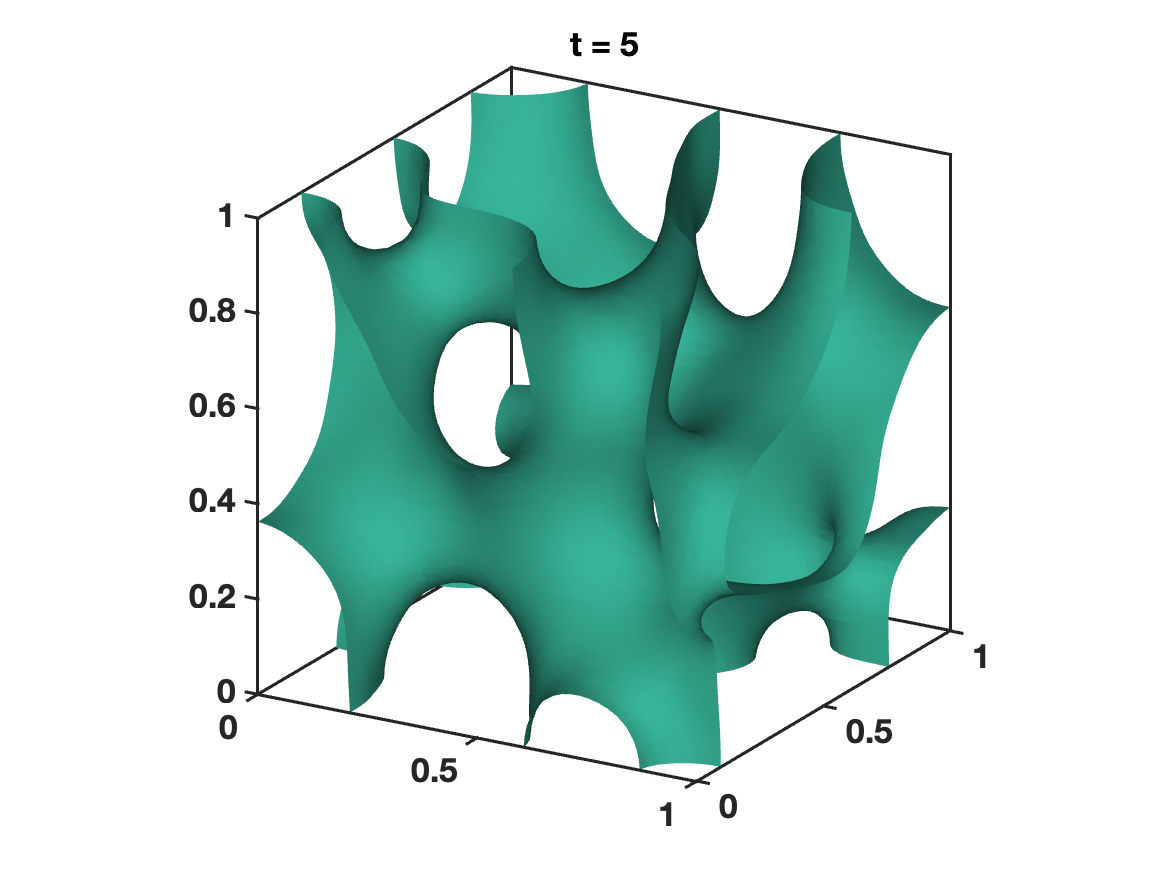}

\vspace{0.2cm}

\includegraphics[width=0.46\linewidth]{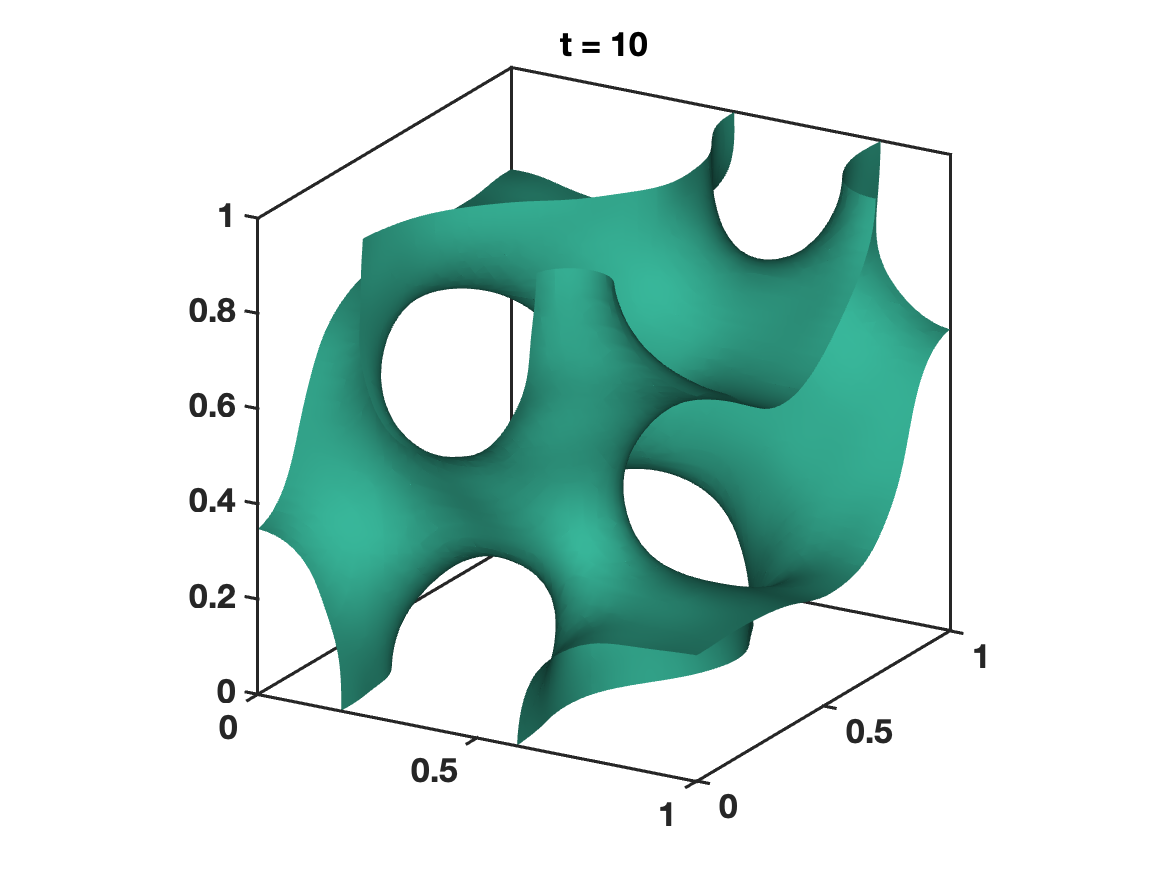}\qquad
\includegraphics[width=0.46\linewidth]{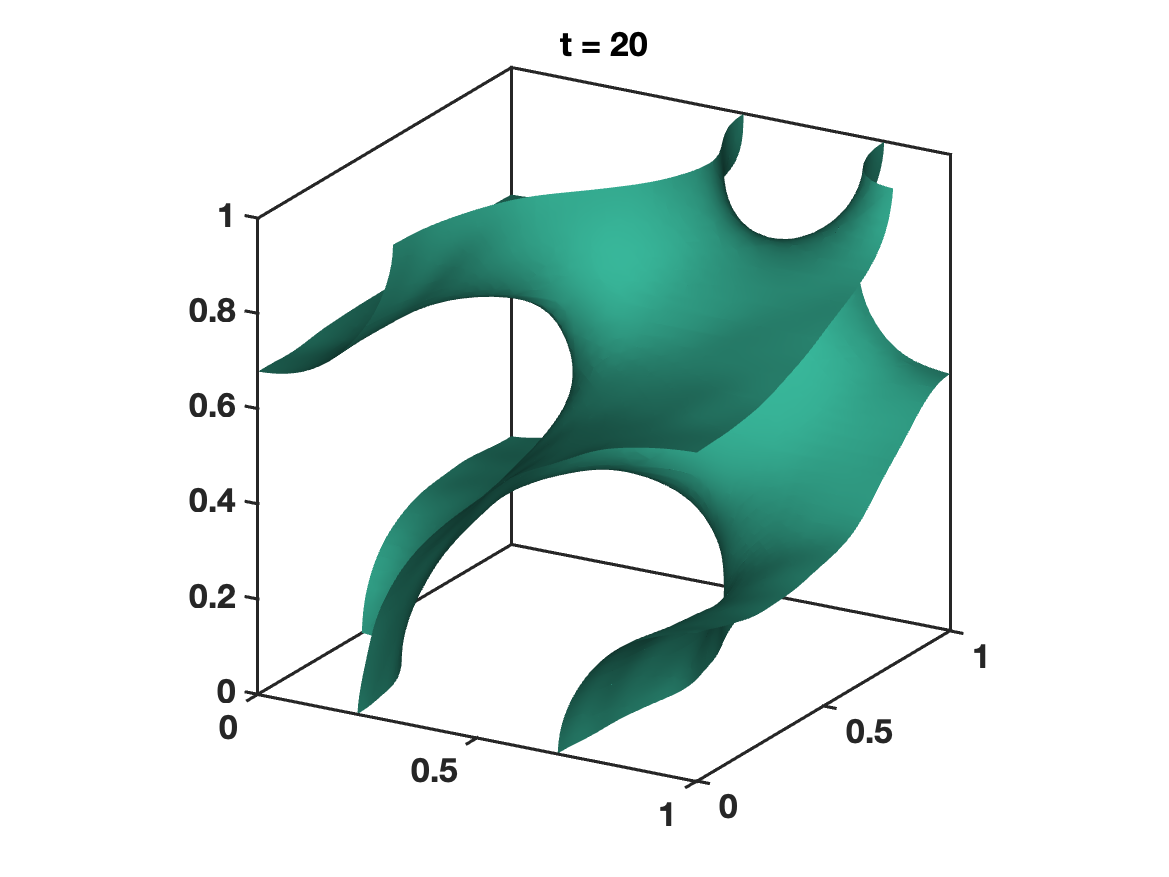}
    \caption{[3D coarsening dynamics] Zero-isosurfaces of the phase variable by the GTransNet-BDF2 scheme at $t=2,5,10$, and $20$.}
    \label{fig:coarsening3D_eps004_snapshots}
\end{figure}

\begin{figure}[!ht]
    \centering
\includegraphics[width=0.4\linewidth]{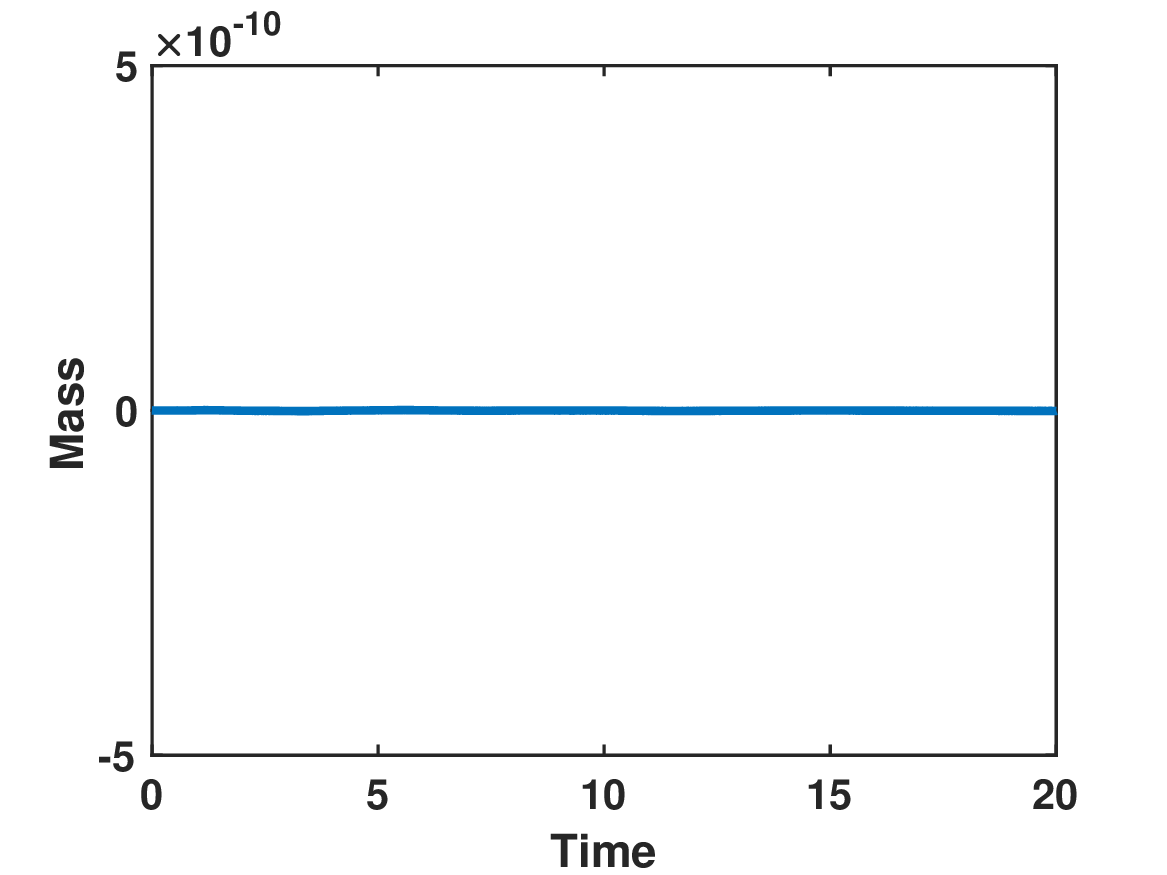}
\includegraphics[width=0.4\linewidth]{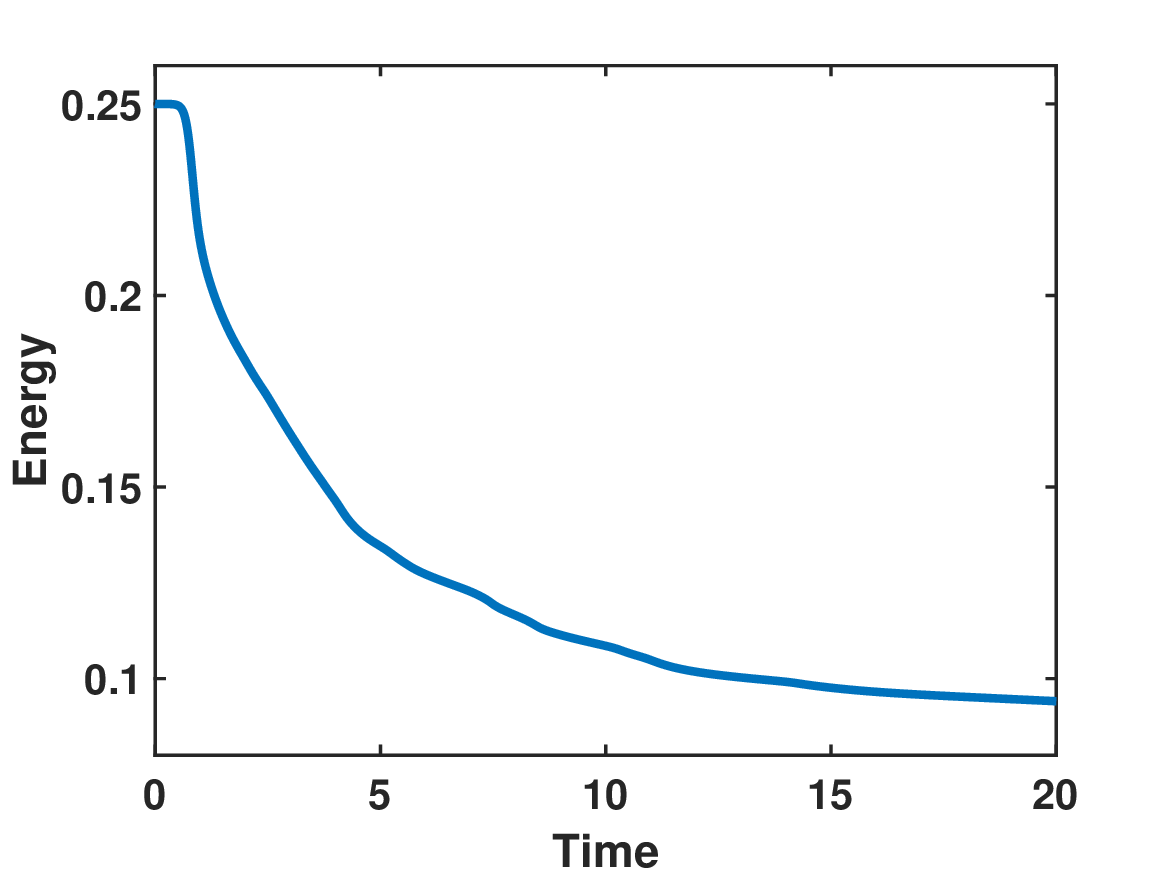}
    \caption{[3D coarsening dynamics] Evolution of the absolute mass error and free energy by the GTransNet-BDF2 scheme.}
    \label{fig:coarsening3D_eps004_mass_energy}
\end{figure}

Isosurface snapshots of the phase variable (the zero level set $\{u=0\}$) at different times by the GTransNet-BDF2 scheme are shown in Figure~\ref{fig:coarsening3D_eps004_snapshots}. Starting from the random initial configuration, the interface coarsens over time, with the fine, highly connected structure at $t=2$ progressively simplifying into a smoother surface with fewer features by $t=20$. Figure~\ref{fig:coarsening3D_eps004_mass_energy} reports the corresponding evolution of the absolute mass error and free energy, confirming the preservation of the two intrinsic properties.

\subsection{Cahn-Hilliard equation with degenerate mobility}

Finally, we extend the proposed GTransNet-BDF framework to the CH equation with variable mobility $M(u)$, for which the corresponding first- and second-order schemes are derived in~\ref{sec:appendix_varmob}. We consider the symmetric degenerate mobility $M(u)=|1-u^2|,$ which vanishes at the pure phases $u=\pm 1$ and attains its maximum $\Gamma=\max_{u\in[-1,1]} M(u)=1$ at $u=0$. The degeneracy suppresses diffusion in the bulk regions and confines mass transport to the thin interfacial layers, so that coarsening proceeds mainly through surface diffusion and the late-stage dynamics are considerably slower than in the constant-mobility case (i.e., $M(u)\equiv 1$).

We solve the CH equation in $\Omega=(0,1)^2$ under homogeneous Neumann boundary conditions, with the following initial data~\cite{Ju15}:
$$u_0(x,y) = \overline{u}+0.2\,\mathrm{rand}(x,y),$$
where $\mathrm{rand}(x,y)$ is uniformly distributed in $[-1,1]$ and $\overline u\in\{0,0.5\}$. We take $\varepsilon=0.02$, $\kappa=2$, $\delta=0.5$, $R=0.75$, $K_{\mathrm{int}}=250^2$, $K_{\mathrm{bd}}=1000$, and $\Delta t=\num{5e-4}$, and run the simulation until $T=5$ using the GTransNet-BDF2 scheme with $N_1=3000$, $N_2=1500$, $\gamma=14$ for $\overline u=0$, and $N_1=4000$, $N_2=2000$, $\gamma=16$ for $\overline u=0.5$.

\begin{figure}[!ht]
    \centering
\includegraphics[width=0.3\linewidth]{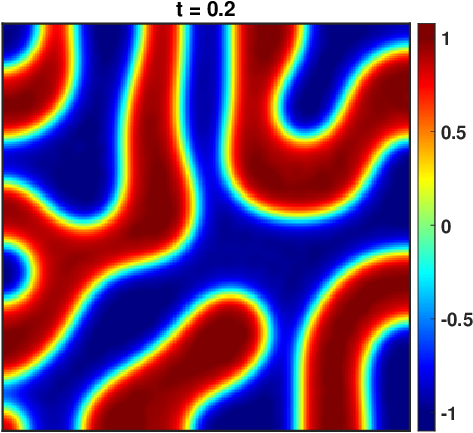}\qquad
\includegraphics[width=0.3\linewidth]{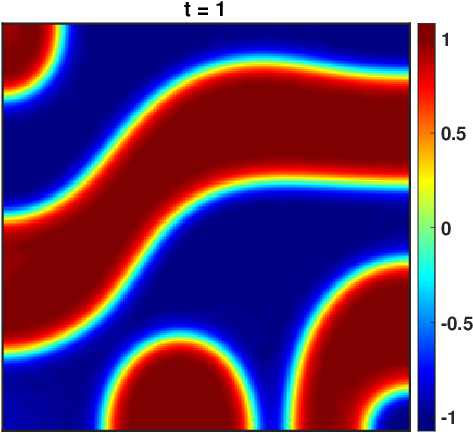}\qquad
\includegraphics[width=0.3\linewidth]{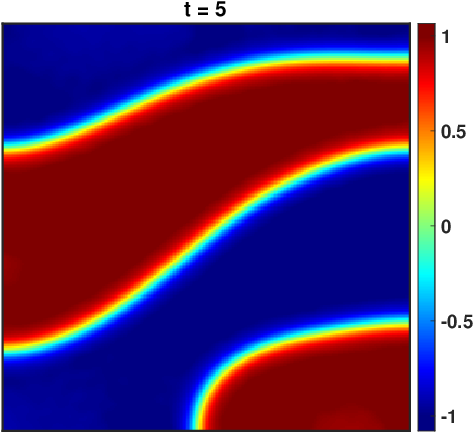}

\vspace{.4cm}    
    
\includegraphics[width=0.3\linewidth]{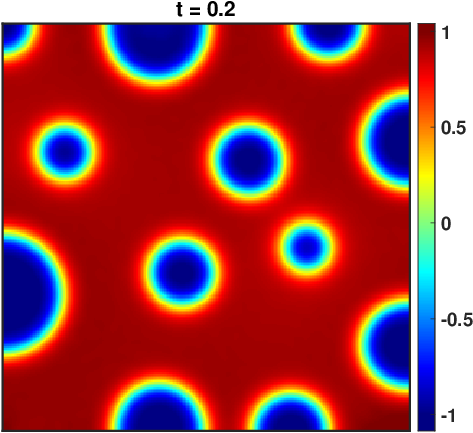}\qquad
\includegraphics[width=0.3\linewidth]{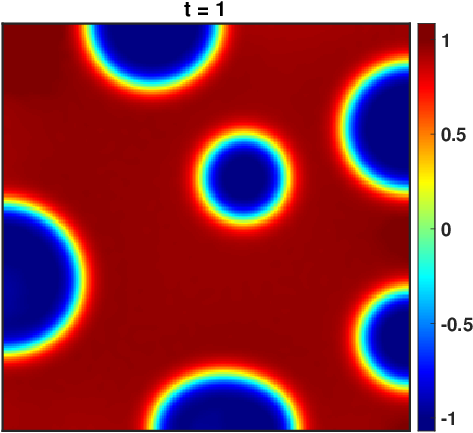}\qquad
\includegraphics[width=0.3\linewidth]{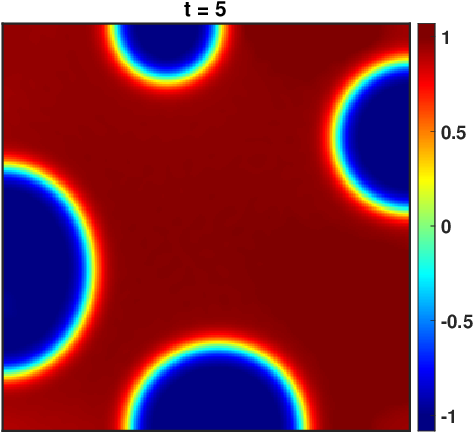}
    \caption{[Degenerate mobility] Snapshots of the phase variable by the GTransNet-BDF2 scheme with $M(u)=|1-u^2|$ for $\overline{u}=0$ (top) and $\overline{u}=0.5$ (bottom) at $t=0.2, 1$, and $5$.}
    \label{fig:mobility_eps002_snapshots}
\end{figure}

\begin{figure}[!ht]
    \centering
\includegraphics[width=0.35\linewidth]{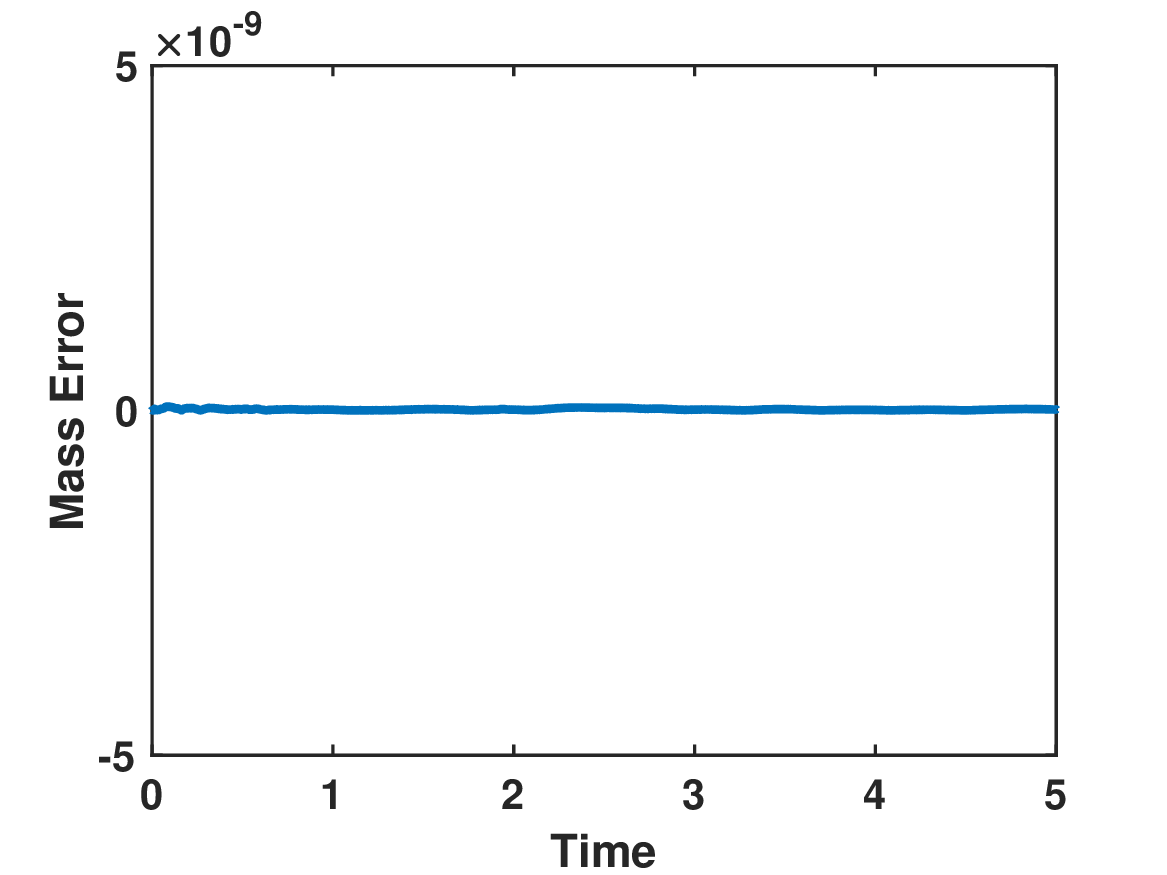}
\includegraphics[width=0.35\linewidth]{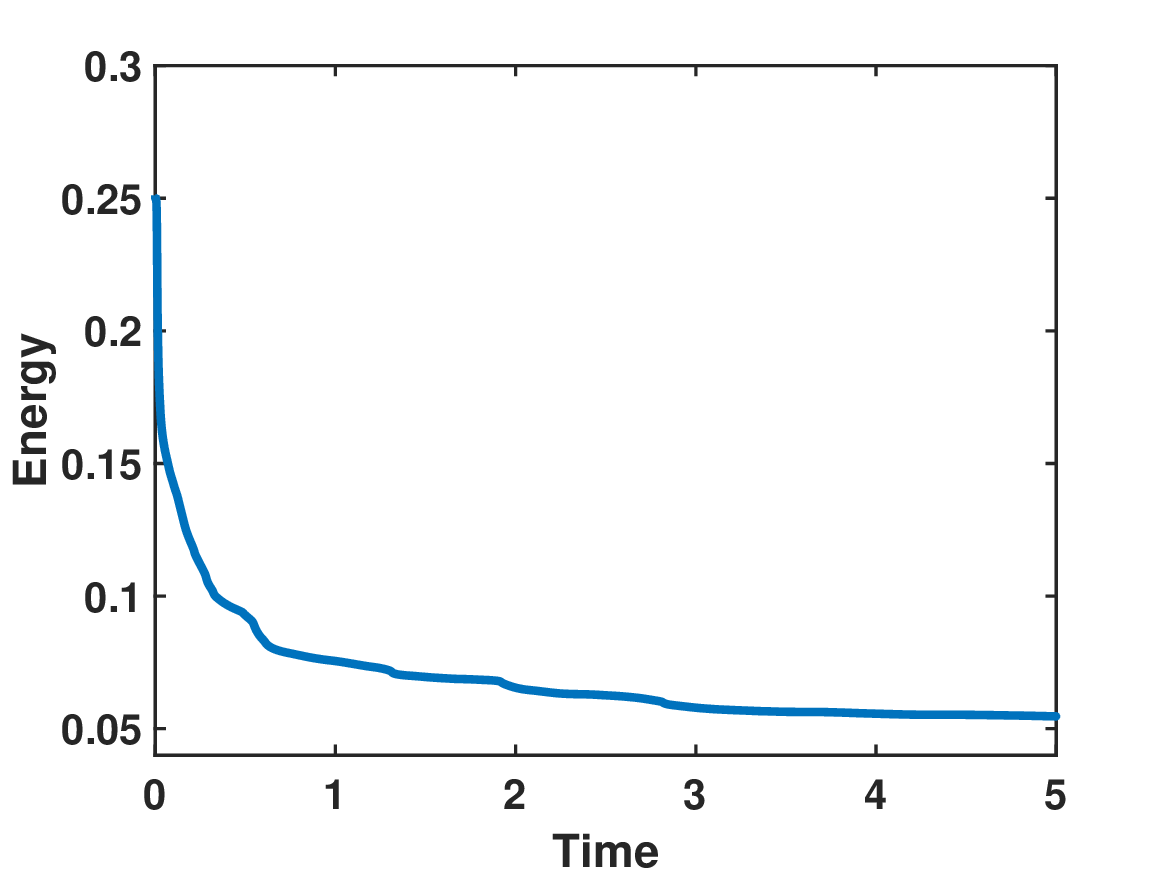}    
    
\includegraphics[width=0.35\linewidth]{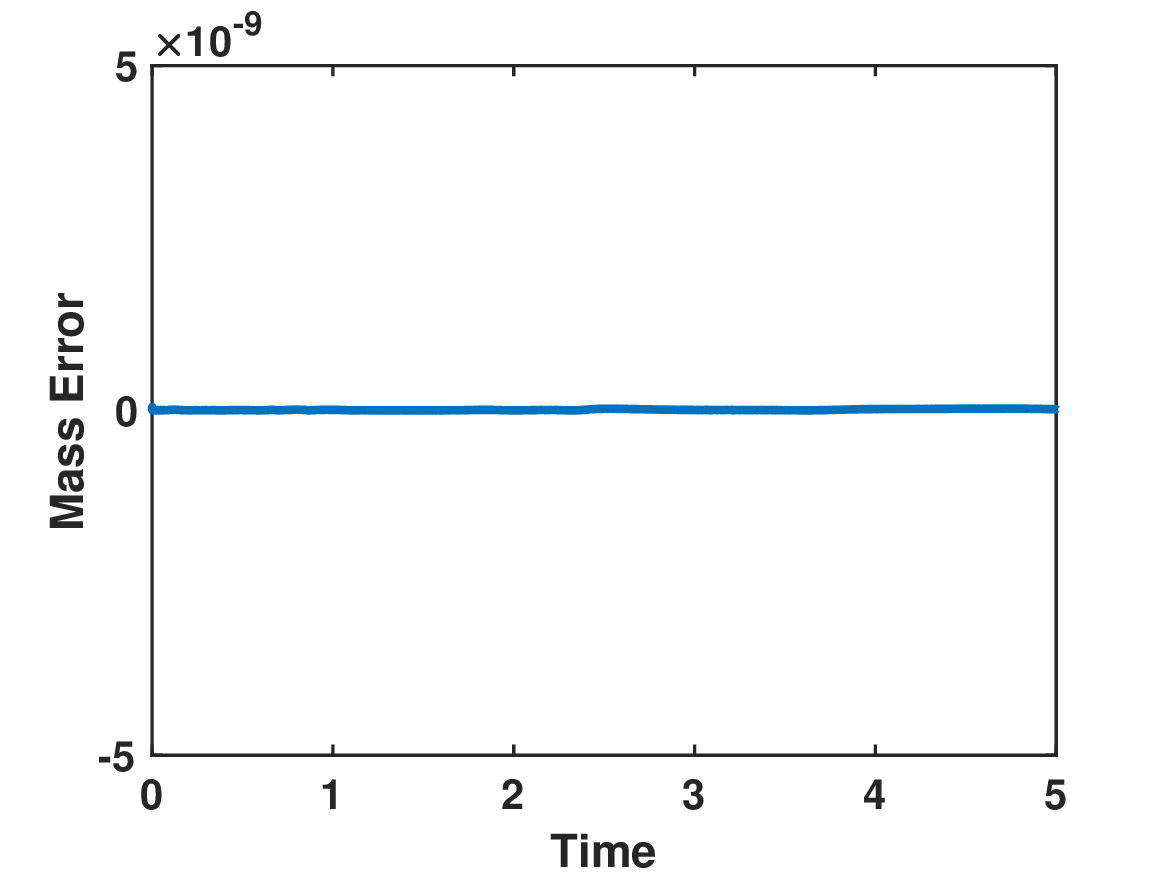}
\includegraphics[width=0.35\linewidth]{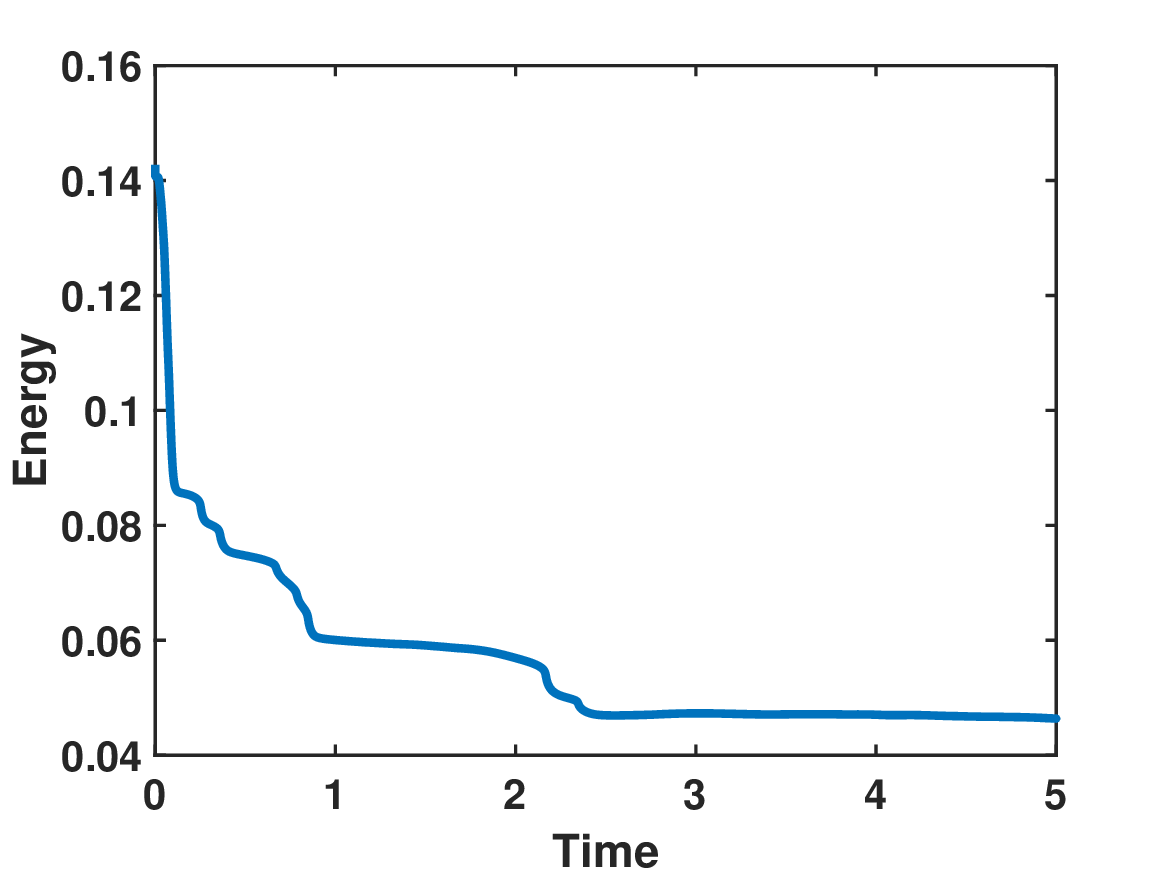}
    \caption{[Degenerate mobility] Evolution of the absolute mass error and free energy by the GTransNet-BDF2 scheme with $M(u)=|1-u^2|$ for $\overline{u}=0$ (top) and $\overline{u}=0.5$ (bottom).}
    \label{fig:mobility_eps002_mass_energy}
\end{figure}

The phase variable for $\overline u\in\{0,0.5\}$ at $t=0.2, 1$, and $5$ is plotted in Figure~\ref{fig:mobility_eps002_snapshots}. For $\overline u=0$, the phases form elongated bands that coarsen into a few wide stripes, while for $\overline u=0.5$, the minority phase appears as circular droplets that grow and merge into larger ones over time. Figure~\ref{fig:mobility_eps002_mass_energy} verifies mass conservation and energy dissipation of the numerical solution, demonstrating that the framework extends robustly to the degenerate-mobility regime.

\section{Conclusions}\label{sec:conclusion}

In this work, we proposed the first- and second-order GTransNet-BDF schemes coupled with the mass-conserving projection for solving the CH equation. The method combines GTransNet architecture for spatial approximation with stabilized BDF for time integration; for the former, the first hidden layer is predetermined as in the original TransNet with a symmetric bias distribution, followed by a variance-controlled weight sampling strategy for the subsequent hidden layers. The schemes preserve two important properties of the CH equation at the time-discrete level, namely mass conservation and energy stability. By employing the collocation-based method, a least-squares system for the unknown output-layer weights is solved at each time step, which can be implemented efficiently using a precomputed QR factorization. To mitigate the effect of least-squares error, we introduced the mass-conserving projection that corrects the least-squares solution along a single direction with minimal perturbation so that the discrete mass is conserved up to a satisfactory accuracy. Numerical experiments in two and three dimensions were  presented to confirm the theoretical findings and illustrate the performance of GTransNet-BDF methods for the CH equation with constant and degenerate mobility.


Several natural directions remain for future research on the GTransNet-BDF framework. First, the collocation-based implementation relies on a large number of collocation points, so developing GTransNet-BDF variants that achieve comparable accuracy with substantially fewer points would improve efficiency. Second, an explicit formula for the optimal shape parameter $\gamma$ in the GTransNet activation is not yet available; our numerical experiments suggest it depends on the spatial dimension $d$, the number of neurons in the first hidden layer $N_1$, the interfacial thickness $\varepsilon$, and is sensitive to the time step size $\Delta t$, especially when $\Delta t\ll 1$. Establishing a precise relationship between $\gamma$ and these parameters is therefore an important topic for future work. The extension of the current framework to other complicated time-dependent PDEs, such as the incompressible Navier-Stokes equations~\cite{Doan25a} and the coupled Cahn-Hilliard-Navier-Stokes system~\cite{Doan25b}, as well as its combination with domain decomposition techniques~\cite{Lu25} for improved computational efficiency on large-scale problems, requires further investigation.



\section*{CRediT authorship contribution statement}
{\bf Cao-Kha Doan}: Methodology, Software, Validation, Writing-original draft;
{\bf Thi-Thao-Phuong Hoang}: Conceptualization, Methodology, Project administration, Writing-reviewing and editing;
{\bf Lili Ju}: Conceptualization, Methodology, Writing-reviewing and editing;
{\bf Shuting Wang}: Validation, Writing-reviewing and editing.

\section*{Declaration of Interests}
The authors have not disclosed any competing interests.

\section*{Data Availability}
No data was used for the research described in the article.

\section*{Acknowledgements}
T.-T.-P. Hoang's work is partially supported by U.S. National Science Foundation under grant number DMS-2041884.  L. Ju's work is partially supported by U.S. National Science Foundation under grant number DMS-2409634 and U.S. Department of Energy under grant number DE-SC0025527.


\appendix

\section{GTransNet-BDF schemes for the Cahn-Hilliard equation with variable mobility}\label{sec:appendix_varmob}

We extend the GTransNet-BDF framework to the CH equation with variable mobility, which takes the following form
\begin{align}\label{eq:CHmix_varmob}
\begin{cases}
\dfrac{\partial u}{\partial t}=\nabla\!\cdot\!\bigl(M(u)\,\nabla\mu\bigr), & \text{in }\Omega\times(0,T],\\[4pt]
\mu=-\varepsilon^2\Delta u+f(u), & \text{in }\Omega\times(0,T],
\end{cases}
\end{align}
subject to the same initial and boundary conditions as in Section~\ref{sec:GTransNet_BDF}. We remark that a direct application of the GTransNet-BDF method to~\eqref{eq:CHmix_varmob} leads to solution-dependent coefficients in the least-squares matrix~\eqref{LSQ}. To recover the one-time factorization of the coefficient matrix for efficient implementation, we adopt the mobility splitting strategy introduced in~\cite{Ju15}. Toward that end, let
$\Gamma:=\max_{u}M(u)$ and consider the decomposition
\begin{align*}
M(u)\,\nabla\mu=\Gamma\,\nabla\mu+\bigl(M(u)-\Gamma\bigr)\nabla\mu,
\end{align*}
so that the first equation in~\eqref{eq:CHmix_varmob} can be written equivalently as
\begin{align}\label{eq:CHmix_varmob_split}
\frac{\partial u}{\partial t}=\Gamma\Delta\mu+\nabla\!\cdot\!\Bigl[\bigl(M(u)-\Gamma\bigr)\nabla\mu\Bigr].
\end{align}
The first term on the right-hand side of~\eqref{eq:CHmix_varmob_split} carries a constant coefficient and is treated implicitly; the second term, in which $M(u)-\Gamma\le 0$, is treated explicitly via extrapolation. As in Section~\ref{sec:GTransNet_BDF}, we let $\kappa\ge 0$ be a stabilization constant, set $f_\kappa(u)=f(u)-\kappa u$, and approximate both $u$ and $\mu$ by their GTransNet expansions~\eqref{u_mu_GTransNet} in the basis~\eqref{GTransNet}.

\subsection{GTransNet-BDF schemes}

Applying the backward Euler method to~\eqref{eq:CHmix_varmob}-\eqref{eq:CHmix_varmob_split} with explicit treatment of the nonlinear term and the variable mobility correction term yields the first-order GTransNet-BDF1 scheme
\begin{subequations}\label{GBDF1_varmob}
\begin{align}
\sum_{j=1}^{N_L}\frac{\alpha_j^{n+1}-\alpha_j^{n}}{\Delta t}\phi_j(\bm x)
&=\Gamma\sum_{j=1}^{N_L}\beta_j^{n+1}\Delta\phi_j(\bm x)
   +\nabla\!\cdot\!\Bigl[\bigl(M(u_{\mathrm{NN}}^n(\bm x))-\Gamma\bigr)\nabla\mu_{\mathrm{NN}}^{\star,n}(\bm x)\Bigr],\label{GBDF1a_varmob}\\
\sum_{j=1}^{N_L}\beta_j^{n+1}\phi_j(\bm x)
&=\sum_{j=1}^{N_L}\alpha_j^{n+1}(-\varepsilon^2\Delta+\kappa)\phi_j(\bm x)+f_{\kappa}(u_{\mathrm{NN}}^n(\bm x)),\label{GBDF1b_varmob}
\end{align}
\end{subequations}
where
$\mu_{\mathrm{NN}}^{\star,n}=-\varepsilon^2\Delta u_{\mathrm{NN}}^n+f(u_{\mathrm{NN}}^n)$. Similarly, the corresponding second-order GTransNet-BDF2 scheme is given by
\begin{subequations}\label{GBDF2_varmob}
\begin{align}
\sum_{j=1}^{N_L}\frac{3\alpha_j^{n+1}-4\alpha_j^n+\alpha_j^{n-1}}{2\Delta t}\phi_j(\bm x)
&=\Gamma\sum_{j=1}^{N_L}\beta_j^{n+1}\Delta\phi_j(\bm x)
   +\nabla\!\cdot\!\Bigl[\bigl(M(\tilde u_{\mathrm{NN}}^{n+1}(\bm x))-\Gamma\bigr)\nabla\tilde\mu_{\mathrm{NN}}^{n+1}(\bm x)\Bigr],\label{GBDF2a_varmob}\\
\sum_{j=1}^{N_L}\beta_j^{n+1}\phi_j(\bm x)
&=\sum_{j=1}^{N_L}\alpha_j^{n+1}(-\varepsilon^2\Delta+\kappa)\phi_j(\bm x)+f_{\kappa}(\tilde u_{\mathrm{NN}}^{n+1}(\bm x)),\label{GBDF2b_varmob}
\end{align}
\end{subequations}
where  $\tilde u_{\mathrm{NN}}^{n+1}=2u_{\mathrm{NN}}^n-u_{\mathrm{NN}}^{n-1}$ and
$\tilde\mu_{\mathrm{NN}}^{n+1}=-\varepsilon^2\Delta\tilde u_{\mathrm{NN}}^{n+1}+f(\tilde u_{\mathrm{NN}}^{n+1}).$
For the initialization step, $\{\alpha_j^1\}_{j=1}^{N_L}$ is computed by the GTransNet-BDF1 scheme~\eqref{GBDF1_varmob}.

\subsection{Implementation}

The least-squares systems for the schemes~\eqref{GBDF1_varmob} and~\eqref{GBDF2_varmob} retain the structure of~\eqref{LSQ} with two modifications: the diffusion coefficient $D$ in the implicit term is replaced by $\Gamma$, and the first block of the right-hand side vector includes an explicit variable-mobility contribution. For the GTransNet-BDF1 scheme~\eqref{GBDF1_varmob}, we solve for $\bm c^{n+1}\in \mathbb R^{2N_L}$ the following least-squares system
\begin{align}\label{LSQ_varmob_BDF1}
\begin{pmatrix}
\bm\Phi & -\Gamma\Delta t\,\bm\Phi_{\Delta}\\[2pt]
\varepsilon^2\bm\Phi_{\Delta}-\kappa\bm\Phi & \bm\Phi\\
\bm\Phi_{\mathrm{bd}} & \bm 0\\
\bm 0 & \bm\Phi_{\mathrm{bd}}
\end{pmatrix}\bm c^{n+1}
=\begin{pmatrix}
\bm\Phi\bm\alpha^n+\Delta t\,\bm g_{\mathrm{var}}^{n}\\[2pt]
f_\kappa(\bm\Phi\bm\alpha^n)\\
\bm 0\\
\bm 0
\end{pmatrix},
\end{align}
where $\bm g_{\mathrm{var}}^{n}\in\mathbb R^{K_{\mathrm{int}}}$ collects the values of $\nabla\!\cdot\!\bigl[(M(u_{\mathrm{NN}}^n)-\Gamma)\,\nabla\mu_{\mathrm{NN}}^{\star,n}\bigr]$ at the interior collocation points. For the GTransNet-BDF2 scheme~\eqref{GBDF2_varmob}, the least-squares system reads
\begin{align}\label{LSQ_varmob}
\begin{pmatrix}
\frac{3}{2}\bm\Phi & -\Gamma\Delta t\,\bm\Phi_{\Delta}\\[2pt]
\varepsilon^2\bm\Phi_{\Delta}-\kappa\bm\Phi & \bm\Phi\\
\bm\Phi_{\mathrm{bd}} & \bm 0\\
\bm 0 & \bm\Phi_{\mathrm{bd}}
\end{pmatrix}\bm c^{n+1}
=\begin{pmatrix}
2\bm\Phi\bm\alpha^n-\tfrac{1}{2}\bm\Phi\bm\alpha^{n-1}+\Delta t\,\tilde{\bm g}_{\mathrm{var}}^{n+1}\\[2pt]
f_\kappa(2\bm\Phi\bm\alpha^n-\bm\Phi\bm\alpha^{n-1})\\
\bm 0\\
\bm 0
\end{pmatrix},
\end{align}
where $\tilde{\bm g}_{\mathrm{var}}^{n+1}\in\mathbb R^{K_{\mathrm{int}}}$ collects the values of $\nabla\!\cdot\!\bigl[(M(\tilde u_{\mathrm{NN}}^{n+1})-\Gamma)\,\nabla\tilde\mu_{\mathrm{NN}}^{n+1}\bigr]$ at the interior collocation points. The systems~\eqref{LSQ_varmob_BDF1}-\eqref{LSQ_varmob} are solved exactly as in Section~\ref{subsec:LSQ}, where the QR factorizations are precomputed once and each time step requires only a matrix-vector product followed by back substitution.

\end{document}